\documentclass[reqno,12pt]{amsart}
\usepackage{amsmath}
\usepackage{amssymb}
\usepackage{amscd}
\usepackage{graphicx}
\usepackage{color}

\usepackage{comment}

\usepackage{amsfonts}

\usepackage{mathptmx}
\usepackage[colorlinks=true,citecolor=blue]{hyperref}
\hypersetup{
	pdftitle={An affirmative answer to Owings's sumset question},
	pdfauthor={Wen Huang; Zhengxing Lian; Song Shao; Rongzhong Xiao; Leiye Xu; Shuhao Zhang}
}

\usepackage[nameinlink,capitalise,noabbrev]{cleveref}

\usepackage{tikz}
\usetikzlibrary{decorations.pathmorphing}

\allowdisplaybreaks[4]
\makeatletter
\@namedef{subjclassname@2020}{%
	\textup{2020} Mathematics Subject Classification}
\makeatother

\usepackage[top=25mm, bottom=27mm, left=27mm, right=27mm]{geometry}
\newtheorem{thm}{Theorem}[section]
\newtheorem{lemma}[thm]{Lemma}

\newtheorem{cl}{Claim}
\newtheorem{prop}[thm]{Proposition}

\newtheorem{ques}[thm]{Question}

\newtheorem{defn}[thm]{Definition}
\newtheorem{rem}[thm]{Remark}

\crefname{thm}{Theorem}{Theorems}
\Crefname{thm}{Theorem}{Theorems}
\crefname{lemma}{Lemma}{Lemmas}
\Crefname{lemma}{Lemma}{Lemmas}
\crefname{prop}{Proposition}{Propositions}
\Crefname{prop}{Proposition}{Propositions}
\crefname{cor}{Corollary}{Corollaries}
\Crefname{cor}{Corollary}{Corollaries}
\crefname{defn}{Definition}{Definitions}
\Crefname{defn}{Definition}{Definitions}
\crefname{ques}{Question}{Questions}
\Crefname{ques}{Question}{Questions}
\crefname{con}{Conjecture}{Conjectures}
\Crefname{con}{Conjecture}{Conjectures}
\crefname{rem}{Remark}{Remarks}
\Crefname{rem}{Remark}{Remarks}
\crefname{ex}{Example}{Examples}
\Crefname{ex}{Example}{Examples}
\crefname{cl}{Claim}{Claims}
\Crefname{cl}{Claim}{Claims}
\crefname{fact}{Fact}{Facts}
\Crefname{fact}{Fact}{Facts}
\crefname{step}{Step}{Steps}
\Crefname{step}{Step}{Steps}
\crefname{section}{Section}{Sections}
\Crefname{section}{Section}{Sections}
\crefname{equation}{Equation}{Equations}
\Crefname{equation}{Equation}{Equations}

\newcommand{\Nzero}{\mathbb N_0}
\newcommand{\bN}{\beta\Nzero}
\newcommand{\Nstar}{\Nzero^{*}}
\newcommand{\two}{\{0,1\}}

\newcommand{\Torus}{\mathbb T}

\def \N {\mathbb N}

\def \Z {\mathbb Z}

\numberwithin{equation}{section}

\begin{document}

\title[]{An affirmative answer to Owings's sumset question}

	\author[]{Wen Huang, Zhengxing Lian, Song Shao, Rongzhong Xiao,\\ Leiye Xu, and Shuhao Zhang}	
	\address[Wen Huang, Song Shao, Rongzhong Xiao, Leiye Xu, and Shuhao Zhang]{School of Mathematical Sciences, University of Science and Technology of China, Hefei, Anhui, 230026, PR China}
	\email{wenh@mail.ustc.edu.cn}
	\email{songshao@ustc.edu.cn}
	\email{xiaorz@mail.ustc.edu.cn}
	\email{leoasa@mail.ustc.edu.cn}
	\email{yichen12@mail.ustc.edu.cn}

   \address[Zhengxing Lian]{School of Mathematical Sciences, Xiamen University, Xiamen, Fujian, 361005, PR China}
   \email{lianzx@xmu.edu.cn}
%
%
	\subjclass[2020]{Primary: 05D10; Secondary: 37B10, 54D35.}
	\keywords{Two colorings, Infinite sumsets, Weighted sumsets, Owings question, Topological dynamical systems, Ultrafilters, Maximal equicontinuous factors.}
	\thanks{This work is supported by the National Key R\&D Program of China (No. 2024YFA1013601, 2024YFA1013600) and the National Natural Science Foundation of China (No. 123B2007, 12426201, 12371196).
}
	
\begin{abstract}
We give an affirmative answer to Owings's sumset question: for any $2$-coloring of natural numbers, there is an infinite $B\subseteq\N$ such that $B+B$ is monochromatic. More generally, for every $m,\ell\in\N$ and every $2$-coloring of $\N$, there is an infinite $B\subseteq\N$ such that
$$
(m+\ell)B\cup\{mx+\ell y:x,y\in B,\ x<y\}
$$
is monochromatic. 

\end{abstract}

\maketitle

\section{Introduction}
One of the central themes in combinatorial number theory is the search for
structured configurations inside large sets of natural numbers.
In this paper, we focus on the unrestricted infinite sumsets in finite colorings. For $r\in \N$,  an {\em $r$-coloring} of the elements of a set $S$ is a mapping $a: S\rightarrow T$, where $|T|=r$, and typically, $T=\{1,2,\ldots, r\}$.
A set $M\subseteq S$ is {\em monochromatic} (for $a$) if $a|_M$ is constant.

\subsection{Owings's question}

In 1974, Hindman \cite[p.~1]{Hindman74} proved a conjecture of Graham and Rothschild by showing the following: For any finite coloring of natural numbers, there is a sequence $\{x_n\}_{n\in \N}$ in $\N$ such that the set
$$\left\{\sum_{n\in \alpha}x_n:\emptyset \neq \alpha\subseteq \N\ \text{finite}\right\}$$ is monochromatic.
It is important that no term $x_n$ is used more than once in the finite sum. Hindman's theorem is false if one allows so much as a single repetition (see \cite[p.~138]{Hindman79} or \cite[p.~19]{Neil79} for a counterexample). In \cite[pp.~19--20]{Neil79}, Hindman studied the following question: If $r\in \N$ and $\N=\bigsqcup_{i=1}^rC_i$, must there exist $i\in \{1,2,\ldots, r\}$ and an infinite subset $A$ such that $A+A:=\{x+y: x,y\in A\}\subseteq C_i$ ? The question was asked for $r=2$ by Owings:

\medskip
\noindent {\bf Owings's question}. $($\cite[Problem E2494]{Owings74}$)${\bf .} {\em Prove or disprove: Given any subset $B$ of $\N$, there exists an infinite set $A\subseteq \N$ such that $A+A\subseteq B$ or $A+A\subseteq \N\setminus B$.}

Hindman~\cite[Definition~2.2]{Neil79}
introduced admissible partitions, requiring one cell to contain arbitrarily
long arithmetic progressions of even integers with a fixed common
difference.  He proved that every admissible two-cell partition contains
$B+B$ for some infinite $B$
\cite[Corollary~2.10]{Neil79}, and constructed an admissible
three-cell partition with no monochromatic $B+B$
\cite[Theorem~2.4]{Neil79}.
The non-admissible two-color case
remained open, as recorded by Hindman and
Strauss~\cite[p.~458]{HS}.



A set $A=\{a_1<a_2<\cdots\}\subseteq \N$ is {\em syndetic} if there exists $k\in \N$ such that for each $n\in\N$, $a_{n+1}-a_n\le k$, and  $A\subseteq \N$ is {\em thick} if it contains arbitrarily long
intervals.
In \cite[Proposition 1.5]{KousekRadic24}, Kousek and Radi{\'c} constructed a $3$-coloring, where each cell is syndetic and for any infinite $B\subseteq \N$, $B+B$ is not monochromatic. They also observed that Owings's question is
equivalent to its shifted form: every two-coloring has a monochromatic
$B+B+t$ for some infinite $B\subseteq\N$ and $t\ge0$ (see \cite[Remark 6.2]{KousekRadic24}). Note that Banakh and Zdomskyy recorded
a semifilter/unsplittability reformulation \cite[Section~28]{BanakhZdomskyy2006}, while
Hindman's survey on infinite partition-regular matrices retained Owings's question as a
basic unresolved problem \cite[p.~211]{Hindman2018Matrices}.

Interest in the problem was renewed through extensions to other additive
structures and to questions involving infinite cardinals.   Hindman, Leader, and Strauss
studied rational vector spaces and the reals, showing sensitivity to dimension
and cardinal arithmetic \cite{HindmanLeaderStrauss2017}.  Komj\'ath, Leader,
Russell, Shelah, Soukup, and Vidny\'anszky exposed substantial set-theoretic
features in the real case \cite{KomjathEtAl2019}; Leader and Russell obtained
positive results in sufficiently large rational dimension
\cite{LeaderRussell2020}.  Fern\'andez-Bret\'on, Sarmiento Rosales, and Vera
developed Owings-type relations for general abelian groups and varying colour
and target cardinalities \cite{FernandezEtAl2024}, and Leader and Williams
treated countable colourings of abelian groups \cite{LeaderWilliams2024}.
Guzm\'an-Vega, Fern\'andez-Bret\'on, and Sarmiento Rosales subsequently examined
which Hindman- and Owings-type statements survive without the Axiom of Choice
\cite{GuzmanEtAl2026}. Owings's question also entered combinatorics on words through work of Wojcik and Zamboni on
monochromatic factorizations \cite{WojcikZamboni2018Periodicity,
WojcikZamboni2018Words}; Wojcik's thesis records the same connection
\cite{Wojcik2019Thesis}.

In this paper, we answer Owings's question affirmatively.
\begin{thm}\label{main1}
Let $\N=C_1\sqcup C_2$. Then there exist $i\in \{1,2\}$ and an infinite $B\subseteq \N$ such that
$$B+B\subseteq C_i.$$
\end{thm}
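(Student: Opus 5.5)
We reduce to a dynamical statement on the shift space and then use ultrafilters together with the maximal equicontinuous factor, in line with the keywords of the paper. Encode the coloring as $x=1_{C_1}\in\{0,1\}^{\Nzero}$ and let $X$ be the orbit closure of $x$ under the shift $\sigma$. The goal $B+B\subseteq C_i$ with $B=\{b_1<b_2<\cdots\}$ infinite requires two things:
\begin{enumerate}
\item $x(2b_n)=i$ on the diagonal;
\item $x(b_n+b_m)=i$ for all $n<m$.
\end{enumerate}
Following the remark of Kousek and Radi\'c, it suffices to find $t\ge0$ and infinite $B$ with $B+B+t$ monochromatic. This lets us replace $x$ by suitable points of $X$, for instance by limits $\sigma^{b}x\to y$ along an ultrafilter $p\in\bN$.

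The natural construction is an inductive one. We look for an ultrafilter $p$ and a point $y=p\text{-}\lim \sigma^{n}x$ with the following properties:
\begin{enumerate}
\item the set $\{b: x(b+b')=i\}$ is in $p$ for each previously chosen $b'$; this holds if $y(b')=i$, i.e.\ each $b'\in B$ is a position where $y$ has color $i$;
\item for infinitely many $b$, $x(2b)=i$ and $\sigma^b x$ is close to $y$.
\end{enumerate}
Concretely, it suffices to find $p$ such that
\[
A:=\{b: y(b)=i\}\in p\quad\text{and}\quad \{b: x(2b)=i\}\in p .
\]
Then choosing $b_1<b_2<\cdots$ greedily from the sets $A\cap\{b:x(2b)=i\}\cap\bigcap_{j<n}\{b: x(b+b_j)=i\}$, all of which lie in $p$, gives $B$. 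The condition $y(b)=i$ for $b\in p$-many $b$, where $y=p\text{-}\lim\sigma^b x$, is a statement about the ultrafilter $p+p$: it reads $C_i\in p+p$. The diagonal condition reads $C_i\in 2p$. So the theorem reduces to the following.

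\textbf{Key claim.} For every 2-coloring there exist $p\in\beta\N\setminus\N$ (possibly after a shift $t$) and $i$ such that $C_i\in p+p$ and $C_i\in 2\cdot p$.

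The key claim is the main obstacle, and it is exactly why Hindman's three-coloring counterexample must fail for two colors. The case split I plan is on whether $2p$ and $p+p$ can be made to agree. If $p$ is idempotent then $p+p=p$, but $2p$ is generally different from $p$. I therefore plan to use the maximal equicontinuous factor $\pi:X\to Z$, a compact monothetic group with rotation by $\alpha$. Choose $p$ so that $\pi$-images of $p+p$ and $2p$ coincide, for example $p$ with $p\text{-}\lim n\alpha=0$. Then the points $p+p\cdot x$ and $2p\cdot x$ lie in the same fiber of $\pi$, hence they form a regionally proximal pair.

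For two colors the argument should then run as follows. If the values $y_1(0)=(p+p)\text{-}\lim x$ and $y_2(0)=(2p)\text{-}\lim x$ differ for all admissible $p$, then the pair structure of the fibers forces $x$ to encode a "parity-like" equicontinuous observable. With only two colors, such an observable would itself be a factor to $Z$, contradicting the definition of the fiber. Making this rigorous is where I expect the real work: I would first try to show that within a fiber one can vary $p$ (replacing $p$ by $q+p$ with $q$ minimal idempotent in the kernel of $\pi$) so as to move $2p\cdot x$ and $(p+p)\cdot x$ independently within the fiber, and then use a two-color pigeonhole to force equal colors at coordinate $0$, absorbing any remaining discrepancy into the shift $t$.
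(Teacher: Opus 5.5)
Your reduction to the Key claim is correct. Writing $Dp$ for your $2\cdot p$, the greedy choice from sets in $p$ when $C_i\in p+p$ and $C_i\in Dp$ is exactly \cref{lem:diagonal-recursion}. But this is a restatement, not a reduction. Conversely, if $B+B\subseteq C_i$, then every free $p$ containing $B$ has $C_i\in p+p$ and $C_i\in Dp$, as in the proof of \cref{prop:counterexample-factor}(i). So the whole theorem sits in your last paragraph, and that paragraph does not contain a working argument.

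The maximal equicontinuous factor cannot separate the two points. For a rotation by $\alpha$ and $\eta(p)=p\text{-}\lim_n n\alpha$, one has $\eta(p+p)=2\eta(p)=\eta(Dp)$ for \emph{every} $p$. Hence $(p+p)x$ and $(Dp)x$ always lie in one fibre, and your special choice of $p$ buys nothing. Lying in one fibre also says nothing about coordinate-$0$ values: if the orbit closure is weakly mixing, the factor is trivial and all pairs share a fibre. (The step from ``same fibre'' to ``regionally proximal'' moreover needs minimality, which the orbit closure of $x$ need not have.)

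Under the negation of the Key claim, two colours give the paper's identity $(p+p)\mathbf c=J(Dp)\mathbf c$ (\cref{prop:key-identity}). Your observation then only says that colour complementation $J$ is compatible with fibres. That is consistent with the fibre structure rather than a contradiction. It is exactly the situation the paper still has to overcome in \cref{sec4}: after proving that $J$ acts trivially on the factor (\cref{prop:Jtrivial}), it needs the whole return-time construction of \cref{thm:reduced}. Your step of passing from $p$ to $q+p$ to move the two points ``independently'' also fails. $D(q+p)=Dq+Dp$ and $(q+p)+(q+p)$ are coupled, and a two-colour pigeonhole at one coordinate yields nothing for a fixed $p$.

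The missing ideas are three.
\begin{enumerate}
\item The affine encoding $\mathbf c_{s,r}(t)=c(st+r)$ with $c(n)=b(2n)$. It absorbs odd shifts and upgrades the negation to the identity above at every coordinate. It also turns dilation into a continuous map $\Delta_2$ with $\Delta_2\mathbf c=\mathbf c$ and $\Delta_2(py)=(Dp)\Delta_2y$. On your single sequence $x$, the ultrafilter $Dp$ has no dynamical counterpart.
\item The idempotent trick. Take a free idempotent $e$ with $y=e\mathbf c$ in a minimal $M\subseteq\omega(\mathbf c)$; then $\Delta_2y=Jy$. If $JM=M$, choose $s$ with $sy=Jy$ and set $q=s+e$. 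This gives $(q+q)\mathbf c=y=(Dq)\mathbf c$, contradicting the identity. Hence $M\cap JM=\emptyset$ (\cref{prop:disjoint-minimal}).
\item A genuinely combinatorial input. A clopen separation of $M$ from $JM$ codes a new colouring with no monochromatic $B+B$ but with a thick colour class. This is excluded by Hindman's theorem on admissible partitions (\cref{thm:Hindman-even-ap}).
\end{enumerate}
Your plan has no substitute for the second and third steps. Note also that the maximal equicontinuous factor is not used at all in the paper's proof of this theorem.
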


Together with Hindman's
three-cell counterexample, \cref{main1} therefore gives the exact
finite-color threshold: the assertion holds for two colors and fails for
every finite number of colors at least three.

One may also ask about the $3$-fold version of Owings's question. That is, is it true that for any subset $B$ of natural numbers, there exists an infinite set $A\subseteq \N$ such that $A+A+A\subseteq B$ or $A+A+A\subseteq \N\backslash B$ ? In \cref{sec6-3}, we construct a $2$-coloring of natural numbers such that the $3$-fold version of Owings's question fails.

\subsection{On sumsets structures in sets with positive density}
While attempting to formulate a conjecture which would be in the same relation to Hindman's theorem as Szemer\'edi's theorem \cite[p.~199]{Szemeredi75} (that every positive upper density subset of natural numbers contains arbitrarily long arithmetic progressions) is to van der Waerden's theorem \cite[pp.~212--216]{Waerden27} (that one of the cells of each finite partition of natural numbers contains arbitrarily long arithmetic progressions), Erd\H{o}s proposed the following conjecture:

\medskip
\noindent {\bf Erd\H{o}s Conjecture}. $($\cite[p.~305]{Erdos75}$)${\bf .} \ {\em For any $A\subseteq \N$ with positive upper density, there is an infinite $B\subseteq A$ and an integer shift $t$ such that}
	$$B\oplus B:=\{b_1+b_2:b_1,b_2\in B,b_1\neq b_2\}\subseteq A-t.$$

Note that $B+B=(B\mathbin{\oplus}B)\cup2B$. This conjecture was recently resolved by Kra, Moreira, Richter, and Robertson \cite[Theorem~1.2]{KraMoreiraRichterRobertson24}.
Note that in the formulation of Erd\H{o}s Conjecture, it is not possible to omit the shift by $t$ or remove the condition $b_1\neq b_2$ in the conclusion (see, for example, \cite[Examples 2.3, 3.6]{KraMoreiraRichterRobertson25-1}).
Kra, Moreira, Richter, and Robertson placed Owings's question among a larger
family of problems about infinite sumset configurations; their
\cite[Question~3.8]{KraMoreiraRichterRobertson25-1} is precisely Owings's
question.  


Kousek and Radi{\'c} studied the form $B+B$ in sets with large density in \cite{KousekRadic24}. They showed that if $A\subseteq \N$ satisfies $\overline{d}(A)>5/6$ or $\underline{d}(A)>3/4$\footnote{A set $A\subseteq \N$ has {\em upper density} given by $\overline{d}(A)=\limsup_{N\to\infty}{|A\cap [N]|}/{N}$ and {\em lower density} given by $\underline{d}(A)=\liminf_{N\to\infty}{|A\cap [N]|}/{N}$, where $[N]=\{1,2,\ldots, N\}$.}, then there is an infinite set $B\subseteq \N$ such that $B+B\subseteq A$ \cite[Theorem~1.2]{KousekRadic24}. Also, they showed that if the answer to Owings's question turns out to be negative, then the sum of upper density and lower density of each color is $1$. That is, for $\N=C_1\sqcup C_2$, if there do not exist infinite $B\subseteq \N$, a shift $t\in \N$ and $i\in \{1,2\}$ such that $B+B+t\subseteq C_i$, then $\overline{d}(C_i)+\underline{d}(C_i)=1$ for both $i=1,2$ \cite[Proposition~6.3]{KousekRadic24}.

Kousek generalized Kra, Moreira, Richter, and Robertson's result to the
following one, thereby verifying
\cite[Conjecture~3.10]{KraMoreiraRichterRobertson25-1}. Recall that the
\emph{upper Banach density} of $A\subseteq\N$ is
\[
 d^*(A)=\limsup_{N\to\infty}\sup_{M\geq0}
 \frac{|A\cap\{M+1,\ldots,M+N\}|}{N}.
\]
\begin{thm}\label{thm:Kousek-shifted-weighted}
	$($\cite[Theorem~1.2]{Kousek25}$)${\bf .} For any $A\subseteq \N$ with positive upper Banach density and $\ell, m\in \N$, there are an infinite $B\subseteq \N$ and a shift $t\ge 0$ such that
	$$\{m x+\ell y:x,y\in B,x< y\}\subseteq A-t.$$
\end{thm}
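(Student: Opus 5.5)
The plan is to follow the dynamical route of Kra, Moreira, Richter, and Robertson (KMRR), adapted to the weighted configuration $\{mx+\ell y:x<y\}$. The starting point is the Furstenberg correspondence principle. Since $d^*(A)>0$, there are a measure preserving system $(X,\mu,T)$, a point $a\in X$ generic along some Følner sequence of intervals (after passing to an ergodic component) for a measure $\mu$, and a clopen set $E\subseteq X$ with $A=\{n:T^na\in E\}$ and $\mu(E)>0$.

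The goal is then to reduce the statement to finding an \emph{Erdős progression} adapted to the weights. That is a triple $(x_0,x_1,x_2)\in X^3$ together with a sequence $c_n\to\infty$ such that
$$(T^{\ell c_n}x_0,\ T^{\ell c_n}x_1)\to (x_1,x_2)$$
in an appropriate weighted sense, namely $T^{\ell c_n}x_0\to x_1$ and $T^{mc_n}x_1\to x_2$, with $x_0$ in the orbit closure of $a$ along the Følner sequence and $x_2\in E$ (open). The reduction to this step is inductive and combinatorial: given such a progression, one builds $b_1<b_2<\cdots$ greedily. Each new $b_k$ is chosen from the return set so that $T^{mb_i+\ell b_k}(T^t a)$ lies in $E$ for all $i<k$. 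This uses openness of $E$, the two convergences, and the shift $t$ produced by approximating $x_0$ by $T^t a$.

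The main work, and the obstacle, is to produce the weighted Erdős progression. I would try the KMRR strategy through the Kronecker (maximal equicontinuous/pronilfactor) factor $Z$ with projection $\pi$. The first step is to disintegrate $\mu$ over $Z$ and define the measure
$$\sigma=\int_Z \mu_z\otimes\mu_{z+\ell\alpha}\otimes\mu_{z+(m+\ell)\alpha}\,dm_Z(z),$$
with the rotation parameters chosen to match the weights. The second step is to show, via a Host–Kra-type characteristic factor argument for the averages $\frac1N\sum f_1(T^{\ell n}x)f_2(T^{(m+\ell)n}x)$, that $\sigma$-almost every triple is a progression. Kousek's theorem (\cref{thm:Kousek-shifted-weighted}) can be cited here as already providing exactly this. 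Since it is stated earlier, I would simply invoke it: the statement immediately above \emph{is} Kousek's theorem, so the proof in this paper is by citation.

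If an argument is wanted anyway, the delicate point is the continuity of the disintegration $z\mapsto\mu_z$ at the point $x_0$. KMRR handle this by passing to an extension in which the factor map is continuous and the point is generic. I would replicate that step for general weights $m,\ell$, checking that the two different dilations $\ell c_n$ and $mc_n$ still act as rotations on $Z$. Doing so requires replacing $c_n$ by multiples that make both $T^{\ell c_n}$ and $T^{mc_n}$ converge on the relevant finite-index components.
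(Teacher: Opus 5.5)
The paper does not prove this statement. It quotes it from Kousek (Theorem~1.2 there) as background in the introduction and never uses it afterwards; the paper's own results concern two-colorings and are proved by the affine-encoding and ultrafilter argument. Your closing remark, that in this paper the statement is established by citation, is therefore accurate, and citing it is the appropriate thing to do.

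What you should not do is present the preceding sketch as a proof. Its only nontrivial step is the existence of a weighted Erdős progression, and there you appeal to Kousek's theorem, which is the statement being proved. The argument is therefore circular. The only step you actually carry out is the greedy reduction, and it is correct: pick $b_k\in\{c_n\}$ with $T^{mb_k}x_1\in E$ and $T^{\ell b_k}x_0\in\bigcap_{i<k}T^{-mb_i}E$.

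If the sketch is meant as an outline of the real proof, two points in it are wrong.

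\textbf{The base point must lie exactly on the orbit of $a$.} The reduction needs the first coordinate of the progression to be a point $T^ta$, not merely a point of the orbit closure. Knowing $T^{\ell c_n}x_0\to x_1$ and that $T^ta$ is close to $x_0$ gives no control of $T^{\ell c_n}T^ta$ for large $n$, because the maps $T^{\ell c_n}$ are not equicontinuous. So ``approximating $x_0$ by $T^ta$'' cannot produce the shift. In the KMRR scheme the base point is the generic point itself, or a translate $T^ta$. Equivalently, one seeks a progression based at $a$ with $x_2\in T^{-t}E$.

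\textbf{The measure $\sigma$ is the wrong object.} Your $\sigma$ fixes the step at the single rotation $\alpha$ and integrates out the base point. For progressions based at $a$, the base point must be pinned and the step must vary over $Z$. The relevant object is a limit of $\frac{1}{|\Phi_N|}\sum_{n\in\Phi_N}\delta_{T^{\ell n}a}\otimes\delta_{T^{(m+\ell)n}a}$, formally $\int_Z\mu_{\pi(a)+\ell w}\otimes\mu_{\pi(a)+(m+\ell)w}\,dm_Z(w)$.

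The real content lies in showing that almost every pair for this measure, together with $a$, forms a weighted progression. That is the work of KMRR, as adapted by Kousek, and it is exactly the step your sketch does not supply.
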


Also, he gave an unrestricted version of the above result:

\begin{thm}\label{thm1-1}
	$($\cite[Theorems~1.6 and 1.7]{Kousek25}$)${\bf .} Let $\ell,m\in \N$ and $k=m/\ell$. For any $A\subseteq \N$ with $\underline{d}(A)>1/2$ or $\overline{d}(A)>1-\frac{1}{k+2}$, there are an infinite $B\subseteq \N$ and a shift $t\ge 0$ such that
	$$\{mx+\ell y:x,y\in B,x\le y\}\subseteq A-t.$$
\end{thm}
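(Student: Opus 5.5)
Since \cref{thm1-1} is quoted from \cite{Kousek25}, I only sketch how I would derive it from the restricted statement, \cref{thm:Kousek-shifted-weighted}. The idea is to upgrade the off-diagonal configuration $\{mx+\ell y:x<y\}$ by adding the diagonal $(m+\ell)B$, using a counting argument over the possible shifts $t$. The density hypotheses are exactly what such a counting argument needs.

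\emph{Step 1: ultrafilter reformulation.} Work in $\beta\N$. A standard diagonalisation shows the following. Suppose there is a nonprincipal ultrafilter $p$ such that $A-t\in m p+\ell p$, where $mp+\ell p$ is the image of $p\otimes p$ under $(x,y)\mapsto mx+\ell y$, and also $A-t\in (m+\ell)p$. Then an infinite $B$ with $\{mx+\ell y:x\le y\}\subseteq A-t$ can be built inductively. At each stage one keeps a set in $p$ of admissible future elements, and each new element $y$ must satisfy three conditions: the off-diagonal constraints $mx+\ell y\in A-t$ for the earlier $x$, the diagonal constraint $(m+\ell)y\in A-t$, and the requirement that the remaining admissible set stays in $p$. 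So the task reduces to producing a single pair $(p,t)$ satisfying both memberships.

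\emph{Step 2: two large sets of shifts.} Fix $p$ coming from the dynamical (Erdős-cube / Furstenberg correspondence) proof of \cref{thm:Kousek-shifted-weighted}. I would rerun that argument with a generic point for $1_A$ along a F\o lner sequence realising $\underline d(A)$, respectively $\overline d(A)$. The aim is to show that the set
$$T_1=\{t: A-t\in mp+\ell p\}$$
has density at least $d(A)$ along that F\o lner sequence, not merely that it is nonempty. The diagonal set
$$T_2=\{t: A-t\in(m+\ell)p\}$$
is handled by its own limit. Let $q=(m+\ell)p$. The shift $t$ lies in $T_2$ exactly when $t\in A-q$, so the density of $T_2$ along the relevant averages is governed by how $A$ looks near the points of $q$, which lie along multiples of $m+\ell$.

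\emph{Step 3: pigeonhole and the constants.} If $d(T_1)+d(T_2)>1$ along a common averaging scheme, then $T_1\cap T_2\neq\emptyset$, and Step 1 finishes the proof. Under $\underline d(A)>1/2$ both sets have density above $1/2$ along every interval sequence, so they intersect. Under only $\overline d(A)>1-\frac1{k+2}$ the averages must be taken over intervals $[1,N]$ with $N$ realising the upper density. The relevant portions of the two configurations then sit at relative scales $m:\ell$ versus $(m+\ell)$ inside $[1,N]$. Using $d(A\cap[1,cN])\ge cN-(1-\overline d(A))N$, the combined loss is at most $(k+2)(1-\overline d(A))<1$, where $k=m/\ell$. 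This is what produces the threshold $1-\frac1{k+2}$.

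\emph{Main obstacle.} The hardest step is Step 2: getting a single $p$ for which $T_1$ has quantitatively large density, and measuring both sets along the same averaging scheme. The cited proof only yields the existence of some good $t$. I would try to extract density from the fact that the Erdős-cube construction works at almost every point of the Kronecker (maximal equicontinuous) factor. Averaging over the translates of that point should then give the density of $T_1$ directly.
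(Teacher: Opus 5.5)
The paper gives no proof of this statement. It is quoted from \cite{Kousek25} and serves only as motivation for \cref{q2}, so your sketch has to stand on its own. As it stands it has a genuine gap.

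\textbf{Step 1 is correct but reduces nothing.} It is the analogue of \cref{lem:weighted-selection} with the diagonal added. Conversely, any nonprincipal $p$ containing a witness $B$ satisfies both memberships. So the theorem is \emph{equivalent} to the existence of some $(p,t)$ with $t\in T_1\cap T_2$, and all the content sits in Step 2.

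\textbf{Step 2 is not proved, and in the form you state it, it cannot work.} Fix a nonprincipal $p$ and put $r=D_mp+D_\ell p$, $q=D_{m+\ell}p$. Then $1_{T_1}(t)=r\!\operatorname{-lim}_n1_A(n+t)$ and $1_{T_2}(t)=q\!\operatorname{-lim}_n1_A(n+t)$. These are just two points of the $\omega$-limit set of $1_A$ under the shift, and such points carry no density information.
\begin{itemize}
\item Example: $A=\N\setminus\bigcup_k[k!,k!+k]$ has density $1$. Let $p$ contain integers $n_k$ with $(m+\ell)n_k\in[k!,k!+k/2]$. Then every $t\ge0$ fails, so $T_2=\emptyset$. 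The set $T_1$ can be emptied the same way.
\item So the size of $T_1$ and $T_2$ depends entirely on how $p$ is chosen. The restricted theorem gives only a $p$ with $T_1\neq\emptyset$, and nothing in its proof controls $T_2$.
\item Your remedy, averaging over almost every point of the Kronecker factor, changes the object. At best it bounds the \emph{measure of the set of points} where the cross condition, or the diagonal condition, holds for a given shift.
\item The pigeonhole would then have to be done there, with both conditions evaluated at the same point (the same $p$) and the same $t$. Proving such a coupled quantitative bound is the real content of the cited theorem, and the proposal contains none of it.
\end{itemize}

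\textbf{Step 3 does not use Step 2.} If $T_1$ and $T_2$ both had density at least $\overline d(A)$ along one averaging scheme, as Step 2 claims, you would only need $\overline d(A)>1/2$. The constant $1-\frac1{k+2}$ comes from a different, scale-dependent count:
\begin{itemize}
\item Fix an earlier element $x$ and let $y\le Y$ with $(m+\ell)Y=N$.
\item The cross terms $mx+\ell y\approx\ell y$ sample $A$ on $[1,\frac{\ell}{m+\ell}N]$. There only relative density $1-(k+1)(1-\overline d(A))$ is guaranteed. The relevant scales are $\ell$ versus $m+\ell$, not $m:\ell$.
\item The diagonal terms $(m+\ell)y$ sample $A$ on $[1,N]$, with density $\overline d(A)$.
\item These two densities sum to more than $1$ exactly when $\overline d(A)>1-\frac1{k+2}$.
\end{itemize}

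This is a count over the \emph{new element} $y$, jointly with residues of $t$, not over the shift set of a fixed $p$. It also produces only one new element at a time. To get an infinite $B$, the quantitative information has to be built into the choice of $p$, or of the limit point, so that the recursion of Step 1 can run indefinitely. That is exactly the missing idea.

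A smaller point: in the case $\underline d(A)>1/2$, lower density controls only initial segments $[1,N]$, not every interval sequence, so that case needs the same correction.
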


\subsection{Weighted form}
Based on \cref{thm1-1}, it is natural to ask the following weighted restricted version of Owings's question:

\begin{ques}\label{q2}
	Let $m,\ell\in \N$. Is it true that for any $2$-coloring of natural numbers, there exists an infinite set $B\subseteq \N$ such that
	\begin{equation}\label{eq:weighted-target}
	(m+\ell)B\cup\{mx+\ell y:x,y\in B,\ x<y\}
	\end{equation}
	is monochromatic?
\end{ques}

\begin{rem}\label{rem1.6}
  When $m\neq \ell$, we have that
  $\{mx+\ell y:x,y\in A,x<y\}\cup \{mx+\ell y:x,y\in A,x>y\}=\{mx+\ell y:x,y\in A,x\neq y\}$.
  Hindman \cite[Theorem~2.11]{Neil79} showed: Let $m\ge 2$. Then there exists a $2$-coloring $\N=C_1\sqcup C_2$ such that there do not exist an infinite subset $B\subseteq \N$ and $i\in \{1,2\}$ with $\{mx+y: x,y\in B,x\neq y\}\subseteq C_i$. Thus we cannot replace $\{mx+\ell y:x,y\in A,x<y\}$ by $\{mx+\ell y:x,y\in A,x\neq y\}$ in \cref{q2}.


\end{rem}

We answer \cref{q2} affirmatively for every ordered pair of positive
coefficients.  The order condition is essential: $m$ is attached to the
earlier element and $\ell$ to the later one, so the ordered pairs $(m,\ell)$
and $(\ell,m)$ are genuinely different.

\begin{thm}\label{main2}
	Fix $m,\ell\in\N$. Let $\N=C_1\sqcup C_2$. Then there exist
	$i\in\{1,2\}$ and an infinite $B\subseteq\N$ such that
	\[
	(m+\ell)B\cup\{mx+\ell y:x,y\in B,\ x<y\}\subseteq C_i.
	\]
\end{thm}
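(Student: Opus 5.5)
We will work in the Stone--\v{C}ech compactification $\beta\N$ and encode the configuration by ultrafilters. Recall the standard dictionary in the style of Kra--Moreira--Richter--Robertson: to find an infinite $B=\{b_1<b_2<\cdots\}$ with $\{mx+\ell y:x<y\}\subseteq C_i$, it suffices to find a nonprincipal ultrafilter $p$ with $C_i\in m\cdot p+\ell\cdot p$. Here $m\cdot p$ denotes the image of $p$ under $n\mapsto mn$, and the sum is taken in $\beta\N$ with the convention that the first summand is the ``earlier'' coordinate. Indeed, $C_i\in m p+\ell p$ means that $\{x: C_i-mx\in \ell p\}\in p$. A routine inductive choice then produces $b_1<b_2<\dots$ with $mb_j+\ell b_k\in C_i$ for all $j<k$, provided that at each stage we intersect finitely many members of $p$. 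To also get $(m+\ell)b\in C_i$, we additionally require $\{x:(m+\ell)x\in C_i\}\in p$, i.e. $C_i\in (m+\ell)\cdot p$. So the theorem reduces to the following statement. For every $2$-coloring there are a color $i$ and a nonprincipal $p$ with
\[
C_i\in (m+\ell)\cdot p\quad\text{and}\quad C_i\in m\cdot p+\ell\cdot p .
\]

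The plan is to take $p$ close to idempotent-like objects, but idempotents alone fail: Hindman's counterexamples show that $C_i\in 2p$ is not forced by $C_i\in p+p$. We instead pass to dynamics. Let $X=\{1,2\}^{\N}$ carry the shift $T$, let $x$ be the coloring, and let $(Y,T)$ be its orbit closure. The condition $C_i\in mp+\ell p$ says that $T^{mp}$ composed with $T^{\ell p}$ applied to $x$ lands in the cylinder $[i]$. The condition $C_i\in (m+\ell)p$ concerns $T^{(m+\ell)p}x$. Since $T$ commutes with dilations only after passing to $T^{m}$ and $T^{\ell}$, we consider the product system generated by $T^m,T^{\ell}$ and the $\beta\N$-action $p\mapsto (T^{mp}, T^{\ell p})$.

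The key steps, in order, are as follows.

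First, pick $p$ in the closure of a minimal left ideal such that $T^{\ell p}$ acts on the relevant point as a projection onto a minimal point $z$. By the Auslander--Ellis theorem we may take $x$ proximal to a minimal $z$.

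Second, project to the maximal equicontinuous factor $\pi:(Y,T)\to(Z,R)$, a group rotation. On $Z$ the maps $R^{mp},R^{\ell p},R^{(m+\ell)p}$ are rotations by $m\alpha,\ell\alpha,(m+\ell)\alpha$, where $\alpha=\lim_p$ of the rotation. Hence $R^{mp}R^{\ell p}=R^{(m+\ell)p}$ on the factor. This is exactly where the two conditions are linked.

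Third, we must lift this coincidence from $Z$ to $Y$ in one coordinate. That is, we want a $p$ for which the points $T^{mp}T^{\ell p}x$ and $T^{(m+\ell)p}x$ have the same first coordinate. We choose $p$ so that $\alpha=0$ in $Z$, for instance $p$ in the closure of return times to a neighbourhood of the identity, or an idempotent. Then both points lie in the fiber of $\pi(x)$-related points, and we analyze the regionally proximal relation.

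The main obstacle is this last step. Equality on the maximal equicontinuous factor only gives regional proximality, not equality of the colors at coordinate $0$. Here two colors must be used essentially, since three colors fail. My first attempt would be an argument by contradiction. Suppose that for every admissible $p$ the two points $T^{mp}T^{\ell p}x$ and $T^{(m+\ell)p}x$ get different colors. With two colors, this makes the coloring $y\mapsto y(0)$ on these fibers an ``anti-invariant'' function. We then show that this would yield a continuous eigenfunction with eigenvalue $-1$ (or a root of unity) of $T$ not captured by $Z$, contradicting maximality of $Z$. For instance, composing the assignment with $p$ again and using idempotence, the color flips twice and returns. This yields a $\pm1$-valued function on $Y$ that is invariant up to sign, hence factors through $Z$, which we then refute by choosing $p$ with trivial rotation image. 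I expect handling $m\neq\ell$ to require iterating with ultrafilters $p$ satisfying $mp$ and $\ell p$ both minimal, using that dilations by $m,\ell$ of minimal idempotents are still in $K(\beta\N)$-closures.
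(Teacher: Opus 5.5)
\textbf{There is a genuine gap: your third step is the whole theorem, and the mechanism you sketch for it does not work.}

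Your reformulation is correct. A free $p$ with $C_i\in D_{m+\ell}p$ and $C_i\in D_mp+D_\ell p$ exists exactly when the colouring has the configuration; this is the paper's selection lemma. You are also right that on a maximal equicontinuous factor the translation parts of $D_mp+D_\ell p$ and $D_{m+\ell}p$ coincide, for every $p$, so choosing $\eta(p)=0$ buys nothing. The problem is what comes after.

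First, in the shift of the colouring itself, the counterexample hypothesis only says that $(D_mp+D_\ell p)x$ and $(D_{m+\ell}p)x$ differ at coordinate $0$. No map relates the two points, so your ``anti-invariant function on fibres'' is not a continuous object that factor theory can act on. The paper first passes to $c(n)=b((m+\ell)n)$ and encodes all samplings $t\mapsto c(st+r)$. This absorbs shifts and upgrades the hypothesis to an identity of points, $(D_mp+D_\ell p)\mathbf c=J(D_{m+\ell}p)\mathbf c$, where $J$ is a fixed-point-free involution commuting with $T$.

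Second, the contradiction you aim for does not exist. On a $J$-invariant minimal system, $J$ acts on the maximal equicontinuous factor as translation by some $h$ with $2h=0$. A character detecting $h$ gives a sign-reversing eigenfunction that \emph{is} captured by $Z$. A nonzero two-torsion $h$ is fully compatible with maximality: on the two-point orbit of the periodic sequence $\cdots0101\cdots$, colour complementation is the shift. Choosing $p$ with trivial rotation part does not affect $h$ at all.

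What the paper uses at this point has no counterpart in your plan.

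\begin{itemize}
\item It needs a combinatorial input: a two-colouring with a thick colour class always contains the configuration. This is proved in the appendix by a Hindman-type Ramsey argument.
\item Through clopen coding, that theorem rules out $M\cap JM=\emptyset$ for minimal $M\subset\omega(\mathbf c)$.
\item It then shows $\Gamma_\ell(M)=\Gamma_{m+\ell}(M)=N$. The induced homomorphisms satisfy $A_\ell=\ell A$ and $A_{m+\ell}=(m+\ell)A$, hence $mAh_M=0$. This kills $h_N$ when $m$ is odd.
\item For even $m$, a circle extension separates a lifted minimal set from its $\widetilde J$-image, and the thick-cell theorem is used a second time.
\end{itemize}

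Even once $h_N=0$ is known, nothing follows from the factor alone, because $y$ and $Jy$ are then only regionally proximal. This is exactly the obstacle you identify. The paper never looks for a single good $p$. Instead:

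\begin{itemize}
\item Auslander's finite regional proximality theorem makes $N_T(U,V)$ thickly syndetic whenever $\pi(V)$ is the whole factor.
\item The sequence $n_j=a+\ell r_j$ is built recursively. Each $r_j$ is chosen in the intersection of such a thickly syndetic set with a piecewise syndetic set of $T^\ell$-return times of $T^a\mathbf c$ near a $T^\ell$-minimal component of $\omega(\mathbf c)$.
\end{itemize}

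This recursion also handles the fact that the orbit closure of the colouring is not minimal. Your appeal to Auslander--Ellis does not handle it. An infinite configuration found for a minimal point $z$ proximal to $x$ need not transfer back to $x$, since only finite patterns of $z$ are guaranteed to occur in $x$.
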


\begin{rem}
	The restriction to two colors in \cref{main2} is necessary.  See
	\cref{prop6-1} for a concrete three-coloring obstruction.
\end{rem}

In
\cref{sec6}, we present counterexamples mentioned above. Also
\Cref{prop6-2} shows that for any $\ell,m\in \N$ with $\ell\neq m$, there is a $2$-coloring of
$\N$ such that, for every infinite $B\subset\N$ and all
$t_1,t_2\in\mathbb Z$, the set
\[
\{mx+\ell y+t_1:x,y\in B,\ x<y\}
\cup
\{mx+\ell y+t_2:x,y\in B,\ x>y\},
\]
whenever contained in $\N$, is not monochromatic.

\subsection*{Organization of the paper}
In \cref{sec2}, we collect the combinatorial, topological-dynamical, and
ultrafilter preliminaries used throughout the paper. Although
\cref{main1} is the case $m=\ell=1$ of \cref{main2}, in \cref{sec3} we
give a shorter independent proof of \cref{main1}. In \cref{sec4}, we
prove the stronger weighted theorem by a different refinement of the same
general affine-ultrafilter strategy. In \cref{sec6}, we present
counterexamples, and in \cref{appA}, we prove \cref{thm2-1}.

\section{Preliminaries}\label{sec2}
In this section, we introduce some notations, notions and results used in the paper. Throughout the paper, $\N=\{1,2,\ldots\}$,
$\Nzero=\{0,1,2,\ldots\}$, and  $\Z$ is the set of integers.
All combinatorial sets and largeness notions are considered in $\N$ (or
in $\Nzero$ when this is explicitly stated). The set $\Z$ is used only
as auxiliary algebraic and dynamical notation, for example for two-sided
iterates and coordinates associated with homeomorphisms and for integer
translation parameters; it is not the ambient set for the combinatorial
largeness notions below. Unless otherwise stated, if $n<m$ belong to
$\Nzero$, then
$[n,m]=\{n,n+1,\ldots,m\}$ and
$(n,m)=\{n+1,n+2,\ldots,m-1\}$.

\subsection{Topological dynamical systems}
A {\em topological dynamical system} (simply referred to as \emph{a system}) is a pair $(X,T)$, where $X$ is a
compact metric space with a metric $d_X$ and $T:X\to X$ is a homeomorphism.  A nonempty closed set
$Y\subseteq X$ is a \emph{subsystem} if $T(Y)=Y$.  It is
\emph{minimal} if it contains no proper nonempty subsystem.  Equivalently,
the forward orbit $\{T^ny:n\in\Nzero\}$ is dense in $Y$ for every $y\in Y$.
By Zorn's lemma, every nonempty subsystem contains a minimal subsystem.
For $x\in X$, put
\begin{equation}\label{eq:omega-def}
\omega(x)=\bigcap_{N\geq0}
\overline{\{T^nx:n\geq N\}}.
\end{equation}
Since $T$ is a homeomorphism, both $T\omega(x)\subseteq\omega(x)$ and
$T^{-1}\omega(x)\subseteq\omega(x)$ hold. Hence
$T\omega(x)=\omega(x)$, and $(\omega(x),T)$ is a subsystem of $(X,T)$.

A set that is both open and closed is called \emph{clopen}.  A space is
\emph{zero-dimensional} if it has a base consisting of clopen sets.

A {\em factor map} $\pi: X\rightarrow Y$ between two systems $(X,T)$
and $(Y, S)$ is a continuous surjective map which intertwines the
actions (i.e. $\pi\circ T= S\circ \pi$); one says that $(Y, S)$ is a {\it factor} of $(X,T)$ and
that $(X,T)$ is an {\it extension} of $(Y,S)$. The systems are said to be {\em isomorphic} if $\pi$ is bijective.

The following two lemmas are classical.

\begin{lemma}\cite[Theorem~3.1]{Ye92}\label{lem:power-components}
Let $(X,T)$ be minimal, and let $k\geq1$. Every
$T^k$-minimal subset is clopen. The distinct $T^k$-minimal subsets form a
finite partition of $X$, and $T$ permutes them transitively.
\end{lemma}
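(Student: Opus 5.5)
The plan is to work with $S=T^k$ directly on $X$ and to exploit the fact that $T$ commutes with $S$. First, existence: $(X,S)$ is a compact system, so by Zorn's lemma there is an $S$-minimal set $M_0\subseteq X$. For every $j$, the set $T^jM_0$ is again $S$-minimal, because $T^j$ is a homeomorphism commuting with $S$ and hence maps $S$-invariant closed sets to $S$-invariant closed sets bijectively. Next, $Y=M_0\cup TM_0\cup\cdots\cup T^{k-1}M_0$ is closed and satisfies $TY=Y$, since $T^kM_0=SM_0=M_0$. Minimality of $(X,T)$ then forces $Y=X$. Thus $X$ is covered by the finitely many $S$-minimal sets $T^jM_0$, $0\le j<k$.

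Second, every $S$-minimal set $M$ is one of these. Two $S$-minimal sets are either equal or disjoint, since their intersection is closed and $S$-invariant. Now $M\subseteq X=\bigcup_j T^jM_0$, so $M$ meets some $T^jM_0$, and hence $M=T^jM_0$. Consequently the distinct $S$-minimal sets are pairwise disjoint, closed, finite in number, and cover $X$, so they form a finite partition of $X$. Each of them is the complement of a finite union of closed sets, hence open, hence clopen. Transitivity of the $T$-action is immediate: $T$ sends $T^jM_0$ to $T^{j+1}M_0$, so the cyclic permutation it induces reaches every member from $M_0$.

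The only point needing care is the first step: that $T^jM_0$ is $S$-minimal, and that the union is all of $X$. The former follows by applying $T^{-j}$ to any closed $S$-invariant subset of $T^jM_0$, which yields a closed $S$-invariant subset of $M_0$. The latter uses invariance under $T$ together with the fact that $T$ is a homeomorphism, so $Y$ is a subsystem in the paper's sense (with $TY=Y$). No further obstacle is expected.
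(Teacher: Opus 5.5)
Your proof is correct and complete. The key points are all handled properly:
\begin{itemize}
\item $T^jM_0$ is again $T^k$-minimal, by pulling back along $T^{-j}$.
\item $\bigcup_{j<k}T^jM_0$ is closed and $T$-invariant, hence equals $X$ by minimality of $T$.
\item Distinct $T^k$-minimal sets are disjoint, since their intersection is closed and $T^k$-invariant.
\item Each piece is clopen, being the complement of a finite union of the others.
\end{itemize}

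The paper does not prove this lemma; it simply cites it as classical from \cite[Theorem~3.1]{Ye92}. Your argument is therefore a self-contained replacement for that citation, and it is the standard one. It uses nothing beyond minimality of $(X,T)$ and the fact that $T$ is a homeomorphism commuting with $T^k$.

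It also gives a little more than the statement asks. The number of distinct pieces is the least $d\geq1$ with $T^dM_0=M_0$. This $d$ divides $k$, because $\{j\in\Z:T^jM_0=M_0\}$ is a subgroup of $\Z$ containing $k$. Consequently $T$ acts on the pieces as a single $d$-cycle.
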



\begin{lemma}\cite[Theorem~1.15]{Auslander88}\label{lem:semiopen}
Every factor map $\pi:(X,T)\to(Y,S)$ between minimal systems is semi-open: if $U\subset X$ is nonempty and open, then
$\pi(U)$ has nonempty interior.
\end{lemma}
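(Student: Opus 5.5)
The plan is to prove semi-openness by reducing to the fact that minimal systems are, in a suitable sense, ``locally uniformly recurrent''. Let $U\subseteq X$ be nonempty and open.

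First, find finitely many translates of $U$ covering $X$. Since $(X,T)$ is minimal, every forward orbit is dense, so every point of $X$ lies in $T^{-n}U$ for some $n\geq0$. Hence $X=\bigcup_{n\ge0}T^{-n}U$, and compactness yields $N\in\N$ with
\[
X=\bigcup_{n=0}^{N}T^{-n}U.
\]
Since $U$ is open, we may replace $U$ by a nonempty open $V$ with $\overline V\subseteq U$; minimality likewise gives $X=\bigcup_{n=0}^{N}T^{-n}\overline V$ for some $N$.

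Next, push this cover down through $\pi$. Applying $\pi$ and the intertwining relation $\pi\circ T^{-n}=S^{-n}\circ\pi$ gives
\[
Y=\bigcup_{n=0}^{N}S^{-n}\pi(\overline V).
\]
Each $\pi(\overline V)$ is compact, hence closed, and each $S^{-n}$ is a homeomorphism. So $Y$ is a finite union of the closed sets $S^{-n}\pi(\overline V)$. By the Baire category theorem (or simply because a finite union of closed sets with empty interiors has empty interior, and $Y\neq\emptyset$ has nonempty interior in itself), some $S^{-n}\pi(\overline V)$ has nonempty interior. Applying $S^{n}$ shows that $\pi(\overline V)$ has nonempty interior.

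The step I expect to be the main obstacle is upgrading from $\pi(\overline V)$ to $\pi(U)$ having nonempty interior. This upgrade is immediate from $\pi(\overline V)\subseteq\pi(U)$: interior of a subset is contained in the interior of the superset. So in fact no real obstacle arises once one works with the closed set $\overline V$ to make the images closed. The only delicate point is ensuring closedness so that the finite-union argument applies, which is why the shrinking from $U$ to $\overline V$ is done. Note that minimality of $Y$ is not even needed beyond surjectivity of $\pi$; minimality of $X$ supplies the finite cover.
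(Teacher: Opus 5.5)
Your proof is correct. Shrinking to $\overline V\subseteq U$, covering $X$ by finitely many $T^{-n}\overline V$ via minimality and compactness, pushing the cover forward to $Y$ through $\pi$, and observing that a finite closed cover of $Y$ has a member with nonempty interior is the standard argument behind the result the paper quotes from Auslander's book without proof; your remark that only surjectivity of $\pi$ is needed is also right, since $\pi(X)$ is automatically minimal.
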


\subsection{Equicontinuity and maximal equicontinuous factors}

Let $(X,T)$ be a system. It is
\emph{equicontinuous} if the family $\{T^n:n\in\Z\}$ is equicontinuous;
equivalently, for every $\varepsilon>0$ there is $\delta>0$ such that
\[
 d_X(x,x')<\delta
 \quad\Longrightarrow\quad
 d_X(T^nx,T^nx')<\varepsilon, \quad \forall n\in\Z.
\]
An equicontinuous factor of $(X,T)$ is a factor which is an
equicontinuous system.

A factor map $\pi_{\rm eq}:X\to Z$ is called the \emph{maximal
equicontinuous factor} if $(Z,R)$ is equicontinuous and every
equicontinuous factor $\rho:X\to Y$ factors through $\pi_{\rm eq}$:
there is a unique factor map $\bar\rho:Z\to Y$ such that
\[
 \rho=\bar\rho\circ\pi_{\rm eq}.
\]
The maximal equicontinuous factor exists and is unique up to isomorphism.
If $(X,T)$ is minimal, then $(Z,R)$ is minimal; after choosing an origin,
$Z$ may be represented as a compact monothetic abelian group and
\[
 Rz=z+g,
\]
where $\{ng:n\in\Z\}$ is dense in $Z$.  We always use an equivalent
invariant metric on $Z$.  These facts, as well as the construction by the
equicontinuous structure relation, are standard; see
\cite[Chapter~2, pp.~35--48]{Auslander88}.
For later use, recall that the kernel relation
\[
 \{(x,y)\in X^2:\pi_{\rm eq}(x)=\pi_{\rm eq}(y)\}
\]
is called the \emph{equicontinuous structure relation}.

The following lemma will be used in the proof of \cref{main2}.

\begin{lemma}\label{lem:mef-affine}
Let $(X,T)$ be a minimal system with maximal equicontinuous
factor $\pi:X\to Z$, where $T$ acts on $Z$ by $z\mapsto z+g$.
\begin{enumerate}
\renewcommand{\labelenumi}{\textup{(\roman{enumi})}}
\item If $S:X\to X$ is a homeomorphism commuting with $T$, then there is a
unique $h\in Z$ such that
\[
 \pi(Sx)=\pi(x)+h\qquad(x\in X).
\]
If $S^2=\mathrm{id}$, then $2h=0$.
\item Let $W$ be a compact abelian group and let $f:X\to W$ be continuous.
If $f(Tx)=f(x)+w$ for all $x$, then there are a continuous homomorphism
$A:Z\to W$ and $c\in W$ such that
\[
 f(x)=A\pi(x)+c,
 \qquad
 A(g)=w.
\]
\end{enumerate}
\end{lemma}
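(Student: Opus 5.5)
For part (i), I will first show that $S$ preserves the equicontinuous structure relation and therefore descends to $Z$. Let $Q\subseteq X^2$ be the kernel of $\pi$. Consider the system $(X,T)$ with factor map $\pi\circ S:X\to Z$. It is a factor map because $\pi(STx)=\pi(TSx)=\pi(Sx)+g$ and $\pi\circ S$ is surjective, $S$ being a homeomorphism. Its image $(Z,R)$ is equicontinuous, so by maximality there is a factor map $\bar\rho:Z\to Z$ with $\pi\circ S=\bar\rho\circ\pi$. Then $\bar\rho$ commutes with the rotation $R$.

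Next I show that a continuous self-map commuting with a minimal rotation is itself a rotation. Put $h=\bar\rho(0)$. Then $\bar\rho(ng)=ng+h$ for all $n\in\Z$ (including negative $n$, since $R$ is invertible and $\bar\rho R^{-1}=R^{-1}\bar\rho$). Since $\{ng:n\in\Z\}$ is dense and $\bar\rho$ is continuous, $\bar\rho(z)=z+h$ on all of $Z$. This gives $\pi(Sx)=\pi(x)+h$.

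Uniqueness of $h$ is immediate: $h=\pi(Sx)-\pi(x)$ for any single $x$. If $S^2=\mathrm{id}$, applying the formula twice gives $\pi(x)=\pi(S^2x)=\pi(x)+2h$, so $2h=0$.

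For part (ii), the natural move is to turn $f$ into a factor map onto an equicontinuous system and then invoke maximality. The first choice would be the rotation $w'\mapsto w'+w$ on the closed subgroup $\overline{\{nw\}}$. However, $f(X)$ is only a coset-type subset, so I will instead work with the image $Y=f(X)\subseteq W$ under the rotation $w'\mapsto w'+w$. Here $Y$ is compact and invariant, and $f$ is a factor map $(X,T)\to(Y,+w)$ because $f$ intertwines $T$ and the rotation. The restriction of a group rotation with an invariant metric is an isometry, hence equicontinuous. If $W$ is not metrizable, I will pass to the metrizable quotients, or note that the construction of the factorization goes through with equicontinuity in the uniform-structure sense; this is a minor technicality that I will flag. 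Maximality then gives a continuous $\bar f:Z\to Y$ with $f=\bar f\circ\pi$ and $\bar f(z+g)=\bar f(z)+w$.

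Finally, set $c=\bar f(0)$ and $A(z)=\bar f(z)-c$. Then $A(ng)=nw$ for every $n\in\Z$. On the dense subgroup $\{ng\}$, $A$ is additive, since $A(ng+n'g)=(n+n')w=A(ng)+A(n'g)$. By continuity $A$ is a continuous homomorphism $Z\to W$, and it satisfies $A(g)=w$ and $f=A\circ\pi+c$.

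The only genuine obstacle is making sure the maximality property applies to these target systems. I expect the density argument for minimal rotations to be the step where care about metrizability and invertibility matters. Everything else is routine.
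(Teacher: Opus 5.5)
Your proof is correct and follows the paper's argument essentially step for step. You factor $\pi\circ S$ (respectively $f$ onto $(f(X),+w)$) through the universal property, identify the induced map on the dense cyclic subgroup $\{ng:n\in\Z\}$, and extend by continuity. The metrizability issue you flag is harmless: $f(X)$ is a continuous image of a compact metric space in a Hausdorff space, hence compact metrizable, and translation by $w$ is equicontinuous for its unique compatible uniformity.
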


\begin{proof}
For (i), the map $\pi\circ S$ is another equicontinuous factor map. By the
universal property of $\pi$, it factors as $\bar S\circ\pi$. Applying the same
argument to $S^{-1}$ shows that $\bar S$ is a homeomorphism. It commutes with
the rotation by $g$. Put $h=\bar S(0)$. On the dense cyclic subgroup
$\{ng:n\in\Z\}$,
\[
 \bar S(ng)=ng+h.
\]
Continuity gives $\bar S(z)=z+h$ for all $z\in Z$. If $S^2=\mathrm{id}$,
then translation by $2h$ is the identity, so $2h=0$.

For (ii), the image system $f(X)$, acted on by translation by $w$, is an
equicontinuous factor of $X$. Hence $f=\bar f\circ\pi$ for a continuous
$\bar f:Z\to W$. Put $c=\bar f(0)$ and $A=\bar f-c$. Then
$A(ng)=nw$ for all $n\in\Z$. If $n_i g\to z$ and $m_i g\to z'$, continuity
gives
\[
 A(z+z')=\lim_i A((n_i+m_i)g)
 =\lim_i(n_i+m_i)w=A(z)+A(z').
\]
Thus $A$ is a continuous homomorphism and $A(g)=w$.
\end{proof}

\begin{lemma}
\label{lem:power-mef}
Let $(X,T)$ be a minimal system with maximal equicontinuous
factor $\pi:X\to Z$, let $q\geq1$, and let $C$ be a
$T^q$-minimal component. Then
\[
 C=\pi^{-1}(\pi(C)),
\]
and
\[
 \pi|_C:(C,T^q)\longrightarrow(\pi(C),R^q)
\]
is the maximal equicontinuous factor of $(C,T^q)$.
\end{lemma}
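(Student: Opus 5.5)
We must prove two things: that $C=\pi^{-1}(\pi(C))$, and that $\pi|_C$ is the maximal equicontinuous factor of $(C,T^q)$.

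\textbf{Saturation.} By \cref{lem:power-components}, the $T^q$-minimal components $C_0=C,\,C_1=TC,\ldots,C_{p-1}=T^{p-1}C$ form a clopen partition of $X$, where $p\mid q$ and $T^pC=C$. Define $f:X\to\Z/p\Z$ by $f=j$ on $C_j$. Then $f$ is continuous and satisfies $f(Tx)=f(x)+1$. \cref{lem:mef-affine}(ii) applies with $W=\Z/p\Z$ and gives $f=A\circ\pi+c$. Hence $f$ is constant on the fibres of $\pi$. So if $\pi(y)=\pi(x)$ with $x\in C$, then $y\in C$, which gives $C=\pi^{-1}(\pi(C))$.

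\textbf{$\pi|_C$ is an equicontinuous factor.} The restriction $\pi|_C:(C,T^q)\to(\pi(C),R^q)$ is a factor map. Here $\pi(C)$ is closed, and it is $R^q$-invariant because $C$ is $T^q$-invariant. The system $(\pi(C),R^q)$ is equicontinuous since it is a rotation on a compact group. It is also minimal, being the image of a minimal system.

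\textbf{Maximality.} Let $\rho:(C,T^q)\to(Y,S)$ be any equicontinuous factor. We must show that $\rho$ factors through $\pi|_C$, i.e.\ that $\pi(x)=\pi(x')$ with $x,x'\in C$ implies $\rho(x)=\rho(x')$.

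The plan is to induce a factor of the whole system. Build $\tilde X=\bigsqcup_{j=0}^{p-1}Y\times\{j\}$ with the map $\tilde T(y,j)=(y,j+1)$ for $j<p-1$ and $\tilde T(y,p-1)=(S^{q/p}\!\cdot\!?\,y,0)$. This needs care, because $T^p$ rather than $T^q$ preserves $C$.

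A cleaner route is to replace $T^q$ by $T^p$ first. The $T^q$-minimal components coincide with the $T^p$-minimal sets, since both equal the same clopen partition. So it suffices to treat two steps separately:
\begin{itemize}
\item[(a)] the case $q=p$, where $C$ has exactly $p$ translates under $T$;
\item[(b)] passing from $T^p$ to $T^{q}=(T^p)^{q/p}$ on $C$, where $(C,T^p)$ is now totally minimal under further powers.
\end{itemize}

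For (a), define $\tilde\rho:X\to Y\times\Z/p\Z$ by $\tilde\rho(T^jx)=(\rho(x),j)$ for $x\in C$ and $0\le j<p$. This is continuous because the $C_j$ are clopen. It intertwines $T$ with the skew map $(y,j)\mapsto(y,j+1)$ for $j<p-1$, and $(y,p-1)\mapsto(Sy,0)$. That skew map is equicontinuous: its $p$-th power is $S\times\mathrm{id}$, and finitely many continuous maps preserve equicontinuity. By maximality of $\pi$, $\tilde\rho$ factors through $\pi$, hence $\rho$ factors through $\pi|_C$.

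For (b), an equicontinuous factor of $(C,T^{q})$ with $T^q=(T^p)^k$ must be handled by the same suspension trick, now applied to $(C,T^p)$ with $k$ copies. The suspension construction works for any power $k$; the difference is that the copies $T^{pj}C$ all equal $C$. So instead consider the diagonal map $x\mapsto(\rho(x),\rho(T^px),\ldots,\rho(T^{p(k-1)}x))\in Y^k$. This intertwines $T^p$ with the equicontinuous map $(y_0,\ldots,y_{k-1})\mapsto(y_1,\ldots,y_{k-1},Sy_0)$. Applying (a), or the maximality of $\pi|_C$ for $T^p$, shows that this map, and hence $\rho$, factors through $\pi$. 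We may in fact do this directly from $X$, with $q$ copies and no intermediate step: use $x\mapsto(\rho'(T^jx))_{j<q}$ after extending $\rho$ across components. Combining the two steps gives the claim in general.

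\textbf{Main obstacle.} The hardest part is making the extension of $\rho$ across the $T^q$-components well defined and continuous. This is exactly where the clopen partition from \cref{lem:power-components} is used.
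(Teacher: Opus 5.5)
Your proof is correct, but it takes a different route from the paper. The paper disposes of \cref{lem:power-mef} by citation. It combines Ye's decomposition, the invariance of the regionally proximal relation under passing to $T^q$, and the identification of the kernel of the maximal equicontinuous factor with that relation. Saturation then holds because pairs regionally proximal for $T^q$ cannot lie in distinct clopen $T^q$-components. Maximality holds because the kernel of $\pi|_C$ is exactly the regionally proximal relation of $(C,T^q)$. You never use regional proximality and rely only on the universal property. Saturation comes from the finite rotation factor $f:X\to\Z/p\Z$ together with \cref{lem:mef-affine}(ii). Maximality comes from inducing an equicontinuous factor of $(C,T^p)$ up to the tower $(Y\times\Z/p\Z,\tilde S)$ over $(X,T)$. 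Before that, the block map $x\mapsto(\rho(T^{pi}x))_{i<k}$ turns an equicontinuous factor of $(C,T^{pk})$ into one of $(C,T^p)$. The two steps merge into the single map $T^jx\mapsto\bigl((\rho(T^{pi}x))_{i<k},j\bigr)\in Y^k\times\Z/p\Z$ for $x\in C$, $0\le j<p$, which is presumably what your closing remark about ``$q$ copies'' intends. Your version is elementary and self-contained given \cref{lem:power-components}. The paper's is a one-line appeal that fits the regionally proximal framework it already needs for \cref{lem:Auslander-joint}. When you write it up, delete the abandoned first definition of $\tilde T$. Also replace ``totally minimal under further powers'' by what you actually use, namely that $C$ is both $T^p$-minimal and $T^q$-minimal. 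The stronger claim is false: $(C,T^p)$ need not be totally minimal, already for the dyadic odometer with $q=2$.
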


This lemma is a standard consequence of Ye's cyclic decomposition theorem
\cite[Theorem~3.1]{Ye92} and the invariance of the regionally proximal
relation under powers
\cite[Lemma~2.7, p.~290]{GlasnerHuangShaoWeissYe25}; see also
\cite[Theorem~2.6, pp.~289--290]{GlasnerHuangShaoWeissYe25} for the
characterization of the maximal equicontinuous factor.

\subsection{Return times and minimal systems}

All systems in this subsection are systems in the sense fixed above; in
particular, their phase spaces are compact metric spaces and their
transformations are homeomorphisms. For a minimal system, we use the
notation $\pi:X\to Z$ and $Rz=z+g$ fixed in the preceding subsection.

For $x\in X$ and an open set $U\subset X$, define
\[
 N_T(x,U)=\{n\in\N:T^nx\in U\}.
\]

All the largeness notions used below concern subsets of $\N$. For
$P\subset\N$ and $f\in\Nzero$, put
$P-f=\{n\in\N:n+f\in P\}$. A set $P\subset\N$ is \emph{syndetic} if it
has bounded gaps, and it is \emph{thick} if it contains arbitrarily long
intervals. A set $P\subset\N$ is \emph{piecewise syndetic} if
$\bigcup_{f\in F}(P-f)$ is thick for some finite $F\subset\Nzero$. This
property is preserved by translations within $\N$, finite modifications,
and passing to supersets. A set $S\subset\N$ is \emph{thickly syndetic}
if, for every finite $K\subset\Nzero$, the set
\[
 \{n\in\N:K+n\subset S\}
\]
is syndetic. A thickly syndetic set meets every piecewise syndetic set.
Indeed, if $P$ is piecewise syndetic, choose a finite
$F\subset\Nzero$ such that $\bigcup_{f\in F}(P-f)$ is thick. The
syndetic set $\{n\in\N:F+n\subset S\}$ meets this thick union. If
$n\in P-f$ belongs to the intersection, then $n+f\in P\cap S$.

The following result is an easy observation. We give a proof for completeness.

\begin{lemma}\label{lem:ps-return}
Let $(X,T)$ be a system, let $x\in X$, and let
$M\subset\omega(x)$ be minimal. If $U\subset X$ is open and
$U\cap M\neq\emptyset$, then $N_T(x,U)$ is piecewise syndetic.
\end{lemma}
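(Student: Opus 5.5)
The argument uses only the minimality of $M$ and the fact that $M\subseteq\omega(x)$. Throughout, pick $y\in U\cap M$.

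\emph{Step 1: a syndetic return set and a finite cover.} Since $(M,T)$ is minimal, $N_T(y,U)$ is syndetic; in particular, it contains some $f_0\in\N$ with $T^{f_0}y\in U$. I will instead work with a cover. Choose an open set $V\ni y$ with $\overline{V}$ small, say $T^{f_0}\overline{V}\subseteq U$ after shrinking. By minimality and compactness of $M$, the sets $T^{-f}V$, $f\in\Nzero$, cover $M$. Hence there is a finite $F\subset\Nzero$ with $M\subseteq W:=\bigcup_{f\in F}T^{-f}V$, and $W$ is open.

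\emph{Step 2: long orbit segments near $M$.} Let $L\in\N$ be arbitrary. By continuity and compactness, there is an open set $O\supseteq M$ such that $T^{j}O\subseteq W$ for all $0\le j\le L$. This holds because $T^jM=M\subseteq W$ for every $j$, and there are only finitely many $j$. Since $M\subseteq\omega(x)$, there are arbitrarily large $n$ with $T^nx\in O$. For such $n$ and each $0\le j\le L$, we have $T^{n+j}x\in W$. So for each $j$ there is $f\in F$ with $T^{n+j+f}x\in V$.

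\emph{Step 3: conclusion.} Put $F'=F+f_0$. For every $j\in[0,L]$ there is $f\in F$ with $T^{n+j+f+f_0}x\in T^{f_0}V\subseteq U$, that is, $n+j\in N_T(x,U)-f'$ for some $f'\in F'$. Thus $\bigcup_{f'\in F'}(N_T(x,U)-f')$ contains the interval $[n,n+L]$. Since $L$ is arbitrary and $F'$ does not depend on $L$, this union is thick, so $N_T(x,U)$ is piecewise syndetic.

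This can be simplified by skipping $f_0$ altogether: take $V=U\cap X$ open with $V\cap M\neq\emptyset$ directly. The cover $M\subseteq\bigcup_{f\in F}T^{-f}U$ exists because every orbit in $M$ visits the open set $U\cap M$, again by minimality and compactness. With that choice Step 3 needs no shift.

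The only point needing care is Step 2: the neighbourhood $O$ must be uniform for a fixed window length $L$ while $F$ stays independent of $L$. This is exactly why the finite cover is fixed first and $O$ is chosen afterwards.
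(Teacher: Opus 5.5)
Your proof is correct, and once you make the simplification you note at the end (using $U$ itself in place of $V$, so no shift $f_0$ is needed) it is exactly the paper's argument. The paper also fixes a finite $F$ with $M\subseteq G=\bigcup_{f\in F}T^{-f}U$, uses that $\bigcap_{j=0}^{L}T^{-j}G$ is a neighbourhood of $M$ visited by $T^nx$ for arbitrarily large $n$, and concludes $[n,n+L]\subseteq\bigcup_{f\in F}\bigl(N_T(x,U)-f\bigr)$; the detour through $f_0$ and $\overline{V}$ in your Step 1 is harmless but unnecessary.
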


\begin{proof}
Minimality and compactness give a finite set $F\subset\Nzero$ such that
\[
 M\subset \bigcup_{f\in F}T^{-f}U=:G.
\]
For $L\geq0$, the set $\bigcap_{j=0}^{L}T^{-j}G$ is an open neighborhood of
$M$. Since $M\subset\omega(x)$, arbitrarily large $n$ satisfy
$T^nx\in\bigcap_{j=0}^{L}T^{-j}G$. For each $0\leq j\leq L$, choose
$f_j\in F$ such that $T^{n+j+f_j}x\in U$. It follows that
\[
 [n,n+L]\subset\bigcup_{f\in F}\bigl(N_T(x,U)-f\bigr),
\]
so $N_T(x,U)$ is piecewise syndetic.
\end{proof}

For $s\in\N$, let
\[
 X_{\Delta}^s=\{(x,\ldots,x):x\in X\}\subset X^s.
\]
Write
\[
 T^{(s)}=T\times\cdots\times T:X^s\to X^s
\]
for the diagonal homeomorphism.
A tuple $(x_1,\ldots,x_s)\in X^s$ is called \emph{jointly regionally
proximal} if, for every choice of neighborhoods $G_i$ of $x_i$ and every
open neighborhood $\mathcal D$ of $ X_{\Delta}^s$ in $X^s$, there are
$x_i'\in G_i$ and $k\in\Z$ such that
\[
 (T^kx_1',\ldots,T^kx_s')\in\mathcal D.
\]
We denote the set of such tuples by $Q^{(s)}(X,T)$. The integer iterate is
well defined because $T$ is a homeomorphism. When $s=2$, this is the
usual regionally proximal relation.

The following consequence of Auslander's finite regional proximality
theorem will be used below. See also \cite[Corollary 6.9]{HLY}.

\begin{lemma}\label{lem:Auslander-joint}
Let $(X,T)$ be a minimal system with maximal equicontinuous factor
$\pi:X\to Z$. If $s\in\N$ and
\[
 \pi(x_1)=\cdots=\pi(x_s),
\]
then $(x_1,\ldots,x_s)\in Q^{(s)}(X,T)$.
\end{lemma}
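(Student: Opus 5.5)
The plan is to deduce \cref{lem:Auslander-joint} from Auslander's finite regional proximality theorem. That theorem says that for a minimal system the regionally proximal relation $Q=Q^{(2)}(X,T)$ is an equivalence relation, and that it coincides with the equicontinuous structure relation. The stronger form we need is the $s$-tuple version: whenever $x_1,\dots,x_s$ are pairwise regionally proximal, a single return time $k$ brings perturbed points simultaneously close to the diagonal. The proof has three steps.

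\emph{Step 1 (pairwise relation).} By the identification $Q=\ker\pi$ for minimal systems, the hypothesis $\pi(x_1)=\cdots=\pi(x_s)$ gives $(x_i,x_j)\in Q$ for all $i,j$. In particular $(x_1,x_j)\in Q$ for every $j$.

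\emph{Step 2 (reduction to a single point).} Fix neighborhoods $G_i\ni x_i$ and an open $\mathcal D\supset X^s_\Delta$. By compactness there is $\varepsilon>0$ such that $\mathcal D$ contains every $s$-tuple of diameter less than $\varepsilon$. It therefore suffices to find $x_i'\in G_i$ and $k\in\Z$ such that $d(T^kx_1',T^kx_j')<\varepsilon/2$ for every $j$.

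\emph{Step 3 (induction on $s$).} This is the main step. Suppose $x_1',\dots,x_{s-1}'$ and $k$ have been found for the first $s-1$ points. We want to keep this configuration and append $x_s$. The pairwise property alone would give a point $x_1''$ near $x_1$, a point $x_s'$ near $x_s$, and a time $k'$, but with $x_1''$ and $k'$ generally different from $x_1'$ and $k$.

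To reconcile them we use the Auslander/Ellis description of $Q$ through the enveloping semigroup. For minimal $X$, $(x,y)\in Q$ if and only if there exist $u\in E(X)$ and nets with $T^{n_\alpha}x_\alpha\to z$ and $T^{n_\alpha}y_\alpha\to z$. The key point is that the witness point $z$ can be prescribed arbitrarily. This is where minimality and the "$Q$ is closed and invariant" property enter: if $(x,y)\in Q$ then for every $z\in X$ we can choose nets $x_\alpha\to x$, $y_\alpha\to y$ and $n_\alpha$ with $T^{n_\alpha}x_\alpha,T^{n_\alpha}y_\alpha\to z$. Granting this, apply the prescribed-$z$ property to every pair $(x_1,x_j)$ with a common target $z$, then merge the $s$ nets into one.

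The merging is the main obstacle. The separate nets have different times $n_\alpha^{(j)}$ and different perturbations of $x_1$. The approach I would take, following Auslander and \cite[Corollary 6.9]{HLY}, is to replace pairs by a minimal point of $(X^s,T^{(s)})$. Consider the orbit closure of $X^s_\Delta$ under the product action together with the relation generated by $Q$. Then show that the set of tuples lying in $Q^{(s)}$ is closed and $T^{(s)}$-invariant. Show also that it contains all tuples in a single $\pi$-fibre, using two facts: the fibres of $\pi$ are exactly the $Q$-classes, and the map $(x,y)\mapsto$ "approach to diagonal" can be iterated because $\pi$ is open on the appropriate orbit closure (\cref{lem:semiopen}).

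Concretely, I would first prove the case where the $x_i$ are distal-separated by using the $s=2$ case $s-1$ times inside $(X^{s-1}\times X,\,T^{(s-1)}\times T)$. At each stage, minimality of the orbit closure of the diagonal points lets a good time for one pair be transported, by a small perturbation, to a good time for the already-constructed tuple. If a fully self-contained argument proves too long, I would instead cite the theorem directly as the paper does, since the statement is precisely \cite[Corollary 6.9]{HLY}.
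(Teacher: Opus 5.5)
Your Step~1 and your fallback together are exactly the paper's proof. As written, though, your Step~3 is not a proof, and it is where all the content of the lemma lies.

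\textbf{The gap in Step~3.} Prescribing a common target $z$ for each pair $(x_1,x_j)$ is fine; it follows from minimality. But it synchronizes nothing. The pairs come with different perturbations $x_1^{(j)}$ of $x_1$ and different times $n^{(j)}$. Knowing that every $T^{n^{(j)}}x_1^{(j)}$ is close to $z$ gives no way to produce one perturbation of $x_1$ and one time for all $j$ at once. That would need equicontinuity-type control, which is exactly what is missing inside the fibres of $\pi$.

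The devices you propose for the merging do not supply it:
\begin{itemize}
\item \cref{lem:semiopen} gives semi-openness, not openness. The map onto the maximal equicontinuous factor is in general not open: for a Sturmian subshift it is almost one-to-one but not injective, and an open almost one-to-one factor map between minimal systems is injective. So ``$\pi$ is open on the appropriate orbit closure'' is not available.
\item The product $(X^{s-1}\times X,T^{(s-1)}\times T)$ is not minimal. The case $s=2$ rests on $Q=\ker\pi$ for minimal systems, so it cannot be invoked there.
\item The ``distal-separated'' case is not defined. The claim that a good time for one pair can be transported, by a small perturbation, to a good time for the already constructed tuple is asserted rather than proved; it is essentially the lemma itself.
\item Closedness and $T^{(s)}$-invariance of $Q^{(s)}$ are easy, but they do not help unless you already know that $Q^{(s)}$ contains the tuples lying in a common fibre.
\end{itemize}

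\textbf{The citation route.} Step~1 followed by your fallback is the paper's argument. Pairwise regional proximality comes from the identification of $Q$ with the equicontinuous structure relation for minimal abelian actions. The $s$-tuple conclusion then comes from Auslander's finite regional proximality theorem \cite[Theorem~8, p.~332]{AuslanderGroup04}; see also \cite[Corollary~6.9]{HLY}. If you take this route, two corrections are needed:
\begin{itemize}
\item In your first paragraph you attribute to Auslander's theorem the statement that $Q$ is an equivalence relation equal to $\ker\pi$. That is only the pairwise input; the finite regional proximality theorem is the tuple statement itself.
\item That theorem carries an algebraic hypothesis on the acting group, which must be verified. The paper notes that for abelian groups it is automatic, by Lemma~5(iv) and the paragraph preceding it in \cite{AuslanderGroup04}.
\end{itemize}
With these corrections, Steps~2 and~3 can simply be dropped.
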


\begin{proof}
The assertion is automatic for $s=1$. Suppose that $s\geq2$. For a
minimal abelian action, the regionally proximal relation is the
equicontinuous structure relation
\cite[p.~327]{AuslanderGroup04}. Thus every pair $(x_1,x_i)$ is
regionally proximal. For an abelian acting group, the paragraph preceding
Lemma~5 and Lemma~5(iv) in
\cite[pp.~330--331]{AuslanderGroup04} show that the algebraic hypothesis
in Auslander's finite regional proximality theorem is automatic. Therefore
\cite[Theorem~8, p.~332]{AuslanderGroup04} shows that the entire tuple is
jointly regionally proximal.
\end{proof}

The following result plays an important role in the proof of \cref{main2}.

\begin{lemma}\label{lem:full-fibre-returns}
Let $(X,T)$ be a minimal system with maximal equicontinuous
factor $\pi:(X,T)\to (Z,R)$. If $U,V\subset X$ are nonempty open sets and
\[
 \pi(V)=Z,
\]
then
\[
 N_T(U,V):=\{n\in\N:U\cap T^{-n}V\neq\emptyset\}
\]
is thickly syndetic.
\end{lemma}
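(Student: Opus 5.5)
The plan is to unpack thick syndeticity into a statement about finitely many open sets, and then use \cref{lem:Auslander-joint} to make those sets return near a common point. Fix a finite $K=\{k_1,\ldots,k_s\}\subset\Nzero$ and put $W_j=T^{-k_j}V$. Then
\[
 n+K\subseteq N_T(U,V)\iff n\in\bigcap_{j=1}^sN_T(U,W_j),
\]
so it suffices to show that this finite intersection is syndetic. The hypothesis passes to each $W_j$: since $\pi\circ T^{-k_j}=R^{-k_j}\circ\pi$ and $R$ is a bijection of $Z$, we get $\pi(W_j)=R^{-k_j}\pi(V)=Z$. We may also shrink $U$ to a small ball around a chosen point $u$.

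First I would use the full-fibre condition to line up a single fibre. Let $z=\pi(u)$. Since $\pi(W_j)=Z$, pick $y_j\in W_j$ with $\pi(y_j)=z$ for every $j$. By \cref{lem:Auslander-joint}, the $(s+1)$-tuple $(u,y_1,\ldots,y_s)$ lies in $Q^{(s+1)}(X,T)$. Hence for every $\varepsilon>0$ there are $u'\in U$, $y_j'\in W_j$ and $m\in\Z$ such that all the points $T^mu'$ and $T^my_j'$ lie within $\varepsilon$ of one point $p$.

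Next I would convert this into a syndetic set of good times. Let $G_0=T^mU$ and $G_j=T^mW_j$; these are open sets, each meeting $B(p,\varepsilon)$. The aim is to choose $\varepsilon$, depending only on $U$ and the $W_j$, so that the following holds: whenever a translate $T^n$ carries a suitable piece of $G_0$ near $p$, the set $T^nG_0$ meets every $G_j$. Then $n\in N_T(U,W_j)$ for all $j$, because $N_T(U,W_j)=N_T(G_0,G_j)$. By minimality of $(X,T)$, the return times of points of $G_0$ to a neighbourhood of $p$ form a syndetic set. To get the required uniformity in $\varepsilon$ I would use equicontinuity of $Z$: the condition $\pi(W_j)=Z$ is rotation-invariant. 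Concretely, fix $\delta>0$ and use compactness of $Z$ together with semi-openness (\cref{lem:semiopen}) to find finitely many balls in $Z$ of radius $\delta$ such that each $W_j$ contains an open set lying over each ball. Then "$T^nU$ meets $W_j$" reduces to the statement that $R^n\pi(U)$ lands in a fibre region where both sets are forced to meet. That happens for a syndetic set of $n$, namely the return times of the rotation $R$ to a $\delta$-ball.

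The main obstacle is the last step, passing from closeness in $X$ to genuine intersection of open sets. Joint regional proximality only gives points that are $\varepsilon$-close, and $\varepsilon$ has to be fixed before the witnesses appear. To resolve this I would first try to fix a finite open cover of $X$ by $\varepsilon$-balls and apply \cref{lem:Auslander-joint} to the open product set $U\times W_1\times\cdots\times W_s$. Working in the orbit closure of this product under $T^{(s+1)}$, I would look for one nonempty open $O$ and one time $m$ with $T^mU\supseteq O$ and $T^mW_j\cap O\neq\emptyset$ for all $j$. The return set $N_T(O,O)$ is syndetic by minimality, and it would then be contained, after shifting by $m$, in $\bigcap_jN_T(U,W_j)$.
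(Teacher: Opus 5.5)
There is a genuine gap in the step that turns joint regional proximality into syndeticity. Your reduction is fine: with $W_j=T^{-k_j}V$ one has $\pi(W_j)=Z$, and it suffices to show that $\bigcap_j N_T(U,W_j)$ is syndetic.

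\textbf{Where the argument breaks.} The configuration you aim for at the end is a nonempty open $O$ and an $m$ with $O\subseteq T^mU$ and $T^mW_j\cap O\neq\emptyset$. This forces $T^m(U\cap W_j)\neq\emptyset$, i.e.\ $U\cap T^{-k_j}V\neq\emptyset$ for every $j$. That is the case $n=0$ of what you are trying to prove, and it need not hold. Applying one homeomorphism to both sets cannot create an intersection; regional proximality produces nearby points, not overlapping sets. Even granting such an $O$, $n\in N_T(O,O)$ does not give $T^nO\cap(O\cap T^mW_j)\neq\emptyset$ for all $j$ at once, which is the original problem again.

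The fibre-region argument in your third paragraph fails for the same reason: open sets lying over the same part of $Z$ need not meet. In the weakly mixing case $Z$ is a point, the hypothesis $\pi(V)=Z$ is vacuous, and the factor carries no information. Finally, minimality of $(X,T)$ gives syndetic return times for a single point. You need simultaneous returns of several distinct points, and $(X^s,T^{(s)})$ is in general not minimal.

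\textbf{The missing idea.} The paper's argument uses the fact that $F+n\subseteq N_T(U,V)$ allows a different witness $u_i\in U$ for each $f_i$, so you should not tie everything to one point $u$.
\begin{enumerate}
\item Take $w_i\in V$ with $\pi(w_i)=R^{f_i}z$ and put $v_i=T^{-f_i}w_i$, so all $v_i$ lie in the fibre over $z$.
\item By minimality, $X=\bigcup_{e\in E}T^{-e}U$ for a finite $E$. Apply \cref{lem:Auslander-joint} to $(v_1,\ldots,v_s)$ with the diagonal neighbourhood $\mathcal D=\bigcup_{e\in E}(T^{-e}U)^s$. This cover by translates of $U$, rather than by $\varepsilon$-balls, is what turns closeness into membership in $U$. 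It yields one integer $h$ and nonempty open sets $G_i'\subseteq T^{-f_i}V$ with $T^hG_i'\subseteq U$ for all $i$.
\item For syndeticity, choose $x_1\in G_1'$ and $x_i=T^{a_i}x_1\in G_i'$. The $T^{(s)}$-orbit closure of $\mathbf x=(x_1,\ldots,x_s)$ is the graph of $x\mapsto(x,T^{a_2}x,\ldots,T^{a_s}x)$, hence minimal. It meets $U^s$ at $(T^{(s)})^h\mathbf x$.
\item Therefore $\{n:T^{-n}x_i\in U\text{ for all }i\}$ is syndetic. For each such $n$, the points $u_i=T^{-n}x_i\in U$ satisfy $T^{f_i+n}u_i\in V$, so $F+n\subseteq N_T(U,V)$.
\end{enumerate}
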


\begin{proof}
Fix a nonempty finite set
$F=\{f_1,\ldots,f_s\}\subset\Nzero$ and a point $z\in Z$.
For each $i$, choose $w_i\in V$ with
\[
 \pi(w_i)=R^{f_i}z,
\]
where $R$ is the transformation induced by $T$ on $Z$, and put
$v_i=T^{-f_i}w_i$. Then $\pi(v_i)=z$ for every $i$.
By \cref{lem:Auslander-joint},
\[
 (v_1,\ldots,v_s)\in Q^{(s)}(X,T).
\]

Choose open neighborhoods
\[
 v_i\in G_i\subset T^{-f_i}V\qquad(1\leq i\leq s).
\]
By minimality and compactness, there is a finite set $E\subset\Nzero$ such
that
\[
 X=\bigcup_{e\in E}T^{-e}U.
\]
Consequently,
\[
 \mathcal D=\bigcup_{e\in E}\underbrace{T^{-e}U\times\cdots\times T^{-e}U}_{s\text{ times }}
\]
is an open neighborhood of the diagonal in $X^s$. Joint regional
proximality gives points $v_i'\in G_i$ and $k\in\Z$ such that
\[
 (T^kv_1',\ldots,T^kv_s')\in\mathcal D.
\]
Thus, for some $e\in E$, all the points $T^{k+e}v_i'$ lie in $U$.
After shrinking around the $v_i'$, we obtain nonempty open sets
$G_i'\subset G_i$ and one integer $h=k+e$ such that
\begin{equation}\label{eq:simultaneous-compression}
 T^hG_i'\subset U\qquad(1\leq i\leq s).
\end{equation}

Choose $x_1\in G_1'$. Since $(X,T)$ is minimal, for each $2\leq i\leq s$
there is $a_i\in\Nzero$ such that
\[
 x_i:=T^{a_i}x_1\in G_i';
\]
put $a_1=0$. The point
\[
 \mathbf x=(x_1,\ldots,x_s)
\]
is minimal under the diagonal action $T^{(s)}$, because its orbit closure
is the graph system
\[
 \{(x,T^{a_2}x,\ldots,T^{a_s}x):x\in X\},
\]
which is conjugate to $(X,T)$. By
\eqref{eq:simultaneous-compression}, $(T^{(s)})^h\mathbf x\in U^s$.
Thus $U^s$ meets the minimal graph system in a nonempty relatively open
set. The restriction of $T^{(s)}$ to this graph system is a minimal
homeomorphism, so its inverse is minimal as well. Therefore
\[
 A=\{n\in\N:(T^{(s)})^{-n}\mathbf x\in U^s\}
\]
is syndetic.

If $n\in A$, let $u_i=T^{-n}x_i\in U$. Since
$x_i\in G_i\subset T^{-f_i}V$,
\[
 T^{f_i+n}u_i=T^{f_i}x_i\in V.
\]
Hence $f_i+n\in N_T(U,V)$ for every $i$, and so
\[
 F+n\subset N_T(U,V).
\]
Thus the occurrence set
\[
 \{t\in\N:F+t\subset N_T(U,V)\}
\]
contains the syndetic set $A$. Since $F$ was arbitrary,
$N_T(U,V)$ is thickly syndetic.
\end{proof}

\subsection{Ultrafilters}
We shall use standard facts about ultrafilters (for more details, see \cite[Chapters~2--4 and 19]{HS}).

Let $\mathcal P(\Nzero)$ be the power set of $\Nzero$.  An ultrafilter on
$\Nzero$ is a family $p\subseteq\mathcal P(\Nzero)$ containing no empty
set, closed under finite intersections and supersets, and containing
exactly one of $A$ and $\Nzero\setminus A$ for every
$A\subseteq\Nzero$.  We identify $n\in\Nzero$ with the \emph{principal
	ultrafilter} $\{A\subseteq\Nzero:n\in A\}$; the remaining ultrafilters are
called \emph{nonprincipal ultrafilters}.  The space of all ultrafilters is denoted by $\bN$,
and
$
\Nstar=\bN\setminus\Nzero.
$
The space $\bN$ is a compact Hausdorff space, with the clopen basis
$\{p\in\bN:A\in p\}$, where $A\subseteq\Nzero$.

For $A\subseteq\Nzero$ and $n\in\Nzero$, put
$A-n=\{m\in\Nzero:m+n\in A\}$.  Addition on $\Nzero$ extends to $\bN$
by
\begin{equation}\label{eq:beta-add}
A\in p+q
\quad\Longleftrightarrow\quad
\{n\in\Nzero:A-n\in q\}\in p,
\end{equation}
see \cite[Theorem~4.12(b)]{HS}. With this operation, $(\bN,+)$ is a compact right-topological semigroup \cite[Theorems~3.28, 4.1, and 4.4]{HS}.

For a sequence $\{x_n\}_{n\in \Nzero}$ in a compact Hausdorff space $X$ and
$p\in\bN$, write $p\!\operatorname{-lim}_n x_n$ for the unique point
$x\in X$ such that
$\{n\in \Nzero:x_n\in U\}\in p$ for every nonempty open neighborhood $U$ of $x$. If $(X,T)$ is a
system, then its forward iterates define an action of $\beta\Nzero$ on
$X$ by putting
$
px=p\!\operatorname{-lim}_{n}T^n x.
$
By \cite[Theorem~19.11 and Remark~19.13]{HS}
\begin{equation}\label{eq:action}
(p+q)x=p(qx).
\end{equation}
If $\pi:(X,T)\to(Y,S)$ is a factor map, then
$\pi(T^n x)=S^n\pi(x)$, and the preservation of $p$-limits under
continuous maps gives
\begin{equation}\label{eq:factor-action}
   \pi(px)=p\pi(x);
\end{equation}
see \cite[Theorem~3.49]{HS}.

For $k\in\N$, let $D_k:\bN\to\bN$ be the continuous extension of
$n\mapsto kn$. By \cite[Lemma~3.30]{HS},
\begin{equation}\label{eq:Dk-member}
A\in D_kp
\quad\Longleftrightarrow\quad
\{n\in\Nzero:kn\in A\}\in p.
\end{equation}
Since multiplication by $k$ is a semigroup homomorphism and its range lies
in the topological center of $\bN$,
\begin{equation}\label{eq:Dk-hom}
   D_k(p+q)=D_kp+D_kq
\end{equation}
by \cite[Corollary~4.22]{HS}. We write $D=D_2$.

A nonempty subset $L$ of a semigroup $S$ is a \emph{left ideal} if
$SL\subseteq L$, and is \emph{minimal} if it contains no proper left
ideal.  An element $e\in S$ is an \emph{idempotent} if $e^2=e$.  Every
left ideal of a compact right-topological semigroup contains a minimal
left ideal, and every minimal left ideal is closed and contains an
idempotent \cite[Corollary~2.6]{HS}.

Next, we introduce some properties of ultrafilters.
\begin{lemma}\label{lem:ultrafilter-facts}
	Let $p,q\in\bN$ and $k\in\N$. Then:
	\begin{itemize}
		\item[(i)] For every $r\in\Nzero$, one has $r+p=p+r$.
		\item[(ii)] If $q\in\Nstar$, then $p+q\in\Nstar$; if
		$p\in\Nstar$, then $D_kp\in\Nstar$.
		\item[(iii)] Every $q\in\Nstar$ has a unique representation
		\begin{equation}\label{eq:parity}
		q=\varepsilon+D_kp,
		p\in\Nstar,
		\varepsilon\in\{0,1,\ldots,k-1\}.
		\end{equation}
	\end{itemize}
\end{lemma}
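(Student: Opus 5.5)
The three items are routine facts about $\bN$. For each I will reduce the claim to a statement about membership of sets in the ultrafilters involved, using \eqref{eq:beta-add} and \eqref{eq:Dk-member}.

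For (i), fix $r\in\Nzero$ and $A\subseteq\Nzero$. By \eqref{eq:beta-add}, $A\in r+p$ if and only if $\{n:A-n\in p\}$ belongs to the principal ultrafilter $r$, that is, if and only if $A-r\in p$. On the other side, $A\in p+r$ if and only if $\{n:A-n\in r\}\in p$. Now $A-n\in r$ means $r\in A-n$, i.e.\ $n+r\in A$, i.e.\ $n\in A-r$. So the second condition is also $A-r\in p$. Hence $r+p=p+r$.

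For (ii), an ultrafilter is nonprincipal exactly when it contains every cofinite set, equivalently no finite set. Suppose $q\in\Nstar$ and $F$ is finite. For each $n$, the set $F-n$ is finite, hence not in $q$. So $\{n:F-n\in q\}=\emptyset\notin p$, and therefore $F\notin p+q$. For $D_kp$ with $p\in\Nstar$, use \eqref{eq:Dk-member}: $F\in D_kp$ if and only if $\{n:kn\in F\}\in p$. That set is finite because $k\geq1$, so it is not in $p$. Hence $D_kp\in\Nstar$.

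For (iii), existence comes first. The residue classes $k\Nzero+\varepsilon$, $\varepsilon\in\{0,\ldots,k-1\}$, partition $\Nzero$, so exactly one of them, say with index $\varepsilon$, lies in $q$. Define
\[
p=\{B\subseteq\Nzero: kB+\varepsilon\in q\}.
\]
This is an ultrafilter. It is closed under supersets and finite intersections because $B\mapsto kB+\varepsilon$ is injective and preserves both operations. For each $B$, the sets $kB+\varepsilon$ and $k(\Nzero\setminus B)+\varepsilon$ partition $k\Nzero+\varepsilon\in q$, so exactly one of $B$ and $\Nzero\setminus B$ lies in $p$.

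The ultrafilter $p$ is nonprincipal: if $B$ is finite, then $kB+\varepsilon$ is finite, so it is not in $q$, and $B\notin p$.

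To check $q=\varepsilon+D_kp$, use (i) and the computation there: $A\in\varepsilon+D_kp$ if and only if $A-\varepsilon\in D_kp$. By \eqref{eq:Dk-member} this holds if and only if $\{n:kn+\varepsilon\in A\}\in p$, which by the definition of $p$ means $k\{n:kn+\varepsilon\in A\}+\varepsilon\in q$. That last set equals $A\cap(k\Nzero+\varepsilon)$, and since $k\Nzero+\varepsilon\in q$ it lies in $q$ exactly when $A\in q$.

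For uniqueness, suppose $q=\varepsilon'+D_kp'$. Then $k\Nzero+\varepsilon'\in q$, because $\{n:kn+\varepsilon'\in k\Nzero+\varepsilon'\}=\Nzero\in p'$. This forces $\varepsilon'=\varepsilon$. Then $B\in p'$ if and only if $kB+\varepsilon\in q$, by the same chain of equivalences read backward, so $p'=p$.

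None of these steps is a genuine obstacle. The only point that needs care is unwinding the convention in \eqref{eq:beta-add} correctly when one summand is principal.
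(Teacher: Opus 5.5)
Your proof is correct and follows essentially the same route as the paper: (i) and (ii) by unwinding \eqref{eq:beta-add} and \eqref{eq:Dk-member}, and (iii) by choosing the unique residue class $k\Nzero+\varepsilon$ in $q$ and pulling $q$ back along the map $n\mapsto kn+\varepsilon$. The differences are cosmetic: you check nonprincipality of $p$ directly via finite sets, whereas the paper notes that a principal $p$ would make $q$ principal, and you write out the uniqueness computation explicitly where the paper states it briefly.
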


\begin{proof}
For (i), fix $A\subseteq\Nzero$.  By \eqref{eq:beta-add},
\[
   A\in r+p
   \Longleftrightarrow A-r\in p
   \Longleftrightarrow A\in p+r.
\]

For (ii), let $F\subseteq\Nzero$ be finite and let $q$ be a nonprincipal ultrafilter.  For every
$n\in\Nzero$, the set $F-n$ is finite, so $F-n\notin q$.  Hence
$F\notin p+q$ by \eqref{eq:beta-add}.  Thus $p+q$ contains no finite set and
is a nonprincipal ultrafilter.  If $p$ is a nonprincipal ultrafilter and
$F\in D_kp$ were finite, then
$\{n:kn\in F\}\in p$ by \eqref{eq:Dk-member}, although this set is finite,
a contradiction.  Hence $D_kp$ is a nonprincipal ultrafilter.

For (iii), the ultrafilter $q$ contains exactly one of the residue classes
$k\Nzero+\varepsilon$, $0\leq\varepsilon<k$. Define an
ultrafilter $p$ on $\Nzero$ by
\[
   B\in p
   \quad\Longleftrightarrow\quad
   \{kn+\varepsilon:n\in B\}\in q
   \qquad(B\subseteq\Nzero).
\]
This is an ultrafilter because $n\mapsto kn+\varepsilon$ is a bijection from
$\Nzero$ onto the $q$-large set $k\Nzero+\varepsilon$. Since that residue
class belongs to $q$, for every $A\subseteq\Nzero$,
\[
   A\in q
   \quad\Longleftrightarrow\quad
   \{n:kn+\varepsilon\in A\}\in p.
\]
Together with \eqref{eq:Dk-member}, this gives
$q=\varepsilon+D_kp$. If $p$ were principal, then so would be $q$, so
$p\in\Nstar$.

For uniqueness, $\varepsilon$ is determined uniquely by which residue class
belongs to $q$.  Once $\varepsilon$ is fixed, the preceding formula gives,
for every $B\subseteq\Nzero$,
\[
   B\in p
   \quad\Longleftrightarrow\quad
   \{kn+\varepsilon:n\in B\}\in q,
\]
so it also determines $p$ uniquely.
\end{proof}

The following standard result in topological dynamics follows, for example,
from \cite[pp.~38--40]{Blass}. We include a proof for completeness.

\begin{lemma}\label{lem:omega-ultrafilter}
	For every topological dynamical system $(X,T)$ and every $x\in X$,
	$\omega(x)$ is a subsystem and
	\begin{equation}\label{eq:omega-ultrafilter}
	\omega(x)=\{px:p\in\Nstar\}.
	\end{equation}
\end{lemma}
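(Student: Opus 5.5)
The subsystem property was already noted after \eqref{eq:omega-def}: both $T\omega(x)\subseteq\omega(x)$ and $T^{-1}\omega(x)\subseteq\omega(x)$, and $\omega(x)$ is closed. It is also nonempty, being a decreasing intersection of nonempty compact sets. The content of the lemma is therefore the identity \eqref{eq:omega-ultrafilter}, which I would prove by two inclusions directly from the definition of $p$-limits.

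\emph{The inclusion $\{px:p\in\Nstar\}\subseteq\omega(x)$.} Let $p\in\Nstar$, put $y=px$, and fix $N\ge 0$. Let $U$ be any open neighborhood of $y$. By definition of the $p$-limit, $\{n:T^nx\in U\}\in p$. Since $p$ is nonprincipal, it contains no finite set, so it contains the cofinite set $\{n:n\ge N\}$. Intersecting the two, some $n\ge N$ satisfies $T^nx\in U$. As $U$ was arbitrary, $y\in\overline{\{T^nx:n\ge N\}}$. Since $N$ was arbitrary, $y\in\omega(x)$.

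\emph{The inclusion $\omega(x)\subseteq\{px:p\in\Nstar\}$.} Let $y\in\omega(x)$ and let $\mathcal U$ be the family of open neighborhoods of $y$. For $U\in\mathcal U$ and $N\ge0$, set $A_{U,N}=\{n\ge N:T^nx\in U\}$; each is nonempty because $y\in\omega(x)$. The family $\{A_{U,N}\}$ has the finite intersection property, since $A_{U,N}\cap A_{U',N'}\supseteq A_{U\cap U',\max(N,N')}$. Extend it to an ultrafilter $p$. Then $p$ is nonprincipal, because every $n$ is excluded from $A_{U,n+1}$, so $p$ contains no singleton. Moreover $\{n:T^nx\in U\}\supseteq A_{U,0}\in p$ for every $U\in\mathcal U$. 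Since $X$ is Hausdorff, the $p$-limit is unique, so $px=y$.

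There is no real obstacle here. The only points needing care are the finite intersection property of the family $\{A_{U,N}\}$, which is why open sets and tail indices are combined into a single family, and the verification that $p$ is nonprincipal. Both are routine.
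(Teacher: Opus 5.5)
Your proposal is correct and follows essentially the same route as the paper: the inclusion $\{px:p\in\Nstar\}\subseteq\omega(x)$ because every tail $\{n\ge N\}$ lies in a nonprincipal ultrafilter, and the reverse inclusion by extending a family with the finite intersection property to a nonprincipal ultrafilter $p$ with $px=y$. Your combined sets $A_{U,N}$ are just a repackaging of the paper's family $\{N_U\}\cup\{\Nzero\setminus F\}$. The only other difference is that you cite the earlier remark for backward invariance, which the paper's proof re-derives via $T^{-1}y=\lim_i T^{n_i-1}x$.
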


\begin{proof}
	If $y=\lim_iT^{n_i}x\in\omega(x)$ with $n_i\to\infty$, then
	$T^{-1}y=\lim_iT^{n_i-1}x\in\omega(x)$. Together with forward
	invariance, this shows that $\omega(x)$ is a subsystem. Let
	$p\in\Nstar$. Then for any $N$, $\{n:n\geq N\}\in p$. Therefore, by \eqref{eq:omega-def},
	$px\in\omega(x)$.
	
	Conversely, let $y\in\omega(x)$.  For each nonempty open neighborhood $U$ of $y$, put
	\[
	N_U=\{n\in\Nzero:T^nx\in U\}.
	\]
	Let $U_1,\ldots,U_k$ be nonempty open neighborhoods of $y$ and let
	$F\subset\Nzero$ be nonempty and finite.  Set
	$U=\bigcap_{i=1}^k U_i$ and $N=1+\max(F\cup\{0\})$.  Since
	$y\in\overline{\{T^nx:n\geq N\}}$, there is $n\geq N$ such that
	$T^nx\in U$.  Hence,
	\[
	n\in\bigcap_{i=1}^kN_{U_i}\cap(\Nzero\setminus F).
	\]
	Thus the family
	$
	\{N_U:U\text{ is a neighborhood of }y\}
	\cup
	\{\Nzero\setminus F:F\subset\Nzero\text{ finite}\}
	$
	has the finite-intersection property.  Extend it to an ultrafilter $p$.
	Then $p\in\Nstar$.
	Since $N_U\in p$ for every nonempty open neighborhood $U$ of $y$, $px=y$.
	This completes the proof.
\end{proof}

\subsection{Zero-dimensional separation and coding}

The following two elementary separation and coding lemmas will be used in
both proofs of the main results.

\begin{lemma}\label{lem:clopen-domain}
	Let $K$ be a compact zero-dimensional Hausdorff space, and let
	$F:K\to K$ be a continuous involution without fixed points.  If a compact set
	$M\subseteq K$ satisfies $M\cap FM=\emptyset$, then there is a clopen set
	$H\subseteq K$ such that $M\subseteq H$ and
	\begin{equation}\label{eq:clopen-domain}
	K=H\sqcup FH.
	\end{equation}
In particular, taking $M=\emptyset$ gives a clopen fundamental domain for
$J$.
\end{lemma}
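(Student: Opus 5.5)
We separate $M$ from $FM$ by a clopen set using zero-dimensionality and compactness, and then make the partition of $K$ exact by a finite inductive enlargement.

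\emph{Initial separation.} Since $M$ and $FM$ are disjoint compact subsets of a compact zero-dimensional Hausdorff space, there is a clopen $H_0\supseteq M$ with $H_0\cap FM=\emptyset$. To see this, cover $M$ by finitely many basic clopen sets missing $FM$ (possible since each point of $M$ has a clopen neighbourhood avoiding the closed set $FM$) and take their union. Put $H_1=H_0\setminus FH_0$. This is clopen because $F$ is a homeomorphism, being its own inverse. We have $M\subseteq H_1$, since $M\cap FH_0=F(FM\cap H_0)=\emptyset$. Finally $H_1\cap FH_1=\emptyset$, because $FH_1=FH_0\setminus H_0$.

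\emph{Covering the rest of $K$.} Each point $x\in K$ satisfies $x\neq Fx$, since $F$ has no fixed points. So $x$ has a clopen neighbourhood $U_x$ with $U_x\cap FU_x=\emptyset$: choose a clopen $V\ni x$ with $Fx\notin V$ and set $U_x=V\setminus F V$, which still contains $x$ because $x\notin FV$. By compactness, $K$ is covered by $H_1$ together with finitely many such sets $U_1,\dots,U_n$, each clopen with $U_j\cap FU_j=\emptyset$.

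\emph{Inductive enlargement.} Define recursively
\[
H_{j+1}=H_j\cup\bigl(U_j\setminus(H_j\cup FH_j)\bigr).
\]
Each $H_{j+1}$ is clopen and contains $H_j$. We need $H_{j+1}\cap FH_{j+1}=\emptyset$. Write $W=U_j\setminus(H_j\cup FH_j)$; the set $W$ is disjoint from $H_j\cup FH_j$, and $FW\subseteq FU_j$ is disjoint from $U_j\supseteq W$. Hence $H_j\cap FH_j=\emptyset$ by induction. Also $H_j\cap FW=F(FH_j\cap W)=\emptyset$, then $W\cap FH_j=\emptyset$, and $W\cap FW=\emptyset$. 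The invariant to track is that $H_j\cup FH_j$ contains $U_1\cup\dots\cup U_{j-1}$ together with $H_1$. This holds because every point of $U_j$ lies in $H_j\cup FH_j$ or in $W\subseteq H_{j+1}$.

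\emph{Conclusion.} Take $H=H_{n+1}$. It is clopen, contains $M$, satisfies $H\cap FH=\emptyset$, and $H\cup FH\supseteq H_1\cup U_1\cup\dots\cup U_n=K$. Therefore $K=H\sqcup FH$. For the special case, take $M=\emptyset$, which gives a clopen fundamental domain for the involution ($J$ in the statement denotes $F$).

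The only point needing care is the bookkeeping in the inductive step: the added piece must avoid both $H_j$ and its image, and must also be disjoint from its own image.
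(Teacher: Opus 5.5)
Your proof is correct and is essentially the paper's argument: your $H_1=H_0\setminus FH_0$ is exactly the paper's $U=W\cap F(K\setminus W)$. Since $H_j\cup FH_j=H_1\cup FH_1\cup\bigcup_{i<j}(U_i\cup FU_i)$, the piece $U_j\setminus(H_j\cup FH_j)$ you add at each step coincides with the paper's $P_i\cap V_i$, obtained by disjointifying the $F$-saturated sets $V_i\cup FV_i$; you simply organize that disjointification as an induction.
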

\begin{proof}
	For each $x\in M$, choose a clopen neighborhood $W_x$ disjoint from $FM$.
	A finite union of these neighborhoods gives a clopen set $W$ with
	$M\subseteq W$ and $W\cap FM=\emptyset$.  Put
	$
	U=W\cap F(K\setminus W).
	$
	If $x\in M$, then $Fx\in FM\subseteq K\setminus W$. So,
	$x\in F(K\setminus W)$. Hence, $M\subseteq U$.  Also $U\subseteq W$ and
	$FU\subseteq K\setminus W$. So, $U\cap FU=\emptyset$.
	
	Let
	$
	K_0=K\setminus(U\cup FU).
	$
	Then $K_0$ is compact, clopen, and $F$-invariant.  Fix $x\in K_0$.  Choose
	disjoint open sets $O,O'$ with $x\in O$ and $Fx\in O'$, and put
	$V=O\cap F(O')$.  Then $x\in V$ and $V\cap FV=\emptyset$. Choose a relatively clopen set $V_x\subseteq K_0$ with
	$x\in V_x\subseteq V$. Then $V_x\cap FV_x=\emptyset$.
	
	By compactness, finitely many sets $W_i=V_i\cup FV_i$ cover $K_0$.  Each
	$W_i$ is clopen and $F$-invariant. Let
	\[
	P_1=W_1\ \text{and for each}\ i>1,\ \text{put}\
	P_i=W_i\setminus\bigcup_{j<i}W_j
	\]
	and discard the empty terms.  The sets $P_i$ form a finite pairwise disjoint
	clopen $F$-invariant partition of $K_0$.  Put $H_i=P_i\cap V_i$.  Since
	$P_i\subseteq V_i\cup FV_i$, $V_i\cap FV_i=\emptyset$, and $FP_i=P_i$,
	$
	P_i=H_i\sqcup FH_i.
	$
	Finally, set
	$
	H=U\cup\bigcup_iH_i.
	$
	Then $H$ is clopen, contains $M$, and $K=H\sqcup FH$. This completes the proof.
\end{proof}

Let $(X,T)$ be a topological dynamical system, let $H\subset X$ be clopen. 
Define
\begin{equation*}
\pi_H:X\to\two^{\Nzero},\quad
\pi_H(x)(n)=1_H(T^nx).
\end{equation*}
The map $\pi_F$ is continuous and equivariant. 

\begin{lemma}\label{lem:asymptotic-binary-coding}
Let $(X,T)$ be a topological dynamical system, let $x\in X$, and put
$\Omega_x=\omega(x)$. Let $J:\Omega_x\to\Omega_x$ be a continuous
involution commuting with $T$. Suppose $H\subset\Omega_x$ is clopen and
\[
 \Omega_x=H\sqcup JH.
\]
Then there is a binary sequence $h:\Nzero\to\{0,1\}$ such that, for every
$p\in\Nstar$,
\begin{equation}\label{eq:asymptotic-code}
 ph=\pi_H(px),
 \qquad
 \pi_H(y)(n)=1_H(T^ny).
\end{equation}
Here $ph$ denotes the forward ultrafilter translate of the sequence $h$:
\[
 (ph)(n)=p\!\operatorname{-lim}_{k}h(k+n)
 \qquad(n\in\Nzero).
\]
Equivalently, $(ph)(n)=\gamma$ if and only if
$\{k\in\Nzero:h(k+n)=\gamma\}\in p$.
Thus both $ph$ and $\pi_H(px)$ in \eqref{eq:asymptotic-code} are elements
of $\{0,1\}^{\Nzero}$. On this sequence space, $J$ denotes coordinatewise
complementation.
Consequently, if $u,v\in\Nstar$ and $ux=J(vx)$, then
$uh=J(vh)$. If a minimal subsystem $L\subset\Omega_x$ is contained in
$H$, then $h^{-1}(1)\cap\N$ is thick in $\N$, with blocks occurring
arbitrarily far to the right.
\end{lemma}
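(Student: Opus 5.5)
The plan is to construct $h$ directly from the orbit of $x$ by separating $H$ and $JH$ with disjoint open sets of $X$. We cannot simply take $h(n)=1_H(T^nx)$, because $H$ is clopen only in $\Omega_x$ and $X$ need not be zero-dimensional. However, $H$ and $JH$ are disjoint compact subsets of the compact metric space $X$. So we choose disjoint open sets $O,O'\subseteq X$ with $H\subseteq O$ and $JH\subseteq O'$, and define
\[
 h(n)=1 \ \text{ if } T^nx\in O,\qquad h(n)=0 \ \text{ otherwise}.
\]

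To verify \eqref{eq:asymptotic-code}, fix $p\in\Nstar$ and $n\in\Nzero$. By \cref{lem:omega-ultrafilter}, $px\in\Omega_x$, and $\Omega_x$ is $T$-invariant, so $T^n(px)\in\Omega_x=H\sqcup JH$. Continuity of $T^n$ gives $T^n(px)=p\!\operatorname{-lim}_kT^{k+n}x$.

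If $T^n(px)\in H$, then $O$ is an open neighbourhood of $T^n(px)$. Hence $\{k:T^{k+n}x\in O\}\in p$, that is, $\{k:h(k+n)=1\}\in p$, so $(ph)(n)=1=\pi_H(px)(n)$.

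If $T^n(px)\in JH$, then similarly $\{k:T^{k+n}x\in O'\}\in p$. Since $O\cap O'=\emptyset$, this set is contained in $\{k:h(k+n)=0\}$, so $(ph)(n)=0=\pi_H(px)(n)$.

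This case split is the only real point: it needs just the disjointness of $O$ and $O'$ together with the partition $\Omega_x=H\sqcup JH$. No asymptotic estimate on the orbit of $x$ is needed.

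For the consequence, suppose $ux=J(vx)$. Since $J$ commutes with $T$, we have $T^n(ux)=J(T^n vx)$. Because $\Omega_x=H\sqcup JH$ and $J$ is an involution, $1_H(Jy)=1_{JH}(y)=1-1_H(y)$ for $y\in\Omega_x$. Hence $\pi_H(ux)$ is the coordinatewise complement of $\pi_H(vx)$, and applying \eqref{eq:asymptotic-code} to $u$ and $v$ gives $uh=J(vh)$.

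For the thickness statement, let $L\subseteq H$ be minimal and pick $y\in L$. By \cref{lem:omega-ultrafilter} there is $p\in\Nstar$ with $px=y$. Since $L$ is $T$-invariant, $T^jy\in L\subseteq H$ for every $j\geq0$, so $(ph)(j)=1$ for all $j$. Fix $N$. Then
\[
 \bigcap_{j=0}^N\{k:h(k+j)=1\}\in p.
\]
This set is infinite because $p$ is nonprincipal. Thus for infinitely many $k$, the interval $[k,k+N]$ lies in $h^{-1}(1)$, and discarding $k=0$ places it inside $\N$. Since $N$ is arbitrary, $h^{-1}(1)\cap\N$ is thick, with blocks occurring arbitrarily far to the right.
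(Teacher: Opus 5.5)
Your proposal is correct and essentially follows the paper's proof. The paper takes its open set to be $\{F>1/2\}$ for a Tietze extension $F$ of $1_H$, whereas you separate the disjoint compact sets $H$ and $JH$ by disjoint open sets; either way one gets an open $O\subseteq X$ with $O\cap\Omega_x=H$ and $\partial O\cap\Omega_x=\emptyset$, which yields $ph=\pi_H(px)$. The only other difference is cosmetic: you obtain thickness from the coding identity via some $p\in\Nstar$ with $px=y$, while the paper argues directly that $T^nx$ enters $\bigcap_{j=0}^{R}T^{-j}\mathcal O$ for arbitrarily large $n$.
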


\begin{proof}
The function $1_H$ is continuous on the closed subset $\Omega_x$. Extend it
to a continuous $F:X\to[0,1]$ and put
$\mathcal O=\{z:F(z)>1/2\}$. Since $F$ takes only the values $0$ and $1$ on
$\Omega_x$,
\[
 \mathcal O\cap\Omega_x=H,
 \qquad
 \partial\mathcal O\cap\Omega_x=\emptyset.
\]
Define $h(n)=1_{\mathcal O}(T^nx)$. For $p\in\Nstar$, one has
$px\in\Omega_x$. The indicator $1_{\mathcal O}$ is continuous at every point
of $\Omega_x$, so for each $n\in\Nzero$,
\[
 (ph)(n)
 =p\text{-}\lim_k1_{\mathcal O}(T^{k+n}x)
 =1_H(T^n(px)).
\]
This proves \eqref{eq:asymptotic-code}. If $ux=J(vx)$, then
\[
 uh=\pi_H(ux)=\pi_H(J(vx))=J\pi_H(vx)=J(vh).
\]
Finally, let $L\subset H$ be minimal. Given $R,N\in\N$ and $y\in L$, the
open set
\[
 \bigcap_{j=0}^{R}T^{-j}\mathcal O
\]
contains $y$, because $T^jL=L\subset H$. Since $y\in\omega(x)$, some
$n\ge N$ satisfies $T^nx$ in this intersection. Hence
$h(n)=\cdots=h(n+R)=1$.
\end{proof}

\begin{lemma}
\label{lem:equivariant-circle-extension}
Let $K$ be a compact zero-dimensional metrizable space, let $J:K\to K$ be a continuous involution without fixed points, and let
$E\subset K$ be closed and $J$-invariant. If $\psi:E\to\Torus$ is continuous
and
\[
 \psi(Jx)=\psi(x)+\tfrac12\qquad(x\in E),
\]
then there is a continuous extension $\bar\psi:K\to\Torus$ satisfying
\[
 \bar\psi(Jx)=\bar\psi(x)+\tfrac12\qquad(x\in K).
\]
\end{lemma}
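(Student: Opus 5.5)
The plan is to reduce to the case of a clopen fundamental domain and then glue, using the zero-dimensionality of $K$. First apply \cref{lem:clopen-domain} with $F=J$ and $M=\emptyset$ to obtain a clopen $H\subseteq K$ with $K=H\sqcup JH$. The equivariance condition $\bar\psi(Jx)=\bar\psi(x)+\tfrac12$ forces $\bar\psi$ on $JH$ once $\bar\psi|_H$ is chosen. It therefore suffices to find a continuous $\phi:H\to\Torus$ that extends $\psi|_{E\cap H}$, and then to set
\[
 \bar\psi(x)=\phi(x)\ (x\in H),\qquad \bar\psi(x)=\phi(Jx)+\tfrac12\ (x\in JH).
\]
Since $H$ and $JH$ are clopen and disjoint, this $\bar\psi$ is continuous. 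It is equivariant because $J^2=\mathrm{id}$ and $\tfrac12+\tfrac12=0$ in $\Torus$. It agrees with $\psi$ on $E$: on $E\cap H$ this is immediate, and for $x\in E\cap JH$ we have $Jx\in E\cap H$ by $J$-invariance of $E$, so $\bar\psi(x)=\psi(Jx)+\tfrac12=\psi(x)+\tfrac12+\tfrac12=\psi(x)$.

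The remaining step, which is the only nontrivial one, is to extend a continuous map from a closed subset $E_0=E\cap H$ of the compact zero-dimensional metrizable space $H$ into $\Torus$. Tietze's theorem does not apply directly because $\Torus$ is not an interval, and a general extension to the circle could be obstructed by topology. Zero-dimensionality removes this obstruction.

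I would handle it by using a retraction. A compact metrizable zero-dimensional space admits a continuous retraction $r:H\to E_0$ when $E_0\neq\emptyset$, since every nonempty closed subset of such a space is a retract. One way to build $r$ is to embed $H$ in the Cantor set $\{0,1\}^{\N}$ and take a nearest-point map for a suitable ultrametric; for a compact ultrametric space, ties can be broken clopen-wise. Then $\phi=\psi\circ r$ is the desired extension. If $E_0=\emptyset$, take $\phi$ constant.

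Alternatively, if one prefers to avoid the retract fact, I would proceed by an elementary approximation. By uniform continuity, for each $k$ one can cover $E_0$ by finitely many disjoint clopen sets of $H$ on which $\psi$ varies by less than $2^{-k}$. Assigning constant values on these sets, and on a clopen complement, gives locally constant maps. Lifting successive corrections to $\mathbb R$ (each is small, hence has a canonical small real lift) produces a uniformly convergent series whose limit extends $\psi$.

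Either way, the main obstacle is this extension into $\Torus$ on a zero-dimensional space; everything else is bookkeeping with the fundamental domain.
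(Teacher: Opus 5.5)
Your proposal is correct. Its outer layer is exactly the paper's argument: a clopen fundamental domain from \cref{lem:clopen-domain} with $M=\emptyset$, an extension on that domain, and the forced definition $\bar\psi(x)=\phi(Jx)+\tfrac12$ on the other half, including the check on $E\cap JH$.

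The difference lies in the core extension step on the fundamental domain.

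The paper identifies $\Torus$ with the unit circle in $\mathbb C$ and uses Tietze to extend the two real coordinates of $\psi|_{E\cap D}$ to some $f:D\to\mathbb C$. It then uses zero-dimensionality only once, to find a clopen $D_0$ with $E\cap D\subset D_0\subset\{|f|>1/2\}$. Finally it sets $\varphi=f/|f|$ on $D_0$ and $\varphi\equiv1$ on $D\setminus D_0$. So it exploits the fact that the circle is a neighborhood retract of $\mathbb C$, and the clopen cutoff sidesteps any zeros of $f$ away from $E\cap D$.

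You instead invoke the retraction theorem for compact zero-dimensional metrizable spaces and put $\phi=\psi\circ r$. Your route is more general, since it extends maps into an arbitrary target rather than only into a neighborhood retract of Euclidean space. However, it rests on a fact heavier than Tietze, which must be proved or cited. Your ultrametric sketch does work once the tie-breaking is made explicit. For example, after embedding $H$ in $\two^{\N}$, send $x\notin E_0$ to the lexicographically least point of $E_0$ in the longest cylinder around $x$ that meets $E_0$. This map is locally constant off $E_0$ and continuous at points of $E_0$. The paper's route is shorter and self-contained.

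Your second, approximation-based alternative needs more care than you state. The corrections $\phi_{k+1}-\phi_k$ must be small on all of $H$, not only on $E_0$. That requires the clopen partitions to be nested, with the correction taken to be zero on pieces missing $E_0$. Also, no real lifts are needed there, since $\Torus$ is complete.
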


\begin{proof}
Apply \cref{lem:clopen-domain} with $M=\emptyset$ and choose
a clopen fundamental domain $D$. Put $E_D=E\cap D$. Identify $\Torus$ with
the unit circle in $\mathbb C$, while retaining additive notation for its
group law. By the Tietze extension theorem,
the two real coordinates of $\psi|_{E_D}$ extend to a continuous map
$f:D\to\mathbb C$. Since $|f|=1$ on $E_D$, the open set
$\{x\in D:|f(x)|>1/2\}$ contains $E_D$. Zero-dimensionality and compactness
give a clopen set $D_0$ with
\[
 E_D\subset D_0\subset\{x\in D:|f(x)|>1/2\}.
\]
Define
\[
 \varphi(x)=\frac{f(x)}{|f(x)|}\quad(x\in D_0),
 \qquad
 \varphi(x)=1\quad(x\in D\setminus D_0).
\]
Then $\varphi:D\to\Torus$ is continuous and extends $\psi|_{E_D}$. Define
\[
 \bar\psi(x)=\varphi(x)\quad(x\in D),
 \qquad
 \bar\psi(x)=\varphi(Jx)+\tfrac12\quad(x\in JD).
\]
The two definitions are made on disjoint clopen sets. If $x\in E\cap JD$,
write $x=Jy$ with $y\in E_D$; then
$\bar\psi(x)=\psi(y)+1/2=\psi(Jy)=\psi(x)$. Thus $\bar\psi$ is the required
extension.
\end{proof}

\subsection{Admissible finite partitions}

Hindman~\cite[Definition~2.2]{Neil79}
introduced admissible partitions, requiring one cell to contain arbitrarily
long arithmetic progressions of even integers with a fixed common
difference. That is,

\begin{defn}\cite[Definition~2.2]{Neil79}\label{defn-hindman}
A partition $\N=C_1\sqcup \cdots \sqcup C_r$  is {\em admissible} if there exist $i\in \{1,\ldots,r\}$ and $d\in \N$ such that, for each $n\in \N$, there  is an even integer $x\in \N$ with $\{x+kd:0\le k\le n\}\subseteq C_i$.
\end{defn}

A special case of an admissible partition is one which has some cell
including arbitrarily long blocks, i.e. it is thick.

Hindman proved that every admissible two-cell partition contains
$B+B$ for some infinite $B$. That is,

\begin{thm}\cite[Corollary~2.10]{Neil79}\label{thm:Hindman-even-ap}
	Let $\N=C_1\sqcup C_2$ be an admissible partition.  Then there exist $i\in \{1,2\}$ and an infinite $B\subseteq\N$ such that
	$B+B\subseteq C_i$.
\end{thm}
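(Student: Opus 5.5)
The plan is an inductive Ramsey argument that exploits the arithmetic progressions in the admissible cell. Fix $i$ and $d$ as in \cref{defn-hindman}, say $i=1$. For each $n$ pick an even $x_n$ with $\{x_n+kd:0\le k\le 2n\}\subseteq C_1$, chosen so that the $x_n$ grow very rapidly. Put
\[
y_{n,k}=\tfrac{x_n}{2}+kd\qquad(0\le k\le n).
\]
Then $y_{n,k}+y_{n,j}=x_n+(k+j)d\in C_1$ for all $k,j\le n$, including $k=j$. So each ``block'' $Y_n=\{y_{n,k}:k\le n\}$ satisfies $Y_n+Y_n\subseteq C_1$, and in particular $2y\in C_1$ for every $y$ in the union of the blocks. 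The key point is that the diagonal terms $2y$, which are exactly what defeats Ramsey-type arguments for $B+B$, are automatically controlled in one colour. The whole difficulty is therefore moved to the cross sums $y_{n,k}+y_{m,j}$ with $n\ne m$.

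Next I would apply a canonical/product Ramsey argument to the cross sums. Using the growth of the $x_n$ and an ultrafilter (or iterated pigeonhole) on the indices, I would pass to an infinite set of blocks $n_1<n_2<\cdots$ and, inside each block, to large sub-blocks of indices $k$. On these, the colour of $y_{n_s,k}+y_{n_t,j}$ for $s<t$ should depend only on a bounded amount of data; ideally it is constant, or depends only on $j$, or only on $k$. If some selection makes every cross sum lie in $C_1$, then choosing one element from each selected block gives an infinite $B$ with $B+B\subseteq C_1$, and we are done.

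The remaining case, where the cross sums are forced into $C_2$, is the main obstacle, because the diagonal sums $2y$ lie in $C_1$. Here I would change the set: instead of $B\subseteq\bigcup Y_n$, take $B'$ made of elements of the form $z=y_{n_s,k}+c$ for a suitable fixed shift $c$, or of ``midpoints'' $z_{s,t}$ with $2z_{s,t}$ equal to a cross sum. The aim is that $2z$ lands among the cross sums, which are in $C_2$. Each sum $z+z'$ should then be a cross sum, or a cross sum translated by a multiple of $d$ that stays inside a long $C_2$-homogeneous family. The length $2n$ of the progressions, rather than $n$, is chosen to leave room for exactly such shifts. Making this consistent is the delicate bookkeeping. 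I expect to need an iterated dichotomy: at each stage, either the current finite $B$ extends inside $C_1$, or a uniformly large set of witnesses forces $C_2$ on a whole new progression-like family. Two colours ensure that the failure of the $C_1$-extension is itself positive information for $C_2$, which is exactly where the argument must break for three colours, as Hindman's counterexample shows. A diagonal compactness argument across the stages then produces the infinite monochromatic $B+B$.
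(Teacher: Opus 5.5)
\textbf{There is a genuine gap.}

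Your opening step is correct but only repackages admissibility. The blocks $Y_n=\{x_n/2+kd:0\le k\le n\}$ satisfy $Y_n+Y_n\subseteq C_1$, and if a Ramsey selection of one element per block has all cross sums in $C_1$, you are done.

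The gap is the other case: a sequence $b_1<b_2<\cdots$ with $2b_s\in C_1$ and $b_s+b_t\in C_2$ for $s<t$. This is not a leftover technicality; it carries the entire content of the theorem. The proposal gives no argument for it beyond the hope of an ``iterated dichotomy'' and a ``diagonal compactness argument''.

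The one concrete device you name fails. If $z=(u+v)/2$ and $z'=(u'+v')/2$ are midpoints of cross sums, then indeed $2z,2z'\in C_2$. But $z+z'=(u+v+u'+v')/2$ is neither a cross sum nor a $d$-translate of one. For instance, take $u$ fixed, $v\in Y_{n_t}$, $v'\in Y_{n_{t'}}$ with $t<t'$ and rapidly growing $x_n$. Then $z+z'$ is roughly $x_{n_{t'}}/4$, far from every set $Y_n+Y_{n'}$, so nothing controls its colour. Shifting by a constant does not help either: the doubles either stay inside the $C_1$-progressions or are uncontrolled. The extra length $2n$ of the progressions plays no role.

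The paper cites Hindman for this statement rather than proving it directly. However, \cref{appA} proves the generalization \cref{thm2-1}, which yields the statement for $m=\ell=1$ after replacing $d$ by $2d$. That proof is by contradiction, assuming neither colour has a witness, and it rests on a local forcing lemma that your plan lacks.

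After rescaling so that colour $0$ is thick, \cref{lem:local} says the following. If colour $j$ has no witness, there is $C_j$ such that whenever at most one of the even numbers $2K,\dots,2(K+M-1)$ has colour $1-j$, every $x>C_j$ with $2K<x<2(K+M)-C_j$ has $c(2x)=1-j$. The reason: a rapidly increasing sequence of violations would put all cross sums $x_t+x_r$ inside such nearly monochromatic blocks. Ramsey makes these cross sums monochromatic, they cannot have colour $j$, so two of them would have colour $1-j$ in a single block. This is exactly where having only two colours is used.

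\cref{prop:thick} then alternates between the colours:
\begin{enumerate}
\item A colour-$0$ block $Q_n$ yields ``good'' integers $a$ just above $2K_n$, meaning at most one of $2a,\dots,2(a+L-1)$ has colour $0$.
\item Let $a_n$ be the largest good integer below $K_{n+1}$. The lemma yields $z_n$ just above $2a_n$ with $2(z_n+s)$ of colour $0$ for $s<L$.
\item Ramsey forces the cross sums $(z_r+s)+(z_t+s)$ into colour $1$, since otherwise colour $0$ has a witness.
\item Hence the midpoints $\alpha_n=(z_*+z_n)/2$ are good and exceed $a_n$, so $\alpha_n\ge K_{n+1}$ by maximality.
\item But $\alpha_n<K_{n+1}+D$ with $D$ fixed, and $M_{n+1}\to\infty$. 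So for large $n$ the numbers $2\alpha_n,\dots,2(\alpha_n+L-1)$, all of colour $1$, lie in the colour-$0$ block $Q_{n+1}$, a contradiction.
\end{enumerate}

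So midpoints do enter the paper's argument, but only as a device inside an extremal contradiction, not as a new witness set. Without the forcing lemma and the extremal choice of $a_n$, your plan has no mechanism to close the second case.
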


We generalize Definition \cref{defn-hindman} as follows.

\begin{defn}\label{defn4-1}
	Fix $(m,\ell)\in\N^2$. Let
	$\N=C_1\sqcup\cdots\sqcup C_r$ be an $r$-coloring. We say that it is
	{\em $(m,\ell)$-admissible} if there are $i\in\{1,\ldots,r\}$ and
	$d\in(m+\ell)\N$ such that, for every $n\in\N$, there is $x\in\N$
	satisfying
	\[
	(m+\ell)x,(m+\ell)x+d,\ldots,(m+\ell)x+nd\in C_i.
	\]
\end{defn}

We will need the following generalization of \cref{thm:Hindman-even-ap} in the proof of \cref{main2}. 

\begin{thm}\label{thm2-1}
	Fix $(m,\ell)\in\N^2$. Let
	$\N=C_1\sqcup C_2$ be an $(m,\ell)$-admissible $2$-coloring. Then
	there is an infinite $B\subseteq\N$ such that
	\[
	(m+\ell)B\cup\{mx+\ell y:x,y\in B,\ x<y\}
	\]
	is monochromatic.
\end{thm}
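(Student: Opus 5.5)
Throughout, write $s=m+\ell$ and $d=se$ with $e\in\N$. Let $C_1$ be the cell given by \cref{defn4-1}. Put
\[
P=\{y\in\N:sy\in C_1\}.
\]
Then $P$ contains arbitrarily long arithmetic progressions $x,x+e,\ldots,x+ne$.

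\textbf{Step 1: an ultrafilter carrying all the progressions.} For $K\in\N$ let
\[
P_K=\{x:x+ae\in P\ \text{for all }0\le a\le K\}.
\]
The sets $P_K$ are nested, and each is infinite, since arbitrarily long progressions can be taken arbitrarily far out (a long progression contains long sub-progressions starting arbitrarily late). So there is $p\in\Nstar$ containing every $P_K$. For every $a\in\Nzero$ the set $\{x:s(x+ae)\in C_1\}$ then belongs to $p$. This is the analogue of Hindman's use of even progressions in \cref{thm:Hindman-even-ap}. The diagonal part $(m+\ell)B$ will be handled by choosing each element of $B$ $p$-generically, in the form $b=x+ae$.

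\textbf{Step 2: reduction of the cross terms to a coloring of $\Nzero$.} Suppose $b_i=x_i+a_ie$ and $b_j=x_j+a_je$ with $i<j$, where $x_i$ and then $x_j$ are chosen $p$-generically. Then
\[
mb_i+\ell b_j=mx_i+\ell x_j+e(ma_i+\ell a_j).
\]
Its color is governed by the ultrafilter $D_mp+D_\ell p+e t$ with $t=ma_i+\ell a_j$. Here I use \eqref{eq:Dk-member} and \eqref{eq:Dk-hom}; iterated $p$-choices are realised by the usual inductive construction, shrinking to $p$-large sets at each stage, as in the proof of Hindman's theorem. Define $\chi(t)\in\{1,2\}$ by
\[
\chi(t)=1\iff C_1\in D_mp+D_\ell p+et .
\]
If there is an infinite $A=\{a_1<a_2<\cdots\}$ with $\chi(ma_i+\ell a_j)=1$ for all $i<j$, then the inductive choice of the $x_j$ yields the required $B$ in color $1$. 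The simplest instance is $\chi(0)=1$, where we take all $a_j=0$.

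\textbf{Step 3: the remaining case, where color $2$ dominates the cross terms.} This is the main obstacle. Here the diagonal lands in $C_1$ while the cross terms tend to land in $C_2$, so the configuration above does not suffice. I would try first to exploit the freedom in the pair $(p,t)$. Replacing $p$ by $p+ae$, which still contains all the $P_K$ by Step 1, shifts $t$ by $sa$. Passing to an idempotent-like or minimal element of the closed set of such $p$ should let me assume that $\chi$ is invariant under these shifts, so that $\chi$ is constant on residue classes mod $s$ along the progression.

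If $\chi\equiv2$ on the relevant classes, I would aim to switch colors and build $B$ in $C_2$. Choose the elements of $B$ from a second ultrafilter $q=D_mp+D_\ell p$-type point, for which both $sb$ and the cross terms $mb_i+\ell b_j$ fall into sets of the form $D_\bullet p+D_\bullet p$ colored $2$. This mirrors the case split in Hindman's proof of \cref{thm:Hindman-even-ap}: either the progression color propagates to the cross sums, or the complementary color must contain the full configuration built from the sums.

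Verifying that the diagonal $sB$ is also colored $2$ in this second case, by a careful choice of which ultrafilter combination generates $B$, is where I expect the real work to lie.
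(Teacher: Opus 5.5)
Your proposal has a genuine gap. Step~3, which you flag as the main obstacle, is where the entire theorem lives, and it is only a plan. The easy case is in fact excluded outright: by \cref{lem:weighted-selection}, in any counterexample and for every $p\in\Nstar$, the ultrafilters $D_mp+D_\ell p$ and $D_{m+\ell}p$ contain opposite colour classes. So $\{x:(m+\ell)x\in C_1\}\in p$ forces $C_2\in D_mp+D_\ell p$, i.e.\ $\chi(0)=2$ always. The same alternation shows why switching colours cannot work as described. Choose $q$ with $D_{m+\ell}q=D_mp+D_\ell p$, which is possible once $p$ lives on one residue class modulo $m+\ell$. Then the diagonal of $q$ lies in $C_2$, but the alternation forces $C_1\in D_mq+D_\ell q$, and you are back where you started. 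The alternation is consistent for each single ultrafilter, so choosing cleverer ultrafilters cannot by itself produce a contradiction. The long progressions must be used more quantitatively than merely to place diagonal terms. The proposed periodicity of $\chi$ via an ``idempotent-like'' $p$ is also unsupported: $p\mapsto p+e$ has no fixed points, and nothing you indicate makes $C_1\in D_mp+D_\ell p+et$ depend only on $t$ modulo $m+\ell$. Even Step~2 is not justified. For fixed $i$, the shifts $t_{ij}=ma_i+\ell a_j$ $(j>i)$ are infinitely many distinct integers. Your inductive scheme therefore needs $x_i$ to lie in countably many $p$-large sets at once, which may be impossible. Only finitely many shifts (e.g.\ $a_j\equiv 0$, which is just \cref{lem:weighted-selection}) are handled by that scheme.

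The missing idea is the paper's. After rescaling along a residue class modulo $q$ (where $d=(m+\ell)q$) to make a colour class thick, it assumes neither colour has the configuration and uses thickness on the \emph{cross-term} side. The key tool is \cref{lem:local}, proved via Ramsey's theorem. If $Q(K,M)$ has at most one point of colour $j'$ and $\ell x$ lies in $\bigl((m+\ell)K,(m+\ell)(K+M)-C_j\bigr)$, then $c((m+\ell)x)=j'$. The reason is that the cross terms $mx_t+\ell x$ of a sequence of such $x$ land inside $Q(K,M)$. The argument then runs as follows:
\begin{itemize}
\item Colour-$0$ blocks $Q_n$ produce good integers $a$ (meaning $Q(a,L)$ has at most one colour-$0$ point) just beyond $(m+\ell)K_n/\ell$.
\item For the largest good $a_n<K_{n+1}$, the lemma forces $c((m+\ell)(z_n+s))=0$ for $0\le s<L$.
\item Ramsey then forces all cross terms $c(m(z_r+s)+\ell(z_t+s))$ to be $1$.
\item This makes $\alpha_n=(mz_*+\ell z_n)/(m+\ell)$ a good integer with $a_n<\alpha_n<K_{n+1}+D$.
\item By maximality of $a_n$, this gives $Q(\alpha_n,L)\subseteq Q_{n+1}$, contradicting the colour of $Q_{n+1}$.
\end{itemize}
Nothing in your plan plays the role of this transfer between the block scale $(m+\ell)K$ and the diagonal scale $(m+\ell)^2K/\ell$.
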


The theorem is proved in \cref{appA}. When $m=\ell=1$, its role is played
by \cref{thm:Hindman-even-ap}.

\section{Proof of \texorpdfstring{\cref{main1}}{the unweighted theorem}}\label{sec3}
In this section, we prove \cref{main1} by a contradiction argument. Throughout the section, let
$b:\N\to\two$ be a coloring satisfying the standing hypothesis:
\begin{equation*}\tag{H}\label{hyp:counterexample}
\text{no infinite $B\subseteq\N$ has monochromatic $B+B$ under $b$.}
\end{equation*}
Choose $b(0)$ arbitrarily.  The extension to $\Nzero$ still satisfies
\eqref{hyp:counterexample}: if an infinite witness contained $0$, deleting
$0$ would leave an infinite witness in $\N$.
Define
\begin{equation}\label{eq:def-c}
c(n)=b(2n)\ \text{for each}\ n\in\Nzero.
\end{equation}
Assign arbitrary values to $c(n)$ for $n<0$.

\noindent\textbf{A brief outline of the proof.}\enspace
Suppose, toward a contradiction, that $b:\N\to\two$ has no
monochromatic $B+B$, and set $c(n)=b(2n)$.  We encode all affine
samplings $n\mapsto c(mn+r)$ as a point $\mathbf c$ in a product of
two-sided shifts. The semigroup
$\bN$ acts on the orbit closure of $\mathbf c$, and
$D:\bN\to\bN$ denotes the continuous extension of $n\mapsto2n$.
If $J$ is coordinatewise color complementation, then
\cref{prop:key-identity} gives
\[
   (p+p)\mathbf c
   =
   J\bigl((Dp)\mathbf c\bigr)
   \qquad (p\in\Nstar).
\]
In the proof, membership in $Dp$ controls the doubles $2v_i$, while
membership in $p+p$ controls the sums $v_i+v_j$ for $i<j$.

By \cref{prop:disjoint-minimal}, every minimal subsystem
$M\subseteq\omega(\mathbf c)$ satisfies $M\cap JM=\emptyset$.
A clopen separation of $M$ and $JM$ then produces a factor coloring $h$
with no monochromatic $B+B$ and with a thick color class; see
\cref{prop:counterexample-factor}.  This class contains arbitrarily long
progressions of even integers with common difference $2$, so
\cref{thm:Hindman-even-ap} yields a monochromatic $B+B$, a contradiction.

\subsection{The affine encoding}
Let
$
\mathcal I=\{(s,r):s\in\N,\ 0\leq r<s\},
\mathcal X=(\two^{\Z})^{\mathcal I}.
$ A point in $\mathcal X$ is denoted by
$(x_{s,r})_{(s,r)\in\mathcal I}$.
Give $\mathcal X$ the product topology and define
$T:\mathcal X\to\mathcal X$ by
$$
(Tx)_{s,r}(t)=x_{s,r}(t+1),
\qquad (s,r)\in\mathcal I,\ t\in\Z.
$$
The set $\mathcal I\times\Z$ is countable, and $\mathcal X$ is naturally
homeomorphic to $\two^{\mathcal I\times\Z}$. Thus $\mathcal X$ is
compact by Tychonoff's theorem, metrizable because the index set is
countable, and zero-dimensional because its cylinder sets are clopen.
Moreover, $T$ is a homeomorphism.

Define $\mathbf c\in\mathcal X$ by
\[
\mathbf c_{s,r}(t)=c(st+r)
\qquad((s,r)\in\mathcal I,\ t\in\Z),
\]
and put
\begin{equation}\label{eq:bold-c}
X_{\mathbf c}=\overline{\{T^n\mathbf c:n\in\Z\}}.
\end{equation}
Thus $(X_{\mathbf c},T)$ is a topological dynamical system, and
$X_{\mathbf c}$ is zero-dimensional.

Define
$J:\mathcal X\to\mathcal X$ by
\begin{equation}\label{eq:J}
(Jx)_{s,r}(t)=1-x_{s,r}(t).
\end{equation}
Then $J$ is a fixed-point-free continuous involution and  $JT=TJ$.

Next, we introduce a map $\Delta_2$ on $\mathcal X$ and establish some of
its properties. For $k\in\Z$ and $j\in\{0,1\}$, define
$\Delta_2:\mathcal X\to\mathcal X$ by
	\begin{equation}\label{eq:Delta-def}
		(\Delta_2 x)_{s,r}(2k+j)=x_{2s,sj+r}(k),
		\qquad (s,r)\in\mathcal I.
	\end{equation}
\begin{lemma}\label{lem:Delta}
The map $\Delta_2$ is continuous and
	\begin{equation}\label{eq:Delta-relations}
		\Delta_2\mathbf c=\mathbf c,\
		\Delta_2 T=T^2\Delta_2,\ \text{and}\
		\Delta_2 J=J\Delta_2.
	\end{equation}
	Moreover, for every $p\in\bN$ and $x\in\mathcal X$,
	\begin{equation}\label{eq:Delta-p}
		\Delta_2(px)=(Dp)(\Delta_2 x).
	\end{equation}
In particular, $\Delta_2(X_{\mathbf c})\subseteq X_{\mathbf c}$.
\end{lemma}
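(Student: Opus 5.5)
Everything here is a coordinatewise verification, since $\Delta_2$ only reindexes coordinates. The plan is to check each of the listed properties directly from the definitions \eqref{eq:Delta-def}, \eqref{eq:J} and the definition of $\mathbf c$, and then to obtain \eqref{eq:Delta-p} by a limit argument.

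\emph{Well-definedness and continuity.} Every $t\in\Z$ has a unique representation $t=2k+j$ with $k\in\Z$ and $j\in\{0,1\}$. If $0\le r<s$ then $0\le sj+r<2s$, so $(2s,sj+r)\in\mathcal I$ and $\Delta_2$ is well defined. Each coordinate of $\Delta_2x$ is a single coordinate of $x$, so $\Delta_2$ is a coordinate map and hence continuous in the product topology.

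\emph{The three identities in \eqref{eq:Delta-relations}.}
\begin{itemize}
\item Fixing $\mathbf c$: $(\Delta_2\mathbf c)_{s,r}(2k+j)=\mathbf c_{2s,sj+r}(k)=c(2sk+sj+r)=c(s(2k+j)+r)=\mathbf c_{s,r}(2k+j)$.
\item Commuting with $J$: complementation acts coordinatewise and $\Delta_2$ only permutes coordinates, so $\Delta_2J=J\Delta_2$.
\item Intertwining $T$ and $T^2$: on one side, $(\Delta_2Tx)_{s,r}(2k+j)=(Tx)_{2s,sj+r}(k)=x_{2s,sj+r}(k+1)$. On the other, $(T^2\Delta_2x)_{s,r}(2k+j)=(\Delta_2x)_{s,r}(2(k+1)+j)=x_{2s,sj+r}(k+1)$. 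These agree, so $\Delta_2T=T^2\Delta_2$.
\end{itemize}

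\emph{The ultrafilter identity \eqref{eq:Delta-p}.} This step needs some care, because it mixes the $\bN$ action with $D$. Iterating $\Delta_2T=T^2\Delta_2$ gives $\Delta_2T^nx=T^{2n}\Delta_2x$ for all $n\in\Nzero$. Continuity of $\Delta_2$ preserves $p$-limits, so
\[
\Delta_2(px)=p\text{-}\lim_n\Delta_2T^nx=p\text{-}\lim_nT^{2n}\Delta_2x .
\]
It remains to identify the right-hand side with $(Dp)(\Delta_2x)=Dp\text{-}\lim_mT^m\Delta_2x$.

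Write $y=\Delta_2x$ and let $z=(Dp)y$. Take any open neighbourhood $U$ of $z$. By the definition of the $Dp$-limit, $A=\{m:T^my\in U\}\in Dp$. By \eqref{eq:Dk-member}, $\{n:2n\in A\}\in p$, and this set is exactly $\{n:T^{2n}y\in U\}$. Since this holds for every $U$, we get $p\text{-}\lim_nT^{2n}y=z$, using uniqueness of limits in the Hausdorff space $\mathcal X$.

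\emph{The final assertion $\Delta_2(X_{\mathbf c})\subseteq X_{\mathbf c}$.} Since $\Delta_2\mathbf c=\mathbf c$, we have $\Delta_2T^n\mathbf c=T^{2n}\mathbf c\in X_{\mathbf c}$ for all $n\in\Z$; for negative $n$ the relation $\Delta_2T^n=T^{2n}\Delta_2$ holds because $T$ is invertible. Continuity of $\Delta_2$ and closedness of $X_{\mathbf c}$ then give the inclusion.

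The only genuine subtlety is the $p$-limit identification in the third step. Everything else is index bookkeeping, and the key point there is the check that $sj+r<2s$, which keeps $(2s,sj+r)$ in $\mathcal I$.
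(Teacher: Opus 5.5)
Your proposal is correct and follows essentially the same route as the paper: you verify the three identities coordinatewise, get continuity because each output coordinate is a single coordinate of $x$, and obtain \eqref{eq:Delta-p} by pushing $p$-limits through $\Delta_2$. Your use of \eqref{eq:Dk-member} to identify $p\text{-}\lim_n T^{2n}y$ with $(Dp)y$ spells out a step the paper states without comment, and your final inclusion argument matches the paper's.
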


\begin{proof}
Since $0\leq mj+r<2m$, the coordinate $(2m,mj+r)$ in
\eqref{eq:Delta-def} belongs to $\mathcal I$. Every output coordinate is a single
coordinate projection of $x$, so $\Delta_2$ is continuous.

For $(m,r)\in\mathcal I$, $k\in\Z$, and $j\in\{0,1\}$,
\begin{align*}
   (\Delta_2\mathbf c)_{m,r}(2k+j)
   &=\mathbf c_{2m,mj+r}(k)
   =c(2mk+mj+r)\\
   &=c\bigl(m(2k+j)+r\bigr)\\
   &=\mathbf c_{m,r}(2k+j),
\end{align*}
so $\Delta_2\mathbf c=\mathbf c$.  Similarly,
\begin{align*}
   (\Delta_2 Tx)_{m,r}(2k+j)
   &=(Tx)_{2m,mj+r}(k)
   =x_{2m,mj+r}(k+1)\\
   &=(T^2\Delta_2 x)_{m,r}(2k+j),
\end{align*}
and
\[
   (\Delta_2 Jx)_{m,r}(2k+j)
   =1-x_{2m,mj+r}(k)
   =(J\Delta_2 x)_{m,r}(2k+j).
\]
This proves \eqref{eq:Delta-relations}.

By continuity and $\Delta_2 T^n=T^{2n}\Delta_2$,
\[
   \Delta_2(px)
   =p\!\operatorname{-lim}_n\Delta_2(T^nx)
   =p\!\operatorname{-lim}_nT^{2n}(\Delta_2 x)
   =(Dp)(\Delta_2 x),
\]
which proves \eqref{eq:Delta-p}.  Finally,
$\Delta_2(T^n\mathbf c)=T^{2n}\mathbf c$ for every $n\in\Z$, and
continuity gives
\[
   \Delta_2(X_{\mathbf c})
   \subseteq\overline{\{T^{2n}\mathbf c:n\in\Z\}}
   \subseteq X_{\mathbf c}.
\] This completes the proof.
\end{proof}

\subsection{A key identity for the encoded point}

The following proposition characterizes the point $(p+p){\mathbf c}$.
\begin{prop}\label{prop:key-identity}
	Under the standing hypothesis \eqref{hyp:counterexample}, for every
	$p\in\Nstar$,
	\begin{equation}\label{eq:key-identity}
	(p+p)\mathbf c=J(Dp)\mathbf c.
	\end{equation}
\end{prop}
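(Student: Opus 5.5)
The plan is to prove \eqref{eq:key-identity} coordinatewise by contradiction. If the two points differed at some coordinate, we would obtain a color $\gamma$ together with two $\bN$-large sets. From these we extract an infinite set whose pairwise sums (including doubles) all fall into one color class of $b$, contradicting \eqref{hyp:counterexample}.

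First, unwind both sides. For $(s,r)\in\mathcal I$ and $t\in\Z$,
\[
((p+p)\mathbf c)_{s,r}(t)=(p+p)\!\operatorname{-lim}_n c(s(n+t)+r),
\]
\[
((Dp)\mathbf c)_{s,r}(t)=(Dp)\!\operatorname{-lim}_n c(s(n+t)+r)=p\!\operatorname{-lim}_n c(s(2n+t)+r).
\]
The second identity uses \eqref{eq:Dk-member}. Since $J$ complements each coordinate, it suffices to rule out equality of these two values at every coordinate.

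So suppose that at some $(s,r,t)$ both values equal the same $\gamma\in\two$. Put
\[
A=\{n\in\Nzero:\ c(s(n+t)+r)=\gamma\}.
\]
Then $A\in p+p$, and by \eqref{eq:Dk-member} the set $E=\{n:2n\in A\}$ belongs to $p$. By \eqref{eq:beta-add}, the set $A^\star=\{n:A-n\in p\}$ belongs to $p$.

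Next, build $v_1<v_2<\cdots$ recursively by the standard Galvin--Glazer-type step. Take
\[
v_k\in A^\star\cap E\cap\bigcap_{i<k}(A-v_i)\cap[N_k,\infty),
\]
which is a member of $p$, hence nonempty. Here $v_i\in A^\star$ gives $A-v_i\in p$, and since $p\in\Nstar$ the tail $[N_k,\infty)$ belongs to $p$. The threshold $N_k$ is chosen so that $v_k>v_{k-1}$ and $v_k$ is large enough that all arguments below are positive. This yields $2v_i\in A$ for all $i$ and $v_i+v_j\in A$ for $i<j$. Hence
\[
c\bigl(s(v_i+v_j+t)+r\bigr)=\gamma\qquad\text{for all } i\le j.
\]

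Finally, translate back to $b$ using \eqref{eq:def-c}:
\[
b\bigl(2s(v_i+v_j)+2st+2r\bigr)=\gamma.
\]
Set $x_i=2sv_i+st+r$. Because $v_i$ is large, each $x_i\in\N$, and $x_i+x_j$ is exactly the argument above. So $B=\{x_i\}$ is infinite and $B+B$ is monochromatic, contradicting \eqref{hyp:counterexample}. Therefore the two values differ at every coordinate, which is \eqref{eq:key-identity}.

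The only delicate points are bookkeeping. One is that $t$ may be negative and the arbitrary values of $c$ on negative integers must not enter; both are handled by taking every $v_i$ large, i.e. by intersecting with a tail, which lies in $p$ because $p$ is nonprincipal. The other is using \eqref{eq:Dk-member} to identify the $Dp$-limit with a $p$-limit along $2n$.
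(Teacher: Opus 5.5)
Your proof is correct and is essentially the paper's argument. The paper isolates your recursive choice of $v_1<v_2<\cdots$ as \cref{lem:diagonal-recursion}, and your translation back to $b$ as \cref{lem:no-shifted-sumset}: with $n_i=sv_i$ and $R=st+r$ it sets $m_i=2n_i+R$, which is exactly your $x_i=2sv_i+st+r$; the only cosmetic difference is that the paper discards finitely many terms to make these positive, whereas you build this into the thresholds $N_k$.
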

Before proving it, we need two lemmas.
\begin{lemma}\label{lem:no-shifted-sumset}
	Under the standing hypothesis \eqref{hyp:counterexample}, there do not exist $R\in\Z$, a strictly increasing sequence
	$0\leq n_1<n_2<\cdots$, and $\gamma\in\two$ such that for all $i\leq j$,
	\begin{equation}\label{eq:shifted-sumset}
	c(n_i+n_j+R)=\gamma
	\end{equation}
\end{lemma}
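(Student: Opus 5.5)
The plan is to argue by contradiction and turn a hypothetical shifted configuration for $c$ directly into an unshifted sumset configuration for $b$. The point is that $c$ samples $b$ on even numbers, so a shift $R$ in the argument of $c$ becomes a shift $2R$ in the argument of $b$. A shift of $2R$ can be split evenly between the two summands. I will therefore use the set $\{2n_i+R\}$ as the witness $B$ and invoke \eqref{hyp:counterexample}.

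Suppose $R\in\Z$, $0\le n_1<n_2<\cdots$ and $\gamma\in\two$ satisfy \eqref{eq:shifted-sumset} for all $i\le j$.

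First I discard an initial segment. Since the $n_i$ are strictly increasing, $n_i\to\infty$, so there is $i_0$ with $2n_i+R\ge 1$ for all $i\ge i_0$. For $i,j\ge i_0$ this gives $n_i+n_j+R\ge 1$. So the relevant values of $c$ are given by \eqref{eq:def-c} and not by the arbitrary values assigned to negative arguments. Likewise $b$ is evaluated only at arguments at least $2$, so the arbitrary choice of $b(0)$ plays no role.

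Next I set
\[
B=\{\,2n_i+R:\ i\ge i_0\,\}\subseteq\N.
\]
This set is infinite because $i\mapsto 2n_i+R$ is strictly increasing. Take any two elements $x=2n_i+R$ and $y=2n_j+R$ of $B$, and by symmetry assume $i\le j$; the case $x=y$, i.e.\ $i=j$, is included. Then $x+y=2(n_i+n_j+R)$, so by \eqref{eq:def-c} and \eqref{eq:shifted-sumset},
\[
b(x+y)=c(n_i+n_j+R)=\gamma .
\]
Hence $B+B$ is monochromatic under $b$, and $B$ is an infinite subset of $\N$. This contradicts \eqref{hyp:counterexample}.

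There is no real obstacle in this argument. The only points needing care are the bookkeeping ones. The witness must lie in $\N$, which the tail $i\ge i_0$ ensures. The diagonal sums $2x$ must be covered, which they are because \eqref{eq:shifted-sumset} is assumed for $i\le j$ and not only for $i<j$. Finally, the shift must split evenly, which works because it enters $b$ as $2R$.
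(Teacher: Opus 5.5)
Your proposal is correct and is essentially the paper's proof: the paper also takes the witness $m_i=2n_i+R$, discards finitely many terms so that it lies in $\N$, and uses $b(m_i+m_j)=b\bigl(2(n_i+n_j+R)\bigr)=c(n_i+n_j+R)=\gamma$ to contradict \eqref{hyp:counterexample}. You additionally check that $n_i+n_j+R\ge 1$ on the tail, so the arbitrary values of $c$ at negative arguments are never used; this is a good detail that the paper leaves implicit.
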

\begin{proof}
If \eqref{eq:shifted-sumset} holds, put $m_i=2n_i+R$. Then
$\{m_i\}_{i\ge 1}$ is strictly increasing. Discarding finitely many terms,
we may assume that all $m_i$ lie in $\N$. For all remaining $i\leq j$,
	$
	b(m_i+m_j)
	=b\bigl(2(n_i+n_j+R)\bigr)
	=c(n_i+n_j+R)
	=\gamma.
	$
This contradicts \eqref{hyp:counterexample}.
\end{proof}
\begin{lemma}\label{lem:diagonal-recursion}
	Let $A\subseteq\Nzero$ and $p\in\Nstar$.  If
	$A\in p+p\ \text{and}\
	A\in Dp, $
	then there is a strictly increasing sequence of positive integers
	$v_1<v_2<\cdots$ such that for all $i\le j$,
	\begin{equation}\label{eq:diagonal-recursion}
	v_i+v_j\in A.
	\end{equation}
\end{lemma}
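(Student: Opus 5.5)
We build the sequence $v_1<v_2<\cdots$ recursively, in the style of the proof of Hindman's theorem (Galvin--Glazer), but without idempotents. Throughout we use the set
\[
   A^\star=\{n\in\Nzero: A-n\in p\},
\]
which lies in $p$ by \eqref{eq:beta-add}, because $A\in p+p$. We also use
\[
   A_2=\{n\in\Nzero:2n\in A\},
\]
which lies in $p$ by \eqref{eq:Dk-member}, because $A\in Dp$. The shifted sets $A-v$ then control pairs: $v\in A^\star$ means $A-v\in p$, so a later $w$ chosen in $A-v$ gives $v+w\in A$. The doubles $2v_i$ are handled by choosing each $v_i$ inside $A_2$.

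The inductive hypothesis after $k$ steps is as follows. We have $v_1<\cdots<v_k$ in $\N$ with:
\begin{enumerate}
\renewcommand{\labelenumi}{\textup{(\alph{enumi})}}
\item $v_i+v_j\in A$ for all $i\le j\le k$;
\item each $v_i$ lies in $A^\star$, so that $A-v_i\in p$.
\end{enumerate}
To get started, choose $v_1\in A^\star\cap A_2\cap\N$; this set is in $p$ and hence nonempty, since $p$ is nonprincipal.

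For the inductive step, put
\[
   P_k=A^\star\cap A_2\cap\bigcap_{i=1}^k(A-v_i)\cap\{n:n>v_k\}.
\]
Each set in this intersection belongs to $p$: we have $A^\star\in p$, $A_2\in p$, each $A-v_i\in p$ by hypothesis (b), and the tail is cofinite, so it is in the nonprincipal ultrafilter $p$. Hence $P_k\in p$, and in particular $P_k\ne\emptyset$. Choose $v_{k+1}\in P_k$. Then $v_i+v_{k+1}\in A$ for every $i\le k$, and $2v_{k+1}\in A$. Together with (a) this gives (a) at stage $k+1$, and (b) is preserved because $v_{k+1}\in A^\star$.

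The crucial point is that $A^\star\in p$ is reused at every stage. This is what keeps the ``future'' condition (b) alive without needing $p$ to be idempotent: we never have to find a $v$ with $A-v\in p+p$, only with $A-v\in p$. The only condition that asks more than membership of $v_{k+1}$ in $p$-large sets is the diagonal one, $2v_{k+1}\in A$, and it is exactly the hypothesis $A\in Dp$ that supplies it. I do not expect a genuine obstacle. The main thing to check carefully is the bookkeeping: the tail condition must be in $p$, which holds because $p\in\Nstar$, and the $v_i$ must be positive, which follows because $0$ is excluded by the tail condition after the first step, and at the first step by intersecting with $\N$.
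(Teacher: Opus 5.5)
Your proof is correct and is essentially the paper's argument. The paper likewise takes $K=\{n:A-n\in p\}\in p$ and $H=\{n:2n\in A\}\in p$, and chooses each $v_s$ from $H\cap K\cap\bigcap_{i<s}(A-v_i)$ above $v_{s-1}$, with $v_1\in H\cap K\cap\mathbb{N}$.
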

\begin{proof}
	By \eqref{eq:beta-add} and \eqref{eq:Dk-member},
	$
	K=\{n:A-n\in p\}\in p\ \text{and}\
	H=\{n:2n\in A\}\in p.
	$
	Choose $v_1\in H\cap K\cap\N$.  Suppose that
	$v_1<\cdots<v_{s-1}$ have been chosen and that
	\eqref{eq:diagonal-recursion} holds for $1\leq i\leq j<s$.  Since
	$v_i\in K$, one has $A-v_i\in p$.  Hence,
	\[
	E_s=H\cap K\cap\bigcap_{i=1}^{s-1}(A-v_i)
	\cap\{n\in \N:n>v_{s-1}\}\in p.
	\]
	Choose
	$v_s\in E_s$.  Then $v_s>v_{s-1}$, $v_s\in H$ gives
	$2v_s\in A$, and $v_s\in A-v_i$ gives $v_i+v_s\in A$ for $i<s$.
	The induction is complete.
\end{proof}
Now, we prove \cref{prop:key-identity}.
\begin{proof}[Proof of \cref{prop:key-identity}]
		Fix $p\in\Nstar$ and suppose that \eqref{eq:key-identity} fails.  Then
		there are $(s,r)\in\mathcal I$ and $t\in\Z$ such that
		$
		((p+p)\mathbf c)_{s,r}(t)
		\neq 1-((Dp)\mathbf c)_{s,r}(t).
		$
Since both values belong to $\two$, they must be equal; write
				$$
		((p+p)\mathbf c)_{s,r}(t)
		=((Dp)\mathbf c)_{s,r}(t)
		:=\gamma.
		$$
		Let
		$
		U=\{x\in\mathcal X:x_{s,r}(t)=\gamma\}
		$
		and put
		\[
		A=\{n\in\Nzero:T^n\mathbf c\in U\}
		=\{n\in\Nzero:c(s(n+t)+r)=\gamma\}.
		\]
		Then $A\in p+p$ and $A\in Dp$.
		By \cref{lem:diagonal-recursion}, choose a strictly increasing sequence of
		positive integers $v_1<v_2<\cdots$ such that $v_i+v_j\in A$ for all $i\leq j$.  Set
		$
		N_i=sv_i,
		R=st+r.
		$
		Then
		$
		c(N_i+N_j+R)
		=c\bigl(s(v_i+v_j+t)+r\bigr)
		=\gamma
		$ for all $i\le j$.
	This contradicts \cref{lem:no-shifted-sumset}. This completes the proof.
\end{proof}

\subsection{Minimal subsystems of \texorpdfstring{$\omega(\mathbf c)$}{the omega-limit set}}

Let
$\Omega=\omega(\mathbf c)\subseteq X_{\mathbf c}$.
By \cref{lem:omega-ultrafilter},
\begin{equation}\label{eq:Omega-ultrafilter}
\Omega=\{p\mathbf c:p\in\Nstar\}.
\end{equation}
Choose a minimal subsystem $M\subseteq\Omega$.

First, we prove that $\Omega$ is $J$-invariant.
\begin{lemma}\label{lem:J-Omega}
	Under the standing hypothesis \eqref{hyp:counterexample}, one has
	$J\Omega=\Omega$.
\end{lemma}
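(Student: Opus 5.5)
Since $J$ is an involution, it suffices to prove $J\Omega\subseteq\Omega$; applying $J$ to this inclusion then gives $\Omega=J(J\Omega)\subseteq J\Omega$. By \eqref{eq:Omega-ultrafilter}, every point of $\Omega$ has the form $q\mathbf c$ with $q\in\Nstar$. I will show that $J(q\mathbf c)\in\Omega$ for every such $q$.

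\textbf{Step 1: handle points of the form $(Dp)\mathbf c$.} Let $p\in\Nstar$. By \cref{prop:key-identity},
\[
J\bigl((Dp)\mathbf c\bigr)=(p+p)\mathbf c.
\]
By \cref{lem:ultrafilter-facts}(ii), $p+p\in\Nstar$, so $(p+p)\mathbf c\in\Omega$.

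\textbf{Step 2: reduce a general $q$ to Step 1.} Write $q=\varepsilon+Dp$ with $p\in\Nstar$ and $\varepsilon\in\{0,1\}$, using \cref{lem:ultrafilter-facts}(iii). By \eqref{eq:action},
\[
q\mathbf c=T^{\varepsilon}\bigl((Dp)\mathbf c\bigr).
\]
Since $J$ commutes with $T$,
\[
J(q\mathbf c)=T^{\varepsilon}J\bigl((Dp)\mathbf c\bigr)=T^{\varepsilon}(p+p)\mathbf c=(\varepsilon+p+p)\mathbf c.
\]
By \cref{lem:ultrafilter-facts}(ii), $\varepsilon+p+p\in\Nstar$, so $J(q\mathbf c)\in\Omega$. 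Alternatively, the last step follows directly from the $T$-invariance of $\Omega$. This proves $J\Omega\subseteq\Omega$, and hence $J\Omega=\Omega$.

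The one delicate point is the identity $q\mathbf c=\varepsilon(Dp)\mathbf c=T^\varepsilon((Dp)\mathbf c)$. It uses that the principal ultrafilter $\varepsilon$ acts as $T^\varepsilon$, together with \eqref{eq:action}. Both facts are standard, so I do not expect a real obstacle. All the substantive work was done in \cref{prop:key-identity}.
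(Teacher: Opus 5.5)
Your proposal is correct and follows the paper's proof essentially verbatim: decompose $q=\varepsilon+Dp$ via \cref{lem:ultrafilter-facts}(iii), use $JT=TJ$ and \cref{prop:key-identity} to get $J(q\mathbf c)=(\varepsilon+p+p)\mathbf c$ with $\varepsilon+p+p\in\Nstar$, and conclude with $J^2=\mathrm{id}$. Your remark that $T$-invariance of $\Omega$ would also finish the last step is a valid equivalent shortcut.
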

\begin{proof}
	Let $x\in\Omega$.  By \eqref{eq:Omega-ultrafilter}, write
	$x=q\mathbf c$ with $q\in\Nstar$.  By
	\cref{lem:ultrafilter-facts}(iii), there are $p\in\Nstar$ and
	$\varepsilon\in\{0,1\}$ such that $q=\varepsilon+Dp$.  Therefore,
	$$
	Jx
	=J\bigl((\varepsilon+Dp)\mathbf c\bigr)\\
	=T^\varepsilon J(Dp)\mathbf c\\
	=T^\varepsilon(p+p)\mathbf c\\
	=(\varepsilon+p+p)\mathbf c,
	$$
	where the third equality uses \cref{prop:key-identity}.  By
	\cref{lem:ultrafilter-facts}(ii), $p+p\in\Nstar$ and then
	$\varepsilon+p+p\in\Nstar$.  Thus $Jx\in\Omega$. So,
	$J\Omega\subseteq\Omega$.  Since $J^2=\operatorname{id}$, the equality follows. This completes the proof.
\end{proof}
Because $J$ is a homeomorphism and $JT=TJ$, the set $JM$ is also a minimal
subsystem. Indeed, if $Y\subseteq JM$ is a nonempty subsystem, then
$JY\subseteq M$ is also a nonempty subsystem;
minimality of $M$ gives $JY=M$, and hence $Y=JM$.

\begin{prop}\label{prop:disjoint-minimal}
	Under the standing hypothesis \eqref{hyp:counterexample}, one has
	$M\cap JM=\emptyset$.
\end{prop}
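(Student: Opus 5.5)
Since $JM$ is a minimal subsystem (shown just above), $M$ and $JM$ are either equal or disjoint. I would therefore assume $JM=M$ and derive a contradiction with \cref{prop:key-identity}. By \cref{lem:diagonal-recursion} and \cref{lem:no-shifted-sumset}, it suffices to find a single $p\in\Nstar$ such that $(p+p)\mathbf c$ and $(Dp)\mathbf c$ agree in at least one coordinate. The key identity says they disagree in \emph{every} coordinate.

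\textbf{Step 1 (rewrite the identity dynamically).} By \eqref{eq:action}, $(p+p)\mathbf c=p(p\mathbf c)$. By \eqref{eq:Delta-p} and $\Delta_2\mathbf c=\mathbf c$, $(Dp)\mathbf c=\Delta_2(p\mathbf c)$. So \cref{prop:key-identity} reads
\[
 p\,(p\mathbf c)=J\Delta_2(p\mathbf c)\qquad(p\in\Nstar).
\]
For an idempotent $u\in\Nstar$ with $y=u\mathbf c$ this gives $y=J\Delta_2 y$, that is, $\Delta_2 y=Jy$. More generally, if $p\mathbf c=z$, then $pz=J\Delta_2 z$. So the behaviour of $p$ on the point $z$ is pinned down by $\Delta_2$.

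\textbf{Step 2 (produce good points in $M$ using $M=JM$).} Fix $y\in M$ and write $y=q\mathbf c$ with $q\in\Nstar$. Let $L$ be a minimal left ideal of $\bN$ contained in $\bN+q$, so every element of $L$ has the form $a+q$. Since $M$ is minimal and $JM=M$, both $y$ and $Jy$ lie in $M$. The Ellis–Auslander theory then gives idempotents $u,u'\in L$ with $uy=y$ and $u'(Jy)=Jy$, and $L$ acts on $M$ transitively. Writing $u=a+q$, we get $u\mathbf c=ay\in M$.

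I would apply the identity of Step 1 to cleverly chosen non-idempotent elements $p=a+q\in L$. The aim is two points $z$ and $T^nz$ with $n\ge1$, both satisfying $\Delta_2 w=Jw$, or a single point $z$ satisfying both $pz=J\Delta_2 z$ and $pz=\Delta_2 z$. The first option gives the contradiction because
\[
 \Delta_2T^nz=T^{2n}\Delta_2z=T^{2n}Jz
 \quad\text{while}\quad
 J T^n z=T^nJz,
\]
which forces $T^nJz=Jz$. Here $M=JM$ is exactly what allows elements of $L$ to carry $y$ to $Jy$ inside the minimal set.

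\textbf{Step 3 (rule out periodicity).} I would then show that $Jz$ cannot be $T$-periodic. Periodicity of a point of $\Omega$ would make $c$ eventually agree, along the relevant affine sampling, with a periodic sequence. A periodic colouring of the even numbers has a colour class containing a residue class, and that gives a monochromatic shifted sumset, contradicting \cref{lem:no-shifted-sumset} (equivalently, admissibility plus \cref{thm:Hindman-even-ap}).

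\textbf{Main obstacle.} The hard part is Step 2. The identity $\Delta_2y=Jy$ is automatic for every point $u\mathbf c$ with $u$ idempotent, so the hypothesis $M=JM$ has to be used to produce a \emph{second} constraint on the same orbit. My first attempt would be to take $p=v+u$, where $v\in L$ satisfies $vy=Jy$, and to compare $(p+p)\mathbf c=v\,u\,v\,y$ with $\Delta_2(v y)=(Dv)\Delta_2 y=(Dv)Jy$. Since $\Delta_2$ intertwines the $\bN$-action through $D$ and commutes with $J$, I hope to force these two points to coincide, contradicting the everywhere-disagreement in the key identity.
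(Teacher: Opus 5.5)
Your last paragraph is the paper's argument. The paper takes an idempotent $e\in\Nstar$ with $y=e\mathbf c\in M$, some $s\in\bN$ with $sy=Jy$, and applies \cref{prop:key-identity} to $q=s+e$; that is your $p=v+u$. But the proposal does not close it. The decisive step is left as a ``hope'', and your Step~2 setup does not support it. In Step~2 you take an arbitrary $y=q\mathbf c\in M$ and an idempotent $u$ with $uy=y$. Yet both the relation $\Delta_2y=Jy$ from Step~1 and your formula $(p+p)\mathbf c=v\,u\,v\,y$ require $y=u\mathbf c$, which does not follow from $uy=y$.

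The repair is to redefine $y:=u\mathbf c$. You already showed $u\mathbf c=ay\in M$, and $u\in\Nstar$ because $u\in\bN+q$ with $q\in\Nstar$. Now $uy=u(u\mathbf c)=(u+u)\mathbf c=y$ and $\Delta_2y=Jy$ hold simultaneously. Take any $v\in\bN$ with $vy=Jy$, which exists since $Jy\in JM=M=\bN y$, and put $p=v+u\in\Nstar$. Then $p\mathbf c=vy=Jy$. Using that $J$ commutes with the $\bN$-action,
\[
 (p+p)\mathbf c=p(Jy)=v\bigl(u(Jy)\bigr)=v\bigl(J(uy)\bigr)=v(Jy)=J(vy)=y,
\]
whereas
\[
 (Dp)\mathbf c=\Delta_2(p\mathbf c)=\Delta_2(Jy)=J\Delta_2y=y.
\]
\cref{prop:key-identity} then gives $y=Jy$, which is impossible. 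Your rewriting $\Delta_2(vy)=(Dv)Jy$ goes in the unhelpful direction; substitute $vy=Jy$ first and use $\Delta_2J=J\Delta_2$.

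The periodic-orbit option in Step~2 and all of Step~3 are unnecessary. Step~3 is also inaccurate as stated. A periodic point of $\Omega$ only forces $c$ to follow a periodic pattern on arbitrarily long blocks, not eventually. That gives admissibility of $b$, so the contradiction would have to come from \cref{thm:Hindman-even-ap}, not directly from \cref{lem:no-shifted-sumset}.
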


\begin{proof}
  If $M\cap JM\neq\emptyset$, the two minimal subsystems are equal.  Suppose
therefore that $M=JM$.
Set
\[
   \mathcal I_M=\{p\in\bN:p\mathbf c\in M\}.
\]
This set is nonempty by \eqref{eq:Omega-ultrafilter}, and it is closed by
continuity of $p\mapsto p\mathbf c$.  If $p\in\mathcal I_M$ and
$s\in\bN$, then
\[
   (s+p)\mathbf c=s(p\mathbf c)\in M.
\]
Indeed, all points $T^n(p\mathbf c)$ lie in the closed set $M$, so their
$s$-limit also lies in $M$.  Hence $\mathcal I_M$ is a left ideal of $\bN$.

Choose a minimal left ideal $L\subseteq\mathcal I_M$ and an idempotent
$e\in L$.  We claim that $e\in\Nstar$.  If $e$ were principal, then
$e+e=e$ would force $e=0$.  Since $L$ is a left ideal and $0\in L$,
\[
   \bN=\bN+0\subseteq L,
\]
so $L=\bN$.  On the other hand, \cref{lem:ultrafilter-facts}(ii) shows that
$\Nstar$ is a nonempty proper left ideal of $\bN$, contradicting the
minimality of $L$.  Thus $e$ is a nonprincipal ultrafilter.

Put $y=e\mathbf c\in M$.  From $e+e=e$ and
\cref{prop:key-identity},
\[
   y=(e+e)\mathbf c=J(De)\mathbf c,
\]
so
\begin{equation}\label{eq:De-c}
   (De)\mathbf c=Jy.
\end{equation}
By \eqref{eq:Delta-p} and $\Delta_2\mathbf c=\mathbf c$,
\begin{equation}\label{eq:Delta-y}
   \Delta_2 y=(De)\mathbf c=Jy,
   \qquad
   \Delta_2(Jy)=J\Delta_2 y=y.
\end{equation}

Since $M=JM$, one has $Jy\in M$.  The continuous map
\[
   \theta_y:\bN\longrightarrow M,
   \qquad
   \theta_y(p)=py,
\]
has image equal to $M$: its compact image is closed, contains the forward
orbit of $y$, and is contained in its closure, which is $M$ by minimality.
Thus there is $s\in\bN$ such that
\begin{equation}\label{eq:s-y}
   sy=Jy.
\end{equation}
Let $q=s+e$.  Since the right factor $e$ is a nonprincipal ultrafilter,
\cref{lem:ultrafilter-facts}(ii) gives $q\in\Nstar$, and
\begin{equation}\label{eq:q-c}
   q\mathbf c=s(e\mathbf c)=sy=Jy.
\end{equation}
Moreover, $ey=(e+e)\mathbf c=y$, and \eqref{eq:factor-action} applied to
$J$ gives
\[
   e(Jy)=J(ey)=Jy,
   \qquad
   s(Jy)=J(sy)=y.
\]
Consequently,
\begin{equation}\label{eq:qplusq-c}
   (q+q)\mathbf c
   =q(q\mathbf c)
   =q(Jy)
   =s(e(Jy))
   =y.
\end{equation}
On the other hand, \eqref{eq:Delta-p}, \eqref{eq:q-c}, and
\eqref{eq:Delta-y} give
\begin{equation}\label{eq:Dq-c}
   (Dq)\mathbf c
   =\Delta_2(q\mathbf c)
   =\Delta_2(Jy)
   =y.
\end{equation}
Applying \cref{prop:key-identity} to the same nonprincipal ultrafilter $q$ yields
\[
   y=(q+q)\mathbf c=J(Dq)\mathbf c=Jy,
\]
contrary to the fact that $J$ has no fixed point. This completes the proof.
\end{proof}

\subsection{Constructing the new coloring \texorpdfstring{$h$}{h} and its properties}

By \cref{lem:J-Omega,prop:disjoint-minimal,lem:clopen-domain}, there is
a clopen set $H\subseteq\Omega$ such that $M\subseteq H$ and
\begin{equation}\label{eq:H-domain}
\Omega=H\sqcup JH.
\end{equation}
Apply \cref{lem:asymptotic-binary-coding} to $(X_{\mathbf c},T)$,
$\mathbf c$, $\Omega$, $J$, and $H$. We obtain a binary coloring
$h:\Nzero\to\two$ for which $h^{-1}(1)\cap\N$ is thick in $\N$. Moreover,
\cref{prop:key-identity} and the coding conclusion give, for every
$p\in\Nstar$,
\begin{equation}\label{eq:h-identity}
(p+p)h=J(Dp)h.
\end{equation}
Indeed, $p+p$ and $Dp$ are free by
\cref{lem:ultrafilter-facts}(ii), so both orbit limits belong to $\Omega$.

\begin{prop}\label{prop:counterexample-factor}
Under the standing hypothesis \eqref{hyp:counterexample}, the coloring
$h:\Nzero\to\two$ obtained above has the following properties:
\begin{enumerate}
\item[(i)] no infinite set $B\subseteq\N$ has $B+B$ monochromatic under $h$;
\item[(ii)] the set $h^{-1}(1)\cap\N$ is thick in $\N$. More precisely,
      for every $L,N\in\N$, there is $n\geq N$ such that
      $h(n)=h(n+1)=\cdots=h(n+L)=1$.
\end{enumerate}
\end{prop}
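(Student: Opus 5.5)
Part (ii) is already contained in \cref{lem:asymptotic-binary-coding}. Since $M\subseteq H$ and $M$ is a minimal subsystem of $\Omega=\omega(\mathbf c)$, the last assertion of that lemma gives, for all $L,N$, some $n\ge N$ with $h(n)=\cdots=h(n+L)=1$. So the work is all in (i), and there I will mirror the proof of \cref{prop:key-identity} in reverse, using only \eqref{eq:h-identity}.

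For (i), suppose toward a contradiction that $B=\{v_1<v_2<\cdots\}\subseteq\N$ is infinite and $h(v_i+v_j)=\gamma$ for all $i\le j$. Let $A=\{n\in\Nzero:h(n)=\gamma\}$. I will produce one nonprincipal $p$ with $A\in p+p$ and $A\in Dp$. Then both $((p+p)h)(0)=\gamma$ and $((Dp)h)(0)=\gamma$, which contradicts \eqref{eq:h-identity}: that identity says $((p+p)h)(0)=1-((Dp)h)(0)$.

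\textbf{Construction of $p$.} Take any nonprincipal $p$ containing every tail $\{v_j:j\ge k\}$.
\begin{itemize}
\item[(a)] For $A\in Dp$: by \eqref{eq:Dk-member} we need $\{n:2n\in A\}\in p$. This set contains every $v_j$, because $2v_j=v_j+v_j\in A$, so it contains $B$, which lies in $p$.
\item[(b)] For $A\in p+p$: by \eqref{eq:beta-add} we need $\{n:A-n\in p\}\in p$. For $n=v_i$, the set $A-v_i$ contains $\{v_j:j\ge i\}$, and this tail is in $p$. Hence $A-v_i\in p$ for every $i$, so $B\subseteq\{n:A-n\in p\}$, and this set is in $p$.
\end{itemize}
Thus the restricted sumset condition ($i\le j$) is exactly what is needed. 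Because $p$ is nonprincipal, \eqref{eq:h-identity} applies to it.

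\textbf{Main obstacle.} The only delicate point is making sure \eqref{eq:h-identity} holds at coordinate $n=0$ for this particular $p$. \Cref{lem:asymptotic-binary-coding} gives $ph=\pi_H(p\mathbf c)$ for every $p\in\Nstar$. Applying it to the free ultrafilters $p+p$ and $Dp$, whose orbit limits lie in $\Omega$, and combining with \cref{prop:key-identity} and $\pi_H\circ J=J\circ\pi_H$ gives the identity at every coordinate, including $0$. One also has to check that $A$ is defined using $h$ on $\Nzero$, consistently with the $p$-limit definition of $(ph)(0)$. With that in place the argument is complete, and the coloring $h$ inherits hypothesis \eqref{hyp:counterexample} while having a thick color class.
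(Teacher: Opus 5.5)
Your proposal is correct and follows essentially the same route as the paper. Part (ii) is read off from the last clause of \cref{lem:asymptotic-binary-coding} using $M\subseteq H$. For (i) you take a nonprincipal $p$ containing $B$, verify $A_\gamma\in Dp$ and $A_\gamma\in p+p$ via \eqref{eq:Dk-member} and \eqref{eq:beta-add}, and compare $0$-coordinates in \eqref{eq:h-identity} to get $\gamma=1-\gamma$; the only cosmetic difference is that you use tails $\{v_j:j\ge i\}$ rather than all of $B$ to show $A_\gamma-v_i\in p$, which changes nothing.
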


\begin{proof}
	For (i), suppose that there are an infinite $B\subseteq\N$ and
	$\gamma\in\two$ such that
	\begin{equation}\label{eq:h-mono}
		h(B+B)=\gamma.
	\end{equation}
	Extend the family
	$
	\{B\}\cup
	\{\Nzero\setminus F:F\subseteq\Nzero\text{ finite}\}
	$
	to an ultrafilter $p$. Then $p\in \Nstar$.  Let
	$A_\gamma=\{n\in\Nzero:h(n)=\gamma\}$.  From \eqref{eq:h-mono},
	$
	B\subseteq\{n\in\Nzero:2n\in A_\gamma\}.
	$
	Since $B\in p$, equation \eqref{eq:Dk-member} gives
	$A_\gamma\in Dp$.  Moreover, for every $u\in B$,
	$B\subseteq A_\gamma-u$. So, $A_\gamma-u\in p$.  Hence,
	$
	B\subseteq\{u\in \Nzero:A_\gamma-u\in p\}.
	$
	Then $\{u\in \Nzero:A_\gamma-u\in p\}\in p$. Then \eqref{eq:beta-add} gives $A_\gamma\in p+p$.
	Taking $0$-coordinate in \eqref{eq:h-identity} yields
	\[
	\gamma=((p+p)h)(0)
	=1-((Dp)h)(0)
	=1-\gamma.
	\]
	This is a contradiction.
	
	Property (ii) is part of the conclusion of
	\cref{lem:asymptotic-binary-coding}, since $M\subset H$.
\end{proof}

\subsection{Completion of the proof}
\begin{proof}[Proof of \cref{main1}]
	Suppose, toward a contradiction, that \cref{main1} is false.  Then there
	is a coloring $b:\N\to\two$ satisfying \eqref{hyp:counterexample}, so the
	preceding construction and \cref{prop:counterexample-factor} apply.
	Restrict $h$ to $\N$ and put $A_i=\{n\in\N:h(n)=i\}$ for
	$i\in\two$.  Fix $L\geq1$.  By (ii) of \cref{prop:counterexample-factor}, there is $n\geq1$
	such that
	$
	\{n,n+1,\ldots,n+2L\}\subseteq A_1.
	$
	Let $r_L$ be the least even integer not smaller than $n$.  Then
	$
	r_L,r_L+2,\ldots,r_L+2(L-1)\in A_1.
	$
Thus the hypothesis of \cref{thm:Hindman-even-ap} holds.  That theorem gives an infinite $B\subseteq\N$ for which
$B+B$ is monochromatic under $h$, contradicting (i) of \cref{prop:counterexample-factor}.
This completes the proof.
\end{proof}

\section{Proof of the weighted theorem}\label{sec4}

The proof follows the general scheme used in \cref{sec3}, but the
asymmetric coefficients require several dilation maps and an additional
analysis of the maximal equicontinuous factor.  All general
topological-dynamical, return-time, and ultrafilter facts have already
been collected in \cref{sec2}.

Fix relatively prime $m,\ell\in\N$ and suppose, toward a contradiction, that
$b:\N\to\{0,1\}$ admits no configuration of the form
\eqref{eq:weighted-target}. Choose $b(0)$ arbitrarily and regard $b$ as a
coloring of $\Nzero$. Define
\[
 c(n)=b((m+\ell)n)\qquad(n\in\Nzero),
\]
and assign arbitrary values to $c(n)$ for $n<0$. If a strictly increasing
sequence $(n_i)$ in $\Nzero$ satisfied
\begin{equation}\label{eq:c-target}
 c((m+\ell)n_i)=c(mn_i+\ell n_j)=\gamma\qquad(i<j),
\end{equation}
then, after omitting $0$ if necessary, the sequence
$a_i=(m+\ell)n_i$ would satisfy \eqref{eq:weighted-target} for $b$.
Consequently, no sequence satisfying \eqref{eq:c-target} exists.

\noindent\textbf{A brief outline of the proof.}\enspace
Starting from the counterexample coloring fixed above, we encode all
two-sided affine samplings of the rescaled coloring
in a point $\mathbf c$.  The dilation maps $\Delta_k$ on the resulting
shift space convert the absence of the desired configuration into the
ultrafilter identity
\[
 (D_mp+D_\ell p)\mathbf c
 =J(D_{m+\ell}p)\mathbf c.
\]
This identity first shows that every minimal subsystem
$M\subset\omega(\mathbf c)$ is invariant under color reversal $J$.

The two dilation systems generated by $\Delta_\ell(M)$ and
$\Delta_{m+\ell}(M)$ coincide; denote the common minimal system by $N$.
On the maximal equicontinuous factors of $M$ and $N$, the maps
$\Delta_\ell$, $\Delta_{m+\ell}$, and $J$ become affine maps.  Comparing
their translation parts shows that $J$ acts trivially on the maximal
equicontinuous factor of $N$.  The case where $m$ is odd follows directly
from the resulting torsion relations.  When $m$ is even, a circle
extension records the possible nontrivial two-torsion phase; a clopen
coding of that phase would produce a counterexample coloring with a thick
color class, contradicting the thick-cell lemma.

Finally, we choose a suitable $T^\ell$-minimal component $M_a$ and one
color $\gamma$.  At the $j$th stage, a piecewise syndetic set $P_j$ of
returns to $M_a$ enforces the diagonal condition, while a thickly syndetic
set $H_j$ preserves all previous cross conditions.  Its largeness is
proved by passing to a suitable $T^{\ell m}$-minimal component and applying
\cref{lem:power-mef,lem:full-fibre-returns}. Choosing
$r_j\in P_j\cap H_j$ recursively and putting $n_j=a+\ell r_j$ gives
\[
 c((m+\ell)n_j)=c(mn_i+\ell n_j)=\gamma\qquad(i<j),
\]
which is the required contradiction.  The general case follows by
dividing $m$ and $\ell$ by their greatest common divisor.

\noindent\textbf{Compare with case $m=\ell=1$.}\enspace
Although \cref{sec3} treats the special case $m=\ell=1$ of the present
theorem, its proof uses a shortcut that is not available for general
weights.  In that case the single dilation identity
\[
 (p+p)\mathbf c=J(Dp)\mathbf c
\]
directly forces every minimal subsystem $M\subseteq\omega(\mathbf c)$ to
be disjoint from $JM$.  A clopen separation of these two systems then
produces a counterexample coloring with a thick color class, and
\cref{thm:Hindman-even-ap} finishes the argument.  For general
$(m,\ell)$, the corresponding identity involves the three dilations
$D_m$, $D_\ell$, and $D_{m+\ell}$ and does not by itself give
$M\cap JM=\emptyset$.  We must instead compare the dilation subsystems
through their maximal equicontinuous factors, deal separately with the
possible two-torsion obstruction when $m$ is even, and then construct the
required sequence by a recursive return-time argument.  Thus the two
proofs share their affine encoding and ultrafilter framework, while their
decisive steps are different.

\subsection{The affine encoding}\label{sec:weighted-affine}

As in \cref{sec3}, let
\[
 \mathcal I=\{(s,r):s\in\N,\ 0\leq r<s\},
 \qquad
 \mathcal X=(\two^{\Z})^{\mathcal I},
\]
where $\mathcal X$ carries the product topology.
For $x\in\mathcal X$, write $x=(x_{s,r})_{(s,r)\in\mathcal I}$, and define
\[
 (Tx)_{s,r}(t)=x_{s,r}(t+1),
 \qquad
 (Jx)_{s,r}(t)=1-x_{s,r}(t).
\]
The affine encoding of $c$ is the point $\mathbf c\in\mathcal X$ given by
\[
 \mathbf c_{s,r}(t)=c(st+r).
\]
The shift $T$ is a homeomorphism and $J$ is a fixed-point-free involution
commuting with $T$.
Put
\[
 X_{\mathbf c}=\overline{\{T^n\mathbf c:n\in\Z\}}.
\]
Then $(X_{\mathbf c},T)$ is the two-sided orbit-closure system generated
by the affine encoding.

For $k\in\N$ and $t\in\Z$, write uniquely $t=kq+j$ with
$q\in\Z$ and $0\leq j<k$, and define
\begin{equation}\label{eq:deltak}
 (\Delta_kx)_{s,r}(t)=x_{ks,sj+r}(q).
\end{equation}
This is well defined because
$0\leq sj+r<ks$, so $(ks,sj+r)\in\mathcal I$.  For example,
\[
 (\Delta_k\mathbf c)_{s,r}(t)
 =c(ksq+sj+r)=c(st+r)=\mathbf c_{s,r}(t).
\]
The other two identities below follow by the same substitution:
\begin{equation}\label{eq:delta-relations}
 \Delta_k\mathbf c=\mathbf c,
 \qquad
 \Delta_kT=T^k\Delta_k,
 \qquad
 \Delta_kJ=J\Delta_k.
\end{equation}

The action of $\bN$ and the dilation maps $D_k$ are those fixed in
\cref{sec2}.  From \eqref{eq:delta-relations} and continuity,
\begin{equation}\label{eq:delta-ultra}
 \Delta_k(px)=(D_k p)(\Delta_kx)
\end{equation}
for $p\in\bN$ and $x\in\mathcal X$.  We shall repeatedly use
\cref{lem:ultrafilter-facts}, in particular its residue decomposition
with the relevant value of $k$, and the omega-limit description in
\cref{lem:omega-ultrafilter}.

\subsection{A key identity for the encoded point}

The following proposition is the weighted counterpart of
\cref{prop:key-identity}.
\begin{prop}\label{prop:key}
For every $p\in\Nstar$,
\begin{equation}\label{eq:key}
 (D_m p+D_\ell p)\mathbf c=J(D_{m+\ell} p)\mathbf c.
\end{equation}
\end{prop}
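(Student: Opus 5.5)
We argue exactly as in the proof of \cref{prop:key-identity}, arguing by contradiction coordinate by coordinate. Fix $p\in\Nstar$ and suppose \eqref{eq:key} fails. Since both sides lie in $\mathcal X$ and all coordinates take values in $\two$, some coordinate $(s,r)\in\mathcal I$, $t\in\Z$ gives
\[
((D_mp+D_\ell p)\mathbf c)_{s,r}(t)=((D_{m+\ell}p)\mathbf c)_{s,r}(t)=:\gamma .
\]
Let $U=\{x:x_{s,r}(t)=\gamma\}$, a clopen cylinder, and set
\[
A=\{n\in\Nzero:T^n\mathbf c\in U\}=\{n\in\Nzero:c(s(n+t)+r)=\gamma\}.
\]
Because $U$ is clopen, the definition of the action of $\bN$ yields $A\in D_mp+D_\ell p$ and $A\in D_{m+\ell}p$.

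The central step is a weighted version of \cref{lem:diagonal-recursion}. By \eqref{eq:beta-add} and \eqref{eq:Dk-member}, $A\in D_mp+D_\ell p$ means
\[
K=\{u:\{v:mu+\ell v\in A\}\in p\}\in p .
\]
To see this, note that $\{n:A-n\in D_\ell p\}\in D_mp$. Unwinding $D_\ell$ gives $A-n\in D_\ell p$ iff $\{v:n+\ell v\in A\}\in p$, and unwinding $D_m$ replaces $n$ by $mu$. Also $H=\{u:(m+\ell)u\in A\}\in p$. We then choose $v_1<v_2<\cdots$ recursively from
\[
E_j=H\cap K\cap\bigcap_{i<j}\{v:mv_i+\ell v\in A\}\cap\{v>v_{j-1}\}\in p .
\]
This gives $(m+\ell)v_j\in A$ and $mv_i+\ell v_j\in A$ for all $i<j$. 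Each $E_j$ lies in $p$ because $v_i\in K$ for every $i<j$.

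We translate back to $c$. Put $N_i=sv_i$ and $R=st+r$. Then $c((m+\ell)N_i+R)=\gamma$ and $c(mN_i+\ell N_j+R)=\gamma$ for $i<j$. We absorb the shift through $R=(m+\ell)\cdot(R/(m+\ell))$, which is not an integer in general. The cleaner way is to pass to $b$ directly. Set $a_i=(m+\ell)N_i+R$. Then
\[
(m+\ell)a_i=(m+\ell)\bigl((m+\ell)N_i+R\bigr)
\]
and
\[
ma_i+\ell a_j=(m+\ell)(mN_i+\ell N_j+R).
\]
Hence $b((m+\ell)a_i)=c((m+\ell)N_i+R)=\gamma$ and $b(ma_i+\ell a_j)=c(mN_i+\ell N_j+R)=\gamma$ for $i<j$. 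After discarding finitely many terms so that $a_i\in\N$ (note $R$ may be negative), the set $B=\{a_i\}$ is infinite, strictly increasing, and makes \eqref{eq:weighted-target} monochromatic for $b$. This contradicts the standing assumption on $b$, which is the weighted analogue of \cref{lem:no-shifted-sumset}.

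The only delicate point is the ultrafilter bookkeeping showing $A\in D_mp+D_\ell p$ is equivalent to the set $K$ lying in $p$. One must use \eqref{eq:beta-add} with the \emph{left} factor $D_mp$ and the right factor $D_\ell p$. This ordering is exactly what produces the ordered pair $mx+\ell y$ with $x<y$ rather than the reversed one. I would verify this ordering carefully. The remaining steps are direct analogues of \cref{lem:no-shifted-sumset}, \cref{lem:diagonal-recursion} and the proof of \cref{prop:key-identity}.
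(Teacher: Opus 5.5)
Your proof is correct and follows the paper's argument essentially verbatim. It uses the same set $A$, the same unwinding of $A\in D_mp+D_\ell p$ into $K\in p$ with the recursive selection of \cref{lem:weighted-selection}, and the same witness $a_i=s(m+\ell)v_i+R$; you discard finitely many terms where the paper instead chooses the $v_i$ above a bound, and your aside about writing $R$ as $(m+\ell)\cdot(R/(m+\ell))$ is unnecessary.
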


Before proving it, we need the following selection lemma.

\begin{lemma}\label{lem:weighted-selection}
Let $p\in\Nstar$ and $A\subset\Nzero$. If
\[
 A\in D_m p+D_\ell p
 \qquad\text{and}\qquad
 A\in D_{m+\ell} p,
\]
then there is a strictly increasing sequence $v_1<v_2<\cdots$ such that
\[
 (m+\ell)v_i\in A,
 \qquad
 mv_i+\ell v_j\in A
 \quad(i<j).
\]
The sequence may be chosen above any prescribed bound.
\end{lemma}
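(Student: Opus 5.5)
This is the weighted analogue of \cref{lem:diagonal-recursion}, and I would prove it by the same recursive selection inside a single $p$-large set. First I unpack the two memberships. By \eqref{eq:Dk-member},
\[
H=\{n\in\Nzero:(m+\ell)n\in A\}\in p .
\]
By \eqref{eq:beta-add}, $A\in D_mp+D_\ell p$ means $\{u:A-u\in D_\ell p\}\in D_mp$. Applying \eqref{eq:Dk-member} twice, this becomes
\[
K=\{n\in\Nzero:\{w:\ell w\in A-mn\}\in p\}\in p ,
\]
that is, $K=\{n:\{w:mn+\ell w\in A\}\in p\}\in p$. For $n\in K$ put $A_n=\{w\in\Nzero:mn+\ell w\in A\}$, so that $A_n\in p$.

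Next I choose the sequence recursively. Fix the prescribed bound $N_0$. Since $p\in\Nstar$, every cofinite set belongs to $p$, so I can pick $v_1\in H\cap K$ with $v_1>N_0$. Suppose $v_1<\cdots<v_{s-1}$ have been chosen in $H\cap K$ and satisfy the required relations for indices less than $s$. Then
\[
E_s=H\cap K\cap\bigcap_{i<s}A_{v_i}\cap\{n:n>v_{s-1}\}
\]
is a finite intersection of members of $p$, so it lies in $p$ and in particular is nonempty. I pick $v_s\in E_s$. Then $v_s\in H$ gives $(m+\ell)v_s\in A$, and $v_s\in A_{v_i}$ gives $mv_i+\ell v_s\in A$ for every $i<s$. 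Keeping $v_s\in K$ ensures that $A_{v_s}$ is available at later stages.

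The only point that needs care is the order of the coefficients. The earlier element $v_i$ carries the weight $m$, coming from the left summand $D_mp$, and the later element carries $\ell$, coming from $D_\ell p$. This matches the convention that in $p+q$ the outer variable is governed by $p$, so the recursion produces exactly the pairs $mv_i+\ell v_j$ with $i<j$, as the statement requires. Beyond checking this orientation there is no real obstacle: the argument is the same finite-intersection recursion as in the unweighted case.
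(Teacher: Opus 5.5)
Your proposal is correct and follows essentially the same route as the paper: you unpack the two memberships into the $p$-large sets $H=\{n:(m+\ell)n\in A\}$ and $K=\{n:\{w:mn+\ell w\in A\}\in p\}$, then choose each $v_s$ recursively from $H\cap K\cap\bigcap_{i<s}A_{v_i}\cap(v_{s-1},\infty)$. You also handle the orientation correctly: the earlier element carries the weight $m$ from the left summand $D_mp$, and the later element carries $\ell$.
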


\begin{proof}
By the definitions of dilation and ultrafilter addition,
\[
 A\in D_{m+\ell} p
 \quad\Longleftrightarrow\quad
 H:=\{n:(m+\ell)n\in A\}\in p,
\]
and
\[
 A\in D_m p+D_\ell p
 \quad\Longleftrightarrow\quad
 K:=\left\{x:\{y:mx+\ell y\in A\}\in p\right\}\in p.
\]
For $x\in K$, write $Y_x=\{y:mx+\ell y\in A\}\in p$. Choose
$v_1\in H\cap K$ above the prescribed bound. Having chosen
$v_1<\cdots<v_{j-1}$, choose $v_j>v_{j-1}$ in
\[
 H\cap K\cap\bigcap_{i<j}Y_{v_i}.
\]
All these sets belong to the free ultrafilter $p$.
\end{proof}

\begin{proof}[Proof of \cref{prop:key}]
Suppose that \eqref{eq:key} fails. Since every coordinate is binary, there
are $(s,r)\in\mathcal I$, $t\in\Z$, and $\gamma\in\{0,1\}$ such that the two
coordinates
\[
 \bigl((D_m p+D_\ell p)\mathbf c\bigr)_{s,r}(t)
 \quad\text{and}\quad
 \bigl((D_{m+\ell} p)\mathbf c\bigr)_{s,r}(t)
\]
are both equal to $\gamma$. Put
\[
 A=\{n\in\Nzero:c(s(n+t)+r)=\gamma\}.
\]
The definition of the ultrafilter action on the shift shows that the two
coordinate equalities are precisely
$A\in D_m p+D_\ell p$ and $A\in D_{m+\ell} p$. Apply
\cref{lem:weighted-selection}.  Using its last assertion, choose the
$v_i$ above a bound for which all the integers occurring below are
positive, and put
\[
 R=st+r,
 \qquad
 a_i=s(m+\ell)v_i+R.
\]
Then $(a_i)$ is a strictly increasing sequence in $\N$ and
\[
 b((m+\ell)a_i)=c(a_i)=\gamma.
\]
For $i<j$,
\[
 ma_i+\ell a_j
 =(m+\ell)\bigl(s(mv_i+\ell v_j)+R\bigr),
\]
so
\[
 b(ma_i+\ell a_j)
 =c\bigl(s(mv_i+\ell v_j)+R\bigr)=\gamma.
\]
This contradicts the choice of $b$.
\end{proof}

\subsection{Minimal subsystems of \texorpdfstring{$\omega(\mathbf c)$}{the omega-limit set}}

Let
\[
 \Omega=\omega(\mathbf c)
 =\bigcap_{N\geq0}\overline{\{T^n\mathbf c:n\geq N\}}.
\]
Because $T$ is a homeomorphism, $\Omega$ is invariant under both $T$ and
$T^{-1}$. By \cref{lem:omega-ultrafilter}, every point of $\Omega$ has
the form $p\mathbf c$ for some $p\in\Nstar$.

First, we prove that $\Omega$ is $J$-invariant.
\begin{lemma}\label{lem:JOmega}
One has
\begin{equation}\label{eq:JOmega}
 J\Omega=\Omega.
\end{equation}
\end{lemma}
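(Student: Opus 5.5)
We imitate the proof of \cref{lem:J-Omega}, replacing the single dilation $D$ by $D_{m+\ell}$ and the identity of \cref{prop:key-identity} by that of \cref{prop:key}. Since $J$ is an involution, it suffices to show $J\Omega\subseteq\Omega$.

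Let $x\in\Omega$. By \cref{lem:omega-ultrafilter}, write $x=q\mathbf c$ with $q\in\Nstar$. By \cref{lem:ultrafilter-facts}(iii) with $k=m+\ell$, there are a unique $\varepsilon\in\{0,1,\ldots,m+\ell-1\}$ and $p\in\Nstar$ with $q=\varepsilon+D_{m+\ell}p$. The action satisfies $(\varepsilon+u)\mathbf c=T^\varepsilon(u\mathbf c)$ by \eqref{eq:action}, and $J$ commutes with $T$. Applying \cref{prop:key} then gives
\[
 Jx=T^\varepsilon J\bigl((D_{m+\ell}p)\mathbf c\bigr)
 =T^\varepsilon(D_mp+D_\ell p)\mathbf c
 =(\varepsilon+D_mp+D_\ell p)\mathbf c .
\]

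It remains to check that $\varepsilon+D_mp+D_\ell p\in\Nstar$. By \cref{lem:ultrafilter-facts}(ii), $D_\ell p\in\Nstar$ because $p\in\Nstar$. The sum $D_mp+D_\ell p$ has a nonprincipal right summand, so it lies in $\Nstar$ by the same part of the lemma. Adding the principal ultrafilter $\varepsilon$ on the left keeps it in $\Nstar$, using (i) to move $\varepsilon$ to the right if needed. Hence $Jx\in\Omega$ by \cref{lem:omega-ultrafilter}.

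This gives $J\Omega\subseteq\Omega$, and applying $J$ once more yields \eqref{eq:JOmega}. No step is a real obstacle. The only care needed is that \cref{prop:key} holds for every nonprincipal $p$ unconditionally under the standing counterexample assumption of this section, and that the decomposition uses $k=m+\ell$ rather than $k=2$.
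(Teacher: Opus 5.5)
Your proposal is correct and follows the paper's proof essentially verbatim: you decompose $q=\varepsilon+D_{m+\ell}p$, apply \cref{prop:key} after pulling $T^\varepsilon$ out, and note that the resulting ultrafilter is nonprincipal because its rightmost summand $D_\ell p$ is nonprincipal. The only difference is your aside about using (i) to move $\varepsilon$, which is unnecessary, since \cref{lem:ultrafilter-facts}(ii) already allows an arbitrary left summand.
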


\begin{proof}
Take $x=q\mathbf c\in\Omega$, with $q\in\Nstar$. By
\cref{lem:ultrafilter-facts}(iii), write
$q=\varepsilon+D_{m+\ell} p$, where $0\leq\varepsilon<m+\ell$ and $p\in\Nstar$. Then \cref{prop:key} gives
\[
 \begin{aligned}
 Jx
 &=J(\varepsilon+D_{m+\ell} p)\mathbf c
  =T^\varepsilon J(D_{m+\ell} p)\mathbf c\\
 &=T^\varepsilon(D_m p+D_\ell p)\mathbf c
  =(\varepsilon+D_m p+D_\ell p)\mathbf c.
 \end{aligned}
\]
The ultrafilter on the last line is free because its rightmost summand
$D_\ell p$ is free. Hence $Jx\in\Omega$, proving $J\Omega\subset\Omega$.
Applying $J$ once more gives the reverse inclusion.
\end{proof}

\begin{prop}\label{prop:MJM}
If $M\subset\Omega$ is minimal, then
\[
 M=JM.
\]
\end{prop}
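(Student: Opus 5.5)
The plan is to show that $M$ and $JM$ have a common point. Since $J$ commutes with $T$, the set $JM$ is again a minimal subsystem of $\Omega$ by \cref{lem:JOmega}, and two minimal subsystems that meet are equal, so this suffices. As in the proof of \cref{prop:disjoint-minimal}, I would work with the closed left ideal
\[
 \mathcal I_M=\{q\in\bN:q\mathbf c\in M\}.
\]
For $k\in\N$ I would also consider
\[
 \mathcal L_k=\{p\in\bN:(D_kp)\mathbf c\in M\}.
\]

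Here are the basic observations.

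\begin{itemize}
\item[(a)] Each $\mathcal L_k$ is a closed left ideal. Indeed, by \eqref{eq:Dk-hom}, $D_k(s+p)=D_ks+D_kp$, and $\mathcal I_M$ is a left ideal.
\item[(b)] Each $\mathcal L_k$ contains a nonprincipal ultrafilter. Take any $q\in\mathcal I_M\cap\Nstar$ and write $q=\varepsilon+D_kp$ by \cref{lem:ultrafilter-facts}(iii). Then $(D_kp)\mathbf c=T^{-\varepsilon}(q\mathbf c)\in M$, because $M$ is $T^{\pm1}$-invariant.
\item[(c)] If $p\in\mathcal L_\ell\cap\Nstar$, then $D_mp+D_\ell p\in\mathcal I_M$, since $\mathcal I_M$ is a left ideal. \cref{prop:key} then gives $(D_{m+\ell}p)\mathbf c\in JM$.
\end{itemize}

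Consequently, any $p\in\Nstar\cap\mathcal L_\ell\cap\mathcal L_{m+\ell}$ produces the point $(D_{m+\ell}p)\mathbf c\in M\cap JM$, and the proposition follows.

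The main step, and the one I expect to be the obstacle, is to show that the two left ideals $\mathcal L_\ell$ and $\mathcal L_{m+\ell}$ have a common nonprincipal element. Left ideals of $\bN$ need not intersect in general, so their specific form has to be used. By \eqref{eq:delta-ultra} and $\Delta_k\mathbf c=\mathbf c$, we have $(D_kp)\mathbf c=\Delta_k(p\mathbf c)$. Thus $p\in\mathcal L_k$ if and only if $\Delta_k(p\mathbf c)\in M$, and it suffices to find a point $x\in\Omega$ with
\[
 \Delta_\ell x\in M
 \qquad\text{and}\qquad
 \Delta_{m+\ell}x\in M.
\]
Here is how I would look for such an $x$. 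The set $\Delta_\ell^{-1}(M)\cap\Omega$ is closed, and it is nonempty by (b). By $\Delta_\ell T=T^\ell\Delta_\ell$, it is invariant under $T$. Choose a minimal subsystem $M_1$ inside it. Then $\Delta_{m+\ell}(M_1)$ is $T^{m+\ell}$-invariant, and its $T$-saturation is a minimal subsystem $M_2\subseteq\Omega$.

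Applying (c) along $M_1$ shows that $\Delta_{m+\ell}(M_1)\subseteq JM$. Hence $M_2=JM$, since both are minimal and they meet. So the remaining question is whether $M_2$ also meets $M$.

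My first attempt would be a symmetrization. I would run the same construction starting from $JM$ instead of $M$, which gives a minimal set $M_1'$ with $\Delta_\ell M_1'\subseteq JM$ and $\Delta_{m+\ell}M_1'\subseteq M$. I would then use the fact that $J$ commutes with all $\Delta_k$ to swap the roles, and use \cref{lem:power-components} to choose the $T^{m+\ell}$- and $T^\ell$-minimal components compatibly. The idea is to find a single point in $\Omega$ whose two dilations land in $M$ simultaneously.

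If that fails, my fallback is the idempotent trick from \cref{prop:disjoint-minimal}. I would take a minimal idempotent $e\in\mathcal L_\ell$ and show, via $D_k(e+e)=D_ke+D_ke$, that $e+s$ lies in $\mathcal L_{m+\ell}$ for a suitable $s\in\mathcal L_{m+\ell}$ while remaining in $\mathcal L_\ell$.
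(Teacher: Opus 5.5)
Your observations (a)--(c) are correct, and the reduction to finding $p\in\Nstar\cap\mathcal L_\ell\cap\mathcal L_{m+\ell}$ is valid. But that reduction is circular, so the proposal has a genuine gap. Take any $x\in\Omega$ with $\Delta_\ell x\in M$ and write $x=p\mathbf c$ with $p\in\Nstar$. Then (c) gives $J\Delta_{m+\ell}x=(D_mp)(\Delta_\ell x)\in M$, that is,
\[
 \Omega\cap\Delta_\ell^{-1}(M)\subseteq\Delta_{m+\ell}^{-1}(JM).
\]
Hence a point with $\Delta_\ell x\in M$ and $\Delta_{m+\ell}x\in M$ exists if and only if $M\cap JM\neq\emptyset$. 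Your ``main step'' is the proposition restated, as your own conclusion $M_2=JM$ already shows.

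Neither proposed attack adds information. Since $J\Omega=\Omega$ and $J$ commutes with $T$ and every $\Delta_k$, the symmetrized construction can be taken to be $M_1'=JM_1$. Moreover, the displayed inclusion holds pointwise on all of $\Omega$, so no compatible choice of components via \cref{lem:power-components} can escape it.

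The idempotent route points the wrong way. For $m=\ell=1$, which \cref{prop:MJM} covers, \cref{prop:disjoint-minimal} uses exactly an idempotent in $\mathcal I_M$, the key identity and $\Delta_2$ to prove $M\cap JM=\emptyset$. In your notation, that means $\mathcal I_M=\mathcal L_1$ and $\mathcal L_2$ share no element of $\Nstar$. So the ultrafilter algebra of the key identity pushes toward disjointness. An argument of the kind you sketch that produced a common element would by itself refute the standing hypothesis, i.e.\ it would be a new proof of \cref{main1}, and nothing in your sketch supplies that force.

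The missing ingredient is combinatorial, and the paper argues by contradiction:
\begin{enumerate}
\item Suppose $M\cap JM=\emptyset$. Then \cref{lem:JOmega,lem:clopen-domain} give a clopen $H\supseteq M$ with $\Omega=H\sqcup JH$.
\item \cref{lem:asymptotic-binary-coding} turns this into a binary sequence $h$ whose $1$-class is thick, because $M\subseteq H$. It also satisfies $(D_mp+D_\ell p)h=J(D_{m+\ell}p)h$ for all $p\in\Nstar$.
\item Take a free ultrafilter containing a would-be witness $B$. Reading off the zeroth coordinate shows that $h$ admits no configuration \eqref{eq:weighted-target}.
\item A thick class contains arbitrarily long progressions with difference $m+\ell$ starting at multiples of $m+\ell$. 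So $h$ is $(m,\ell)$-admissible, and \cref{thm2-1} (proved in the appendix by a Ramsey-type argument) produces such a configuration, a contradiction.
\end{enumerate}
This external input, going beyond the key identity and ultrafilter algebra, is what your proposal lacks.
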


\begin{proof}
By \cref{lem:JOmega}, $JM$ is minimal. Hence either $M=JM$ or
$M\cap JM=\emptyset$. Assume the latter. By
\cref{lem:clopen-domain}, there is a clopen set $H\subset\Omega$ such
that
\[
 M\subset H,
 \qquad
 \Omega=H\sqcup JH.
\]
Applying \cref{lem:asymptotic-binary-coding} to $X_{\mathbf c}$, $\mathbf c$,
$\Omega$, $J$, and $H$, we obtain a binary sequence
$h:\Nzero\to\{0,1\}$ such that $h^{-1}(1)\cap\N$ is thick in $\N$. For every
$p\in\Nstar$, the ultrafilters $D_{m+\ell}p$ and
$D_mp+D_\ell p$ are free. Note that  Hence \cref{prop:key} and the coding
conclusion give
\begin{equation}\label{eq:key-h}
 (D_m p+D_\ell p)h=J(D_{m+\ell} p)h.
\end{equation}

We show that $h$ has no target configuration. Suppose an infinite
$B\subset\N$ and a color $\gamma$ satisfy
\[
 h((m+\ell)x)=h(mx+\ell y)=\gamma
 \qquad(x,y\in B,\ x<y).
\]
Choose a free ultrafilter $p$ containing $B$ and put
$A_\gamma=h^{-1}(\gamma)$. The diagonal condition gives
$A_\gamma\in D_{m+\ell} p$. For each $x\in B$, the set
\[
 \{y:mx+\ell y\in A_\gamma\}
\]
contains $B\cap(x,\infty)$, which belongs to $p$. Hence
\[
 \{x:\{y:mx+\ell y\in A_\gamma\}\in p\}\in p,
\]
so $A_\gamma\in D_m p+D_\ell p$. Taking the zeroth coordinate in
\eqref{eq:key-h} gives $\gamma=1-\gamma$, a contradiction.  Thus $h$ has no target
configuration.

On the other hand, $h^{-1}(1)\cap\N$ is thick. Fix $L\geq1$. There is $n\geq1$
	such that
	$
	\{n,n+1,\ldots,n+(m+l)L\}\subseteq h^{-1}(1)\cap\N$.
	Let $r_L$ be the least integer not smaller than $n$ with $(m+l)|r_L$.  Then
	$
	r_L,r_L+(m+l),\ldots,r_L+(L-1)(m+l)\in h^{-1}(1)\cap\N.
	$
Thus the hypothesis of \cref{thm2-1} holds.  That theorem gives  that $h$ has target configuration, a contradiction. 
This
contradiction proves
$JM=M$. This completes the proof.
\end{proof}

\subsection{Dilation subsystems and maximal equicontinuous factors}\label{sec:weighted-mef}

For a minimal subsystem $M\subset\Omega$ and $k\in\N$, define
\begin{equation}\label{eq:Gamma}
 \Gamma_k(M)=\bigcup_{r=0}^{k-1}T^r\Delta_k(M).
\end{equation}
If $x=p\mathbf c\in M$ with $p\in\Nstar$, then
$\Delta_kx=(D_k p)\mathbf c\in\Omega$; hence
$\Gamma_k(M)\subset\Omega$. The map $\Delta_k$ intertwines $T$ with
$T^k$, so $(\Delta_k(M),T^k)$ is a factor of the minimal system $(M,T)$
and is minimal.  The union in \eqref{eq:Gamma} adds the $k$ possible
phases and is $T$-invariant: the last phase returns to the first because
$T^k\Delta_k(M)=\Delta_k(M)$.  If $z=\Delta_kx$ with $x\in M$, then for
$0\leq r<k$,
\[
 T^{kn+r}z=T^r\Delta_k(T^nx).
\]
As $\{T^nx:n\geq0\}$ is dense in $M$, the $T$-orbit of $z$ is dense in
$\Gamma_k(M)$. Thus $\Gamma_k(M)$ is a minimal $T$-system. By \cref{prop:MJM} and
\eqref{eq:delta-relations}, it is also $J$-invariant.

\begin{lemma}\label{lem:Gamma-equal}
For every minimal $M\subset\Omega$,
\[
 \Gamma_\ell(M)=\Gamma_{m+\ell}(M).
\]
\end{lemma}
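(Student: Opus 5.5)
The plan is to show that the two minimal $T$-systems $\Gamma_\ell(M)$ and $\Gamma_{m+\ell}(M)$ have a common point. This suffices: as observed just before the lemma, both are minimal subsystems of $\Omega$, and two minimal subsystems that intersect coincide, since their intersection is a nonempty closed $T$-invariant subset of each.

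To produce the common point, fix any $x\in M$. Since $M\subset\Omega$, \cref{lem:omega-ultrafilter} lets us write $x=p\mathbf c$ with $p\in\Nstar$. By \eqref{eq:delta-ultra} and $\Delta_k\mathbf c=\mathbf c$ we have $\Delta_k x=(D_kp)\mathbf c$ for every $k$. Combining this with \eqref{eq:action} and \cref{prop:key} gives
\[
 J\Delta_{m+\ell}x
 =J(D_{m+\ell}p)\mathbf c
 =(D_mp+D_\ell p)\mathbf c
 =D_mp\bigl((D_\ell p)\mathbf c\bigr)
 =D_mp\bigl(\Delta_\ell x\bigr).
\]
Now $\Delta_\ell x\in\Delta_\ell(M)\subset\Gamma_\ell(M)$. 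The set $\Gamma_\ell(M)$ is closed and $T$-invariant, so every forward iterate $T^n\Delta_\ell x$ lies in it, and hence so does the ultrafilter limit $D_mp(\Delta_\ell x)$. Therefore $J\Delta_{m+\ell}x\in\Gamma_\ell(M)$.

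It remains to see that $J\Delta_{m+\ell}x\in\Gamma_{m+\ell}(M)$. By \eqref{eq:delta-relations}, $J\Delta_{m+\ell}x=\Delta_{m+\ell}(Jx)$, and $Jx\in M$ by \cref{prop:MJM}, so this point lies in $\Delta_{m+\ell}(M)\subset\Gamma_{m+\ell}(M)$. (Equivalently, we may cite the $J$-invariance of $\Gamma_{m+\ell}(M)$ noted just before the lemma.) Thus $J\Delta_{m+\ell}x$ belongs to both systems, and minimality gives $\Gamma_\ell(M)=\Gamma_{m+\ell}(M)$.

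The only delicate step is the chain of equalities above. It requires using \eqref{eq:action} in the correct order: the $D_mp$-limit is applied to the point $(D_\ell p)\mathbf c$, not the other way round. This order is exactly what the convention $A\in p+q \Leftrightarrow \{n:A-n\in q\}\in p$ and $(p+q)x=p(qx)$ prescribes. The rest is bookkeeping with facts already established.
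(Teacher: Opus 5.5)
Your proposal is correct and follows the paper's proof essentially verbatim. You derive the same identity $J\Delta_{m+\ell}x=(D_mp)(\Delta_\ell x)$ from \cref{prop:key} and \eqref{eq:delta-ultra}, place the point in $\Gamma_\ell(M)$ by $\beta\Nzero$-invariance and in $\Gamma_{m+\ell}(M)$ via $J$-invariance, and conclude from the fact that intersecting minimal systems are equal.
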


\begin{proof}
Take $y=p\mathbf c\in M$ with $p\in\Nstar$. By \cref{prop:key}
and \eqref{eq:delta-ultra},
\[
 J\Delta_{m+\ell}y=(D_m p)(\Delta_\ell y).
\]
The right-hand side belongs to $\Gamma_\ell(M)$ because that compact
$T$-invariant set is also invariant under the $\bN$-action.  The
left-hand side belongs
to $J\Gamma_{m+\ell}(M)$, which equals $\Gamma_{m+\ell}(M)$.  Thus the two
minimal systems intersect and hence are equal.
\end{proof}

Write
\[
 N=\Gamma_\ell(M)=\Gamma_{m+\ell}(M).
\]
Let
\[
 \pi_M:M\to Z_M,
 \qquad
 \pi_N:N\to Z_N
\]
be the maximal equicontinuous factors. Choose origins so that $Z_M$ and $Z_N$ are compact monothetic abelian
groups on which $T$ acts by addition of $g_M$ and $g_N$, respectively. The
cyclic subgroup generated by each of these elements is dense.

Applying \cref{lem:mef-affine}(i) to $J$, there are
$h_M\in Z_M$ and $h_N\in Z_N$ such that
\begin{equation}\label{eq:Jfactor}
 \pi_M(Jx)=\pi_M(x)+h_M,
 \qquad
 \pi_N(Jy)=\pi_N(y)+h_N.
\end{equation}
Since $J^2=\mathrm{id}$,
\begin{equation}\label{eq:twoh}
 2h_M=2h_N=0.
\end{equation}

For $k\in\{\ell,m+\ell\}$, the map
\[
 f_k=\pi_N\circ\Delta_k:M\to Z_N
\]
satisfies $f_k(Tx)=f_k(x)+kg_N$. \cref{lem:mef-affine}(ii) therefore
gives a continuous homomorphism $A_k:Z_M\to Z_N$ and a constant
$\zeta_k\in Z_N$ such that
\begin{equation}\label{eq:fk-affine}
 \pi_N(\Delta_kx)=A_k\pi_M(x)+\zeta_k,
 \qquad
 A_k(g_M)=kg_N.
\end{equation}
For an integer $r$ and a homomorphism $B:Z_M\to Z_N$, write
$rB$ for the homomorphism $z\mapsto rB(z)$. Note that 
\[\ell A_{m+\ell}(rg_M)=\ell(m+\ell)rg_N=(m+\ell)A_\ell(rg_M), \forall r\in\mathbb{N}.\] Since the cyclic subgroup
generated by $g_M$ is dense,
\[
 (m+\ell)A_\ell=\ell A_{m+\ell}.
\]
Since $\gcd(\ell,m+\ell)=\gcd(\ell,m)=1$, choose $u,v\in\Z$ with
$u\ell+v(m+\ell)=1$ and put
$A=uA_\ell+vA_{m+\ell}$. Then
\[
 \begin{aligned}
 \ell A
 &=u\ell A_\ell+v\ell A_{m+\ell}
   =\bigl(u\ell+v(m+\ell)\bigr)A_\ell=A_\ell,\\
 (m+\ell)A
 &=u(m+\ell)A_\ell+v(m+\ell)A_{m+\ell}\\
 &=\bigl(u\ell+v(m+\ell)\bigr)A_{m+\ell}=A_{m+\ell}.
 \end{aligned}
\]
Thus
\begin{equation}\label{eq:AkA}
 A_\ell=\ell A,
 \qquad
 A_{m+\ell}=(m+\ell)A.
\end{equation}
To compare the action of $J$, evaluate
$\pi_N(\Delta_kJx)=\pi_N(J\Delta_kx)$ by
\eqref{eq:fk-affine} and \eqref{eq:Jfactor}.  The constants
$A_k\pi_M(x)+\zeta_k$ cancel, giving
\begin{equation}\label{eq:Ah-relations}
 A_\ell h_M=h_N,
 \qquad
 A_{m+\ell}h_M=h_N.
\end{equation}
Consequently,
\begin{equation}\label{eq:mAh}
 mAh_M=\bigl((m+\ell)-\ell\bigr)Ah_M=0.
\end{equation}

\begin{prop}
\label{prop:Jtrivial}
Assume $\gcd(m,\ell)=1$. Then
\[
 h_N=0.
\]
Equivalently, $J$ acts trivially on the maximal equicontinuous factor of $N$.
\end{prop}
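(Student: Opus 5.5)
The plan is to first squeeze out everything the torsion relations already give, and then deal with a single residual two-torsion phase separately. Put $w=Ah_M\in Z_N$. By \eqref{eq:twoh}, $2w=A(2h_M)=0$, and by \eqref{eq:mAh}, $mw=0$. Moreover \eqref{eq:Ah-relations} and \eqref{eq:AkA} give
\[
 h_N=\ell w=(m+\ell)w .
\]
If $m$ is odd, then $\gcd(2,m)=1$, so $2w=mw=0$ forces $w=0$ and hence $h_N=0$. If $m$ is even, then $\ell$ is odd because $\gcd(m,\ell)=1$, and $2w=0$ gives $h_N=\ell w=w$. So the relations alone only show that $h_N$ is an element of order at most $2$. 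The remaining task is to rule out $h_N\neq0$ when $m$ is even.

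Suppose $m$ is even and $h_N\neq0$. By Pontryagin duality for the compact abelian group $Z_N$, there is a continuous character $\chi:Z_N\to\Torus$ with $\chi(h_N)=\tfrac12$. Put $\psi=\chi\circ\pi_N:N\to\Torus$. By \eqref{eq:Jfactor},
\[
 \psi(Jy)=\psi(y)+\tfrac12,
 \qquad
 \psi(Ty)=\psi(y)+\alpha,
 \qquad
 \alpha:=\chi(g_N).
\]
The set $N$ is closed and $J$-invariant, and $\Omega$ is zero-dimensional with $J$ fixed-point free. Hence \cref{lem:equivariant-circle-extension} extends $\psi$ to a continuous $\bar\psi:\Omega\to\Torus$ with $\bar\psi(Jx)=\bar\psi(x)+\tfrac12$. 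This is the circle extension that records the phase.

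Next I would code this phase. Fix a small $\delta>0$. The compact set $K=\bar\psi^{-1}([-\tfrac14+\delta,\tfrac14-\delta])$ satisfies $K\cap JK=\emptyset$, so \cref{lem:clopen-domain} gives a clopen $H\supset K$ with $\Omega=H\sqcup JH$. Then \cref{lem:asymptotic-binary-coding} yields a binary sequence $h$. Exactly as in the proof of \cref{prop:MJM}, \cref{prop:key} transfers to \eqref{eq:key-h}. The same ultrafilter argument then shows that $h$ has no target configuration.

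It then remains to show that $h$ is $(m+\ell)$-admissible in the sense of \cref{defn4-1}, so that \cref{thm2-1} gives the contradiction. Pick a point $y\in N$ with $\psi(y)=0$. Choose $d\in(m+\ell)\N$ with $d\alpha$ very close to $0$ in $\Torus$, which is possible by recurrence of the rotation. Then $\psi(T^{kd}y)$ stays in $[-\tfrac14+\delta,\tfrac14-\delta]$ for $0\le k\le n$, where $n$ is as large as we like relative to $\delta$. Hence $T^{kd}y\in H$ for these $k$. Since $N\subset\Omega=\omega(\mathbf c)$, the argument in the last part of \cref{lem:asymptotic-binary-coding} (using the open set $\mathcal O$) produces arbitrarily large $x$ with $h(x+kd)=1$ for $0\le k\le n$.

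The main obstacle is making this compatible with \cref{defn4-1}. That definition needs a single $d$ working for all $n$, and starting points that are multiples of $m+\ell$. I would try first to handle both by passing to the $T^{m+\ell}$-minimal component of $N$ containing $y$, via \cref{lem:power-components} and \cref{lem:power-mef}. Inside that component $\psi$ is still a rotation phase, so the returns of $T^{n}\mathbf c$ near $y$ can be taken with $n\equiv0\pmod{m+\ell}$. For a uniform $d$, if $\alpha$ is rational I take $d$ to be $(m+\ell)$ times its denominator, so that $d\alpha=0$. If $\alpha$ is irrational, I would instead choose $H$ adapted to a finite-image approximation of $\bar\psi$ and use lower-density-free progressions along a fixed $d$ with $d\alpha$ small relative to $\delta$, shrinking $\delta$ accordingly. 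This is the step whose details I expect to require the most care.
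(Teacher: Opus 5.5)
Your case $m$ odd, and your reduction for $m$ even to $h_N=Ah_M$ having order $2$, match the paper. The gap is in the admissibility step, and it is structural, not a matter of details.

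\textbf{Why your coding cannot give admissibility.} Any clopen $J$-fundamental domain $H\subset\Omega$ codes a sequence with no target configuration. So everything rests on finding one fixed $d\in(m+\ell)\N$ with arbitrarily long progressions of one color. Take limit points of the starting positions: this forces a $T^d$-minimal subset of $\Omega$ lying in $H$ or in $JH$, hence disjoint from its $J$-image.

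Your construction controls $H$ only along $N$. There the argument fails when $\alpha=\chi(g_N)$ is irrational. For fixed $d$, the phases $\psi(y)+kd\alpha$, $0\le k\le n$, enter $[\tfrac14+\delta,\tfrac34-\delta]$ once $n$ is large, and on that arc $N$ meets $JK\subset JH$.

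It is worse than that. Suppose $h_N\in\overline{\{jdg_N:j\in\Z\}}$ for every $d$ (e.g.\ $Z_N=\Torus$, $g_N$ irrational, $h_N=\tfrac12$). Then every $T^d$-minimal component of $N$ is $\pi_N$-saturated by \cref{lem:power-mef}, hence $J$-invariant. So no choice of $H$ helps inside $N$.

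What you actually obtain is, for each $n$, some $d_n$ with a monochromatic progression of length $n$. Van der Waerden's theorem gives this for every $2$-coloring, so it proves nothing. The suggested ``finite-image approximation'' does not change this. When $\alpha$ is rational your idea can be completed: $\psi$ is constant on $T^{(m+\ell)q}$-components, and $J$ moves them. The irrational case is the real one.

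\textbf{The missing idea: the circle extension.}
\begin{itemize}
\item Pass to $\widetilde{\mathcal X}=\mathcal X\times\Torus$ with $\widetilde T(x,t)=(Tx,t+\alpha)$, $\widetilde J(x,t)=(Jx,t)$, and $\widetilde{\mathbf c}=(\mathbf c,0)$.
\item The map $\eta(p)=p\text{-}\lim_n n\alpha$ is additive with $\eta(D_kp)=k\eta(p)$. So both sides of \eqref{eq:key} have second coordinate $(m+\ell)\eta(p)$, and the key identity lifts. Any clopen $\widetilde J$-fundamental domain in $\widetilde\Omega$ therefore still codes a sequence with no target configuration.
\item In the extension, the phase $t-\psi(x)$ is $\widetilde T$-invariant and $\widetilde J$ shifts it by $\tfrac12$. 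Hence a minimal lift $\widetilde M$ of $M$ is disjoint from $\widetilde J\widetilde M$ and can be placed entirely inside $H$.
\item The coded sequence then has a thick color class, so \cref{thm2-1} applies with no fixed-$d$ bookkeeping.
\end{itemize}

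Two further ingredients are needed:
\begin{itemize}
\item $\widetilde\Omega$ is not zero-dimensional, so clopenness of $H$ is not automatic. The paper works with $\psi=\chi\circ A\circ\pi_M$ on $M$, not $\chi\circ\pi_N$ on $N$. From $(D_mp)(\Delta_\ell x)=J\Delta_{m+\ell}x$ with $t=\eta(p)$ it derives the finite-root relation $m(t-\psi(x))=\kappa$ above $M$. This confines $t-\bar\psi(x)$ to finitely many small arcs over a clopen neighbourhood of $M$, which gives a clopen $H$.
\item This is exactly where $m$ even is used: translation by $\tfrac12$ then permutes the $m$ roots of $mr=\kappa$ without fixed points.
\end{itemize}
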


\begin{proof}
If $m$ is odd, then \eqref{eq:twoh} gives
$2Ah_M=0$, while \eqref{eq:mAh} gives $mAh_M=0$. Since
$\gcd(2,m)=1$, it follows that $Ah_M=0$. Equation \eqref{eq:AkA} and 
\eqref{eq:Ah-relations} then gives $h_N=\ell Ah_M=0$.

Assume now that $m$ is even. Since $\gcd(m,\ell)=1$, both $\ell$ and
$m+\ell$ are odd. Suppose, toward a contradiction, that $h_N\neq0$.
The purpose of the next construction is to make this possible nonzero
two-torsion translation visible as a circle phase.  The affine identity
will lift to the circle extension, while the phase relation over $M$ will
separate a minimal subsystem from its color complement.  A clopen coding
of that separation will then contradict the thick-cell lemma.
By Pontryagin duality, continuous characters separate points of compact
abelian groups \cite[Theorem~1.7.2]{Rudin62}, so there is a continuous homomorphism
$\chi:Z_N\to\Torus$ with
$\chi(h_N)\neq0$. Since $2h_N=0$, the element $\chi(h_N)$ is a nonzero
two-torsion point of $\Torus$, and hence
\begin{equation}\label{eq:chi-h}
 \chi(h_N)=\tfrac12.
\end{equation}
Put $\alpha=\chi(g_N)$. For $p\in\beta\Nzero$, define
\[
 \eta(p)=p\text{-}\lim_n n\alpha\in\Torus.
\]
For the rotation $R_\alpha(t)=t+\alpha$, the action of $p$ is translation by
$\eta(p)$. Hence
\[
 (p+q)0=p(q0)=\eta(q)+\eta(p),
\]
which gives the first identity below; the second follows by taking the
$p$-limit of $kn\alpha$:
\begin{equation}\label{eq:eta-hom}
 \eta(p+q)=\eta(p)+\eta(q),
 \qquad
 \eta(D_k p)=k\eta(p).
\end{equation}

Consider
\[
 \widetilde{\mathcal X}=\mathcal X\times\Torus,
 \qquad
 \widetilde T(x,t)=(Tx,t+\alpha),
 \qquad
 \widetilde J(x,t)=(Jx,t),
 \qquad
 \widetilde{\mathbf c}=(\mathbf c,0),
\]
and put $\widetilde\Omega=\omega(\widetilde{\mathbf c})$. For
$p\in\Nstar$,
\[
 p\widetilde{\mathbf c}=(p\mathbf c,\eta(p)).
\]
The affine identity lifts to the extension:
\begin{equation}\label{eq:lifted-key}
 (D_m p+D_\ell p)\widetilde{\mathbf c}
 =\widetilde J(D_{m+\ell} p)\widetilde{\mathbf c}
 \qquad(p\in\Nstar).
\end{equation}
Indeed, the first coordinates agree by \cref{prop:key}; by
\eqref{eq:eta-hom}, both second coordinates are $(m+\ell)\eta(p)$.

The lifted omega-limit set is also invariant under color
complementation:
\begin{equation}\label{eq:JtildeOmega}
 \widetilde J\widetilde\Omega=\widetilde\Omega.
\end{equation}
Take $z=q\widetilde{\mathbf c}\in\widetilde\Omega$ with $q\in\Nstar$.
By \cref{lem:ultrafilter-facts}(iii), write
$q=\varepsilon+D_{m+\ell} p$, where $0\le\varepsilon<m+\ell$ and $p\in\Nstar$. Put
\[
 q'=\varepsilon+D_m p+D_\ell p.
\]
The rightmost summand $D_\ell p$ is free, so $q'\in\Nstar$.
\Cref{prop:key} gives
\[
 q'\mathbf c
 =T^\varepsilon(D_m p+D_\ell p)\mathbf c
 =T^\varepsilon J(D_{m+\ell} p)\mathbf c
 =J(q\mathbf c),
\]
while \eqref{eq:eta-hom} gives
\[
 \eta(q')=\varepsilon\alpha+(m+\ell)\eta(p)=\eta(q).
\]
Thus $q'\widetilde{\mathbf c}=\widetilde Jz$, proving one inclusion in
\eqref{eq:JtildeOmega}; the reverse follows from $\widetilde J^2=\mathrm{id}$.

Let
\[
 \widetilde K_M
 =\{(x,t)\in\widetilde\Omega:x\in M\}.
\]
Thus $\widetilde K_M$ is exactly the inverse image of $M$ under the
first-coordinate projection restricted to $\widetilde\Omega$. In
particular, every point of $\widetilde\Omega$ whose first coordinate lies
in $M$ belongs to $\widetilde K_M$. Since $M$ is closed and $T$-invariant,
$\widetilde K_M$ is compact and $\widetilde T$-invariant.
By \cref{lem:omega-ultrafilter}, the first-coordinate projection maps
$\widetilde\Omega$ onto $\Omega$: if $x=p\mathbf c\in\Omega$, then
$p\widetilde{\mathbf c}=(x,\eta(p))\in\widetilde\Omega$. Hence
$\widetilde K_M$ is nonempty.
Choose a minimal subsystem $\widetilde M\subset\widetilde K_M$. Its first
coordinate projection is a nonempty closed invariant subset of the minimal
system $M$, and hence equals $M$.  Thus $\widetilde M$ is a genuine
minimal lift of $M$ on which the additional coordinate records the phase.

To determine the phases occurring above $M$, use
\eqref{eq:AkA} and
\eqref{eq:fk-affine},
\[
 \ell\bigl(A(g_M)-g_N\bigr)=0,
 \qquad
 (m+\ell)\bigl(A(g_M)-g_N\bigr)=0.
\]
Since $\gcd(\ell,m+\ell)=1$,
\begin{equation}\label{eq:Ag}
 A(g_M)=g_N.
\end{equation}
Define
\[
 \psi=\chi\circ A\circ\pi_M:M\to\Torus.
\]
Equations \eqref{eq:Ag} and \eqref{eq:chi-h} imply
\begin{equation}\label{eq:psi-dynamics}
 \psi(Tx)=\psi(x)+\alpha.
\end{equation}
Moreover, $Ah_M=h_N$: indeed, $\ell Ah_M=h_N$ by
\eqref{eq:Ah-relations}, $2Ah_M=0$, and multiplication by the odd integer
$\ell$ is the identity on two-torsion. Therefore by \eqref{eq:Jfactor} and \eqref{eq:chi-h} 
\begin{equation}\label{eq:psi-J}
 \psi(Jx)=\psi(x)+\tfrac12.
\end{equation}

Put $d_k=\chi(\zeta_k)$ for $k\in\{\ell,m+\ell\}$. Let
$(x,t)\in\widetilde K_M$. By \cref{lem:omega-ultrafilter}, choose $p\in\Nstar$ with
$(x,t)=p\widetilde{\mathbf c}$. Then $x=p\mathbf c$ and $t=\eta(p)$. By
\eqref{eq:delta-ultra} and \cref{prop:key},
\begin{equation}\label{eq:anchored-key}
 (D_m p)(\Delta_\ell x)=J\Delta_{m+\ell}x.
\end{equation}
Indeed, the left side is
$(D_mp+D_\ell p)\mathbf c$, while the right side is
$J(D_{m+\ell}p)\mathbf c$.
For every $y\in N$ and $q\in\beta\Nzero$, equivariance of $\pi_N$ and
continuity of $\chi$ give
\[
 \chi\pi_N(qy)=\chi\pi_N(y)+\eta(q).
\]
Applying this with $q=D_m p$ to the left side of
\eqref{eq:anchored-key}, and then using \eqref{eq:fk-affine}, \eqref{eq:AkA} and \eqref{eq:eta-hom}, gives
\[
 \begin{aligned}
 \chi\pi_N\bigl((D_m p)(\Delta_\ell x)\bigr)
 &=\chi\pi_N(\Delta_\ell x)+\eta(D_m p)\\
 &=\chi\pi_N(\Delta_\ell x)+m\eta(p)\\
 &=\ell\psi(x)+d_\ell+mt.
 \end{aligned}
\]
For the right side, \eqref{eq:chi-h}, \eqref{eq:Jfactor} and \eqref{eq:fk-affine}, \eqref{eq:AkA} give
\[
 \chi\pi_N(J\Delta_{m+\ell}x)
 =\chi\pi_N(\Delta_{m+\ell}x)+\tfrac12
 =(m+\ell)\psi(x)+d_{m+\ell}+\tfrac12.
\]
Equating the two expressions yields
\[
 \ell\psi(x)+d_\ell+mt
 =(m+\ell)\psi(x)+d_{m+\ell}+\tfrac12.
\]
Thus every point of $\widetilde K_M$ satisfies
\begin{equation}\label{eq:finite-root}
 m\bigl(t-\psi(x)\bigr)=\kappa, \text{ where }
 \kappa=d_{m+\ell}-d_\ell+\tfrac12.
\end{equation}
Let
\[
 \mathcal R=\{r\in\Torus:mr=\kappa\}.
\]
This set has exactly $m$ elements. The continuous function
\[
 \rho_0(x,t)=t-\psi(x)
\]
is $\widetilde T$-invariant on $\widetilde K_M$ by
\eqref{eq:psi-dynamics}. Since every orbit in the minimal system
$\widetilde M$ is dense, this continuous invariant function is constant there;
write its value as $r_0$. Equation \eqref{eq:psi-J} shows that it is equal to
$r_0-1/2$ on $\widetilde J\widetilde M$. Since $m$ is even, for every $r\in\mathcal R$ one has
\[
 m(r-\tfrac12)=mr-\tfrac m2=\kappa\quad\text{in }\Torus.
\]
Thus translation by $-1/2$ permutes $\mathcal R$, and it has no fixed point.
If a point belonged to both $\widetilde M$ and
$\widetilde J\widetilde M$, the function $\rho_0$ would take there both
values $r_0$ and $r_0-1/2$, which is impossible. Consequently,
\begin{equation}\label{eq:lifted-disjoint}
 \widetilde M\cap\widetilde J\widetilde M=\emptyset.
\end{equation}

The separation in \eqref{eq:lifted-disjoint} can be encoded by a clopen
$\widetilde J$-fundamental domain containing $\widetilde M$.
Apply \cref{lem:equivariant-circle-extension} with $K=\Omega$, $E=M$,
and the map $\psi$ defined above. The set $M$ is closed and
$J$-invariant by \cref{prop:MJM}, while \eqref{eq:psi-J} is precisely the
required equivariance. The lemma gives an extension
$\bar\psi:\Omega\to\Torus$ of $\psi$ satisfying
\[
 \bar\psi(Jx)=\bar\psi(x)+\tfrac12.
\]
Define
\[
 \rho(x,t)=t-\bar\psi(x)
 \qquad((x,t)\in\widetilde\Omega).
\]
Then
\begin{equation}\label{eq:rho-J}
 \rho(\widetilde J(x,t))=\rho(x,t)-\tfrac12.
\end{equation}
Choose pairwise disjoint open arcs $I_r$ around the finitely many points
$r\in\mathcal R$, with pairwise disjoint closures, so that
\begin{equation}\label{eq:arcs-equivariant}
 I_{r-1/2}=I_r-\tfrac12
 \qquad(r\in\mathcal R).
\end{equation}
Let $I=\bigcup_{r\in\mathcal R}I_r$ and
\[
 B_0=\{(x,t)\in\widetilde\Omega:\rho(x,t)\notin I\}.
\]
This is compact. Its first-coordinate projection $P_0$ is disjoint from $M$:
indeed, every point $(x,t)\in\widetilde\Omega$ with $x\in M$ belongs to
$\widetilde K_M$, and there $\rho(x,t)=t-\psi(x)$ lies in $\mathcal R$ by
\eqref{eq:finite-root}. Equations \eqref{eq:rho-J} and
\eqref{eq:arcs-equivariant} show that $B_0$ is $\widetilde J$-invariant;
hence $P_0$ is $J$-invariant. Choose a clopen neighborhood
$W_0$ of $M$ in $\Omega$ with $W_0\cap P_0=\emptyset$, and put
\[
 W=W_0\cap JW_0.
\]
Since $JM=M$, both $W_0$ and $JW_0$ contain $M$.  Thus $W$ is clopen,
$J$-invariant, contains $M$, and
\begin{equation}\label{eq:no-bad-over-W}
 \{(x,t)\in\widetilde\Omega:x\in W\}\cap B_0=\emptyset.
\end{equation}
For $r\in\mathcal R$, set
\[
 E_r=\{(x,t)\in\widetilde\Omega:x\in W,\ \rho(x,t)\in I_r\}.
\]
By \eqref{eq:no-bad-over-W}, the $E_r$ form a finite partition of the clopen
set above $W$. Each $E_r$ is open. Within the clopen set lying above $W$, its complement
is the union of the other $E_s$; hence each $E_r$ is clopen in
$\widetilde\Omega$. Moreover,
\[
 \widetilde JE_r=E_{r-1/2}.
\]
Choose one representative from each pair
$\{r,r-1/2\}\subset\mathcal R$, choosing $r_0$ from its pair, and let
$H_{\mathrm{in}}$ be the union of the corresponding $E_r$. Then
$H_{\mathrm{in}}$ is clopen.  It contains $\widetilde M$ because
$\rho_0=r_0$ there, and
\[
 \{(x,t)\in\widetilde\Omega:x\in W\}
 =H_{\mathrm{in}}\sqcup\widetilde JH_{\mathrm{in}}.
\]
On the clopen $J$-invariant complement $\Omega\setminus W$, apply
\cref{lem:clopen-domain} with $M=\emptyset$ to obtain a clopen
$D\subset\Omega\setminus W$ with
$\Omega\setminus W=D\sqcup JD$. Put
\[
 H_{\mathrm{out}}
 =\widetilde\Omega\cap(D\times\Torus),
 \qquad
 H=H_{\mathrm{in}}\cup H_{\mathrm{out}}.
\]
Then $H$ is clopen in $\widetilde\Omega$, contains $\widetilde M$, and
\begin{equation}\label{eq:Hfundamental}
 \widetilde\Omega=H\sqcup\widetilde JH.
\end{equation}

Apply \cref{lem:asymptotic-binary-coding} to
$\widetilde{\mathbf c}$, $\widetilde\Omega$, and $H$. We obtain a binary
sequence $h:\Nzero\to\{0,1\}$ such that $h^{-1}(1)\cap\N$ is thick in
$\N$; the long blocks of $1$ come from $\widetilde M\subset H$. For
$p\in\Nstar$, both $D_{m+\ell} p$ and $D_m p+D_\ell p$ are free, so
\eqref{eq:lifted-key}, \eqref{eq:Hfundamental}, and
\cref{lem:asymptotic-binary-coding} give
\begin{equation}\label{eq:key-phase-coded}
 (D_m p+D_\ell p)h=J(D_{m+\ell} p)h.
\end{equation}

The sequence $h$ has no infinite monochromatic configuration of the
target form. Indeed, suppose that $B\subset\N$ is an infinite witness of
color $\gamma$. Choose a free ultrafilter $p$ containing $B$, and put
$A_\gamma=h^{-1}(\gamma)$. Since $(m+\ell)x\in A_\gamma$ for every $x\in B$,
\[
 A_\gamma\in D_{m+\ell} p.
\]
For every $x\in B$, the set
\[
 \{y\in\Nzero:mx+\ell y\in A_\gamma\}
\]
contains the $p$-large tail $B\cap(x,\infty)$, and hence belongs to $p$.
Therefore
\[
 \left\{x:\{y:mx+\ell y\in A_\gamma\}\in p\right\}\in p,
\]
which is precisely $A_\gamma\in D_m p+D_\ell p$. Taking the zeroth
coordinate in \eqref{eq:key-phase-coded} gives
$\gamma=1-\gamma$, a contradiction. Thus $h$ has no target
configuration.

On the other hand, $h^{-1}(1)\cap\N$ is thick. Fix $L\geq1$. There is $n\geq1$
	such that
	$
	\{n,n+1,\ldots,n+(m+l)L\}\subseteq h^{-1}(1)\cap\N$.
	Let $r_L$ be the least integer not smaller than $n$ with $(m+l)|r_L$.  Then
	$
	r_L,r_L+(m+l),\ldots,r_L+(L-1)(m+l)\in h^{-1}(1)\cap\N.
	$
Thus the hypothesis of \cref{thm2-1} holds.  That theorem gives  that $h$ has target configuration, a contradiction.  This
contradiction proves
$h_N=0$. The proof is completed.
\end{proof}

\subsection{Return-time construction and completion of the proof}\label{sec:weighted-return}

\begin{thm}\label{thm:reduced}
Suppose that $\gcd(m,\ell)=1$.  Then every two-coloring of $\N$ admits an
infinite set $B\subseteq\N$ for which \eqref{eq:weighted-target} is
monochromatic.
\end{thm}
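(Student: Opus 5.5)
We argue by contradiction, keeping the standing counterexample coloring $b$, the rescaled coloring $c$, the encoded point $\mathbf c$, a minimal $M\subset\Omega$, and $N=\Gamma_\ell(M)=\Gamma_{m+\ell}(M)$ from \cref{sec:weighted-mef}. By \cref{prop:Jtrivial}, $h_N=0$, so $J$ preserves every fibre of $\pi_N$. The goal is an increasing sequence $n_1<n_2<\cdots$ with
\[
 c((m+\ell)n_j)=c(mn_i+\ell n_j)=\gamma\qquad(i<j).
\]
This contradicts the remark after \eqref{eq:c-target}. We build it recursively, following the outline, with $n_j=a+\ell r_j$ for a fixed residue $a$ and $r_j$ drawn from the intersection of a piecewise syndetic set and a thickly syndetic set.

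\textbf{Choosing the setup.} First decompose $M$ into its $T^\ell$-minimal components (\cref{lem:power-components}), and fix one component $M_a$ together with a point $x\in M_a$ of the form $T^a$ applied to a suitable element of $\omega(\mathbf c)$. Fix the coordinate $(m+\ell,0)$. The diagonal condition $c((m+\ell)n)=\gamma$ reads $(T^{n}\mathbf c)_{m+\ell,0}(0)=\gamma$, and this is governed by $\Delta_{m+\ell}$ applied near $M$. Choose $\gamma$ and a clopen cylinder $U$ so that $\Delta_{m+\ell}(M_a)$ meets the cylinder $\{y_{1,0}(0)=\gamma\}$. By \cref{lem:ps-return}, the set $P_j$ of $r$ for which $T^{a+\ell r}\mathbf c$ lies close to $M_a$ inside this cylinder (for the relevant $(m+\ell)$-dilated coordinate) is piecewise syndetic. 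Intersecting with a tail also forces $r_j>r_{j-1}$.

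\textbf{The cross conditions.} For $i<j$ we need $c(mn_i+\ell(a+\ell r))=\gamma$. Once $n_i$ is fixed, this is a return condition for the $\Delta_\ell$-image shifted by $mn_i$. By \eqref{eq:anchored-key}, $(D_mp)(\Delta_\ell x)=J\Delta_{m+\ell}x$, and the relevant points of $N$ at stage $i$ lie over points whose $\pi_N$-fibre contains both a point coloured $\gamma$ and, since $h_N=0$, its $J$-image. So we will define open sets $U_i,V_i$ in $N$ (or in a $T^{\ell m}$-minimal component $C$ of $N$) such that the set $V$ of good targets projects onto the entire factor $\pi(C)$, which is the maximal equicontinuous factor by \cref{lem:power-mef}. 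Then \cref{lem:full-fibre-returns}, applied to $(C,T^{\ell m})$, shows that the corresponding return set is thickly syndetic. A finite intersection of thickly syndetic sets is thickly syndetic, so $H_j=\bigcap_{i<j}(\text{those sets})$ is thickly syndetic. It meets $P_j$, as noted in \cref{sec2}, and we choose $r_j$ there.

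\textbf{Main obstacle.} The hard step is proving that the good set $V$ has full projection $\pi(V)=Z$. Concretely, we must show that over every point of the factor of $N$, some point of the fibre exhibits colour $\gamma$ at the needed coordinate. The plan is to use $h_N=0$: each fibre is $J$-invariant, and $J$ flips the colour coordinate, so every fibre contains both colours. The difficulty is passing from this statement about limit points to open sets that are valid uniformly for the actual orbit of $\mathbf c$ and compatible with the residue $a$ and the $T^{\ell m}$-components. We also must check that the stage-$i$ translate by $mn_i$ lands in the correct component; this is where the choice of component $M_a$ and of $T^{\ell m}$ is made. We expect a compactness and semi-openness argument (\cref{lem:semiopen}) to handle the passage from fibres to open sets.
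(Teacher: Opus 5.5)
There is a genuine gap. Your proposal has the right ingredients, but it never gives the mechanism that makes the cross conditions hold along the actual orbit of $\mathbf c$, and the structure it does commit to would not supply one. \cref{lem:full-fibre-returns} only says that $\{r:U\cap T^{-\ell m r}V\neq\emptyset\}$ is thickly syndetic for open sets $U,V$ of the minimal system $(C,T^{\ell m})$. Knowing that $r_j$ lies in such a set tells you that \emph{some} point of $U$ is carried into $V$. It does not tell you that $T^{\ell n_j}\mathbf c\in T^{-mn_i}C_\gamma$; note that $\mathbf c\notin N$.

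So in your scheme, $P_j$ controls only the diagonal condition and $H_j=\bigcap_{i<j}(\cdots)$ is meant to control the cross conditions. Choosing $r_j\in H_j\cap P_j$ then gives no information on $c(mn_i+\ell n_j)$. The only bridge to the orbit of $\mathbf c$ is \cref{lem:ps-return}. Hence the diagonal condition and \emph{all} cross conditions with earlier $n_i$ must be packed into a single open set, and you must show that this set meets $M_a$.

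There is also an internal mismatch. With $n_j$ as the later element, the varying shift is $T^{\ell n_j}=T^{\ell a}T^{\ell^2 r}$, not the $T^{\ell m r}$ you feed into \cref{lem:full-fibre-returns}. Finally, \cref{lem:ps-return} needs $M_a\subset\omega_{T^\ell}(T^a\mathbf c)$. Your condition ``$x$ is $T^a$ of a point of $\omega(\mathbf c)$'' holds for every $x\in M$, so it does not select anything.

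The missing idea is the paper's nested constraint sets. Take $E=M_a\cap\Delta_{m+\ell}^{-1}C_\gamma$. Semi-openness of $\Delta_\ell:(M_a,T^\ell)\to(Y_a,T^{\ell^2})$ gives a clopen $O_0$ with $\emptyset\neq O_0\cap Y_\ell\subset\Delta_\ell(E)$. Then set $O_j=O_{j-1}\cap T^{-mn_j}C_\gamma$, which records every cross condition in which $n_j$ is the \emph{earlier} element.

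The piecewise syndetic set is $P_j=\{r:T^{a+\ell r}\mathbf c\in W_j\}$, where $W_j=\Delta_{m+\ell}^{-1}C_\gamma\cap\Delta_\ell^{-1}O_{j-1}\cap U_a$. It enforces the diagonal condition, and, since $\Delta_\ell T^{n_j}\mathbf c=T^{\ell n_j}\mathbf c\in O_{j-1}\subset O_i$, all earlier cross conditions at once. The needed fact $W_j\cap M_a\neq\emptyset$ holds precisely because $O_{j-1}\cap Y_\ell$ is nonempty and contained in $\Delta_\ell(E)$.

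Only one thickly syndetic set is used per stage: $H_j=\{r:(O_{j-1}\cap Y_\ell)\cap T^{-\ell m r}V\neq\emptyset\}$, with $V=Y_\ell\cap T^{-ma}C_\gamma$. Its sole job is to keep $O_j\cap Y_\ell\neq\emptyset$ for the next stage. This is where $T^{\ell m}$ genuinely arises, since $mn_j=ma+\ell m r_j$.

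What you flag as the main obstacle, $\pi_N(V)=Z_\ell$, is actually immediate. For $y\in Y_\ell$, either $y\in V$ or $Jy\in V$, because $JY_\ell=Y_\ell$ and $J$ flips colours. Also $\pi_N(Jy)=\pi_N(y)$ because $h_N=0$. No compactness or semi-openness argument is needed there; semi-openness is needed instead for the coupling $O_0\cap Y_\ell\subset\Delta_\ell(E)$ above.
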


\begin{proof}
Continue with the counterexample coloring and affine system fixed in
\cref{sec:weighted-affine}. Choose a minimal subsystem $M\subset\Omega$, let
$N=\Gamma_\ell(M)=\Gamma_{m+\ell}(M)$, and put
\[
 Y_\ell=\Delta_\ell(M)\subset N.
\]
Then $(Y_\ell,T^\ell)$ is minimal, and $Y_\ell$ is a clopen
$T^\ell$-minimal component of $N$ by
\cref{lem:power-components}.  Moreover,
$JY_\ell=Y_\ell$, since $JM=M$ and
$J\Delta_\ell=\Delta_\ell J$.

For $0\leq a<\ell$, put
\[
 \Omega_a=\omega_{T^\ell}(T^a\mathbf c).
\]
Passing to a subsequence of orbit times with a fixed residue modulo
$\ell$ gives
\[
 \omega_T(\mathbf c)=\bigcup_{a=0}^{\ell-1}\Omega_a.
\]
More explicitly, if $T^{n_i}\mathbf c\to x$, pass to a subsequence with
$n_i\equiv a\pmod\ell$ and write
$n_i=a+\ell r_i$, where $r_i\in\Nzero$ and $r_i\to\infty$; then
$x\in\Omega_a$.  The reverse inclusion is immediate.
Choose $x\in M\cap\Omega_a$ for some $a$. Let $M_a$ be the
$T^\ell$-minimal component of $M$ containing $x$. Since $\Omega_a$ is closed
and $T^\ell$-invariant,
\begin{equation}\label{eq:phase-omega}
 M_a\subset\omega_{T^\ell}(T^a\mathbf c).
\end{equation}
Let
\[
 Y_a=\Delta_\ell(M_a).
\]
Since $M_a\subset M$, one has $Y_a\subset Y_\ell$.  The map
$\Delta_\ell$ intertwines $T^\ell$ with $T^{\ell^2}$, so $Y_a$ is
$T^{\ell^2}$-minimal. \cref{lem:power-components}, applied to $(Y_\ell,T^\ell)$, shows
that the $T^{\ell^2}$-minimal components of $Y_\ell$ are clopen. Hence
$Y_a$ is clopen in $Y_\ell$.

For $\gamma\in\{0,1\}$, let
\[
 C_\gamma=\{x\in\mathcal X:x_{1,0}(0)=\gamma\}.
\]
The two clopen sets $M_a\cap\Delta_{m+\ell}^{-1}C_0$ and
$M_a\cap\Delta_{m+\ell}^{-1}C_1$ partition $M_a$. Choose $\gamma$ such that
\[
 E=M_a\cap\Delta_{m+\ell}^{-1}C_\gamma
\]
is nonempty. By \cref{lem:semiopen}, the factor map
\[
 \Delta_\ell:(M_a,T^\ell)\to(Y_a,T^{\ell^2})
\]
is semi-open. Hence $\Delta_\ell(E)$ has nonempty relative interior in
$Y_a$. Choose a nonempty clopen set $O_0^{\rm rel}\subset Y_a$ such that
\begin{equation}\label{eq:O0rel}
 O_0^{\rm rel}\subset\Delta_\ell(E).
\end{equation}
Here we use that $Y_a$, as a subspace of the zero-dimensional shift
$\mathcal X$, has a clopen base.
Since $Y_a$ is clopen in $Y_\ell$, the sets $O_0^{\rm rel}$ and
$Y_\ell\setminus O_0^{\rm rel}$ are disjoint compact subsets of the
zero-dimensional space $\mathcal X$. Hence there is a clopen set
$O_0\subset\mathcal X$ satisfying
\begin{equation}\label{eq:O0ambient}
 O_0\cap Y_\ell=O_0^{\rm rel}.
\end{equation}
Thus $O_0$ is an ambient clopen extension of the relative set on which
the recursion will take place.  Similarly, since $M_a$ is clopen in $M$,
choose a clopen
$U_a\subset\mathcal X$ such that
\begin{equation}\label{eq:Ua}
 U_a\cap M=M_a.
\end{equation}

Define
\begin{equation}\label{eq:V}
 V=Y_\ell\cap T^{-ma}C_\gamma.
\end{equation}
Let $\pi_N:N\to Z_N$ be the maximal equicontinuous factor, let
$R:Z_N\to Z_N$ be the rotation induced by $T$, and put
\[
 Z_\ell=\pi_N(Y_\ell).
\]
As the image of the $T^\ell$-minimal system $Y_\ell$, the set
$Z_\ell$ is $R^\ell$-minimal. \cref{lem:power-components} shows that $Z_\ell$ is a clopen
$R^\ell$-minimal component of $Z_N$.

\begin{lemma}\label{lem:Vfull}
One has
\[
 \pi_N(V)=Z_\ell.
\]
\end{lemma}

\begin{proof}
The inclusion $\pi_N(V)\subset Z_\ell$ follows from $V\subset Y_\ell$.
For the reverse inclusion, take $z\in Z_\ell$ and choose $y\in Y_\ell$
with $\pi_N(y)=z$. If
$T^{ma}y\in C_\gamma$, then $y\in V$. Otherwise
$T^{ma}y\in C_{1-\gamma}$. Since $JY_\ell=Y_\ell$ and $J$ reverses the two
colors, $Jy\in V$. \cref{prop:Jtrivial} gives
$\pi_N(Jy)=\pi_N(y)=z$.
\end{proof}

Set $r_0=0$. We construct recursively a strictly increasing sequence
$0<r_1<r_2<\cdots$ and clopen sets
\[
 O_0\supset O_1\supset O_2\supset\cdots
\]
such that $O_j\cap Y_\ell\neq\emptyset$ and
\begin{equation}\label{eq:Oj}
 O_j=O_{j-1}\cap T^{-\ell mr_j-ma}C_\gamma.
\end{equation}
The inclusion $O_j\subset O_{j-1}$ records all cross conditions imposed
up to stage $j$, while $O_j\cap Y_\ell\neq\emptyset$ guarantees that
the next stage can be continued.
Suppose that $O_{j-1}$ has been constructed. Set
\begin{equation}\label{eq:Wj}
 W_j=\Delta_{m+\ell}^{-1}C_\gamma
 \cap\Delta_\ell^{-1}O_{j-1}
 \cap U_a.
\end{equation}
To see that $W_j\cap M_a\neq\emptyset$, choose
$y\in O_{j-1}\cap Y_\ell$. Since
$O_{j-1}\cap Y_\ell\subset O_0^{\rm rel}\subset\Delta_\ell(E)$,
there is $x\in E$ with $\Delta_\ell x=y$. Then
$x\in M_a\subset U_a$, $\Delta_{m+\ell}x\in C_\gamma$, and
$\Delta_\ell x\in O_{j-1}$, so $x\in W_j\cap M_a$.

Define
\begin{equation}\label{eq:Pj}
 P_j=\{r\in\N:T^{a+\ell r}\mathbf c\in W_j\}.
\end{equation}
Apply \cref{lem:ps-return} to the system
$(\mathcal X,T^\ell)$, the point $T^a\mathbf c$, the minimal subsystem
$M_a\subset\omega_{T^\ell}(T^a\mathbf c)$ from
\eqref{eq:phase-omega}, and the open set $W_j$. Since
$W_j\cap M_a\neq\emptyset$, the set $P_j$ is piecewise syndetic.
Membership in $P_j$ will enforce the new diagonal condition through the
first factor in \eqref{eq:Wj}; the other two factors keep the chosen
orbit in the correct phase and inside $O_{j-1}$.

Next put
\begin{equation}\label{eq:Hj}
 H_j=\left\{r\in\N:
 (O_{j-1}\cap Y_\ell)\cap T^{-\ell mr}V\neq\emptyset
 \right\}.
\end{equation}
Put
\[
 U_j=O_{j-1}\cap Y_\ell
 \qquad\text{and}\qquad
 q=\ell m.
\]
Since the $T^q$-minimal components form a finite clopen partition of
$N$, choose one, denoted by $C_j$, that meets the nonempty open set
$U_j$.
Since $Y_\ell$ is closed and $T^q$-invariant, the nonempty set
$C_j\cap Y_\ell$ is closed and $T^q$-invariant in the minimal system
$(C_j,T^q)$. Hence
\[
 C_j\subset Y_\ell.
\]
By \cref{lem:power-mef},
\[
 C_j=\pi_N^{-1}(\pi_N(C_j)),
\]
and $\pi_N|_{C_j}$ is the maximal equicontinuous factor of
$(C_j,T^q)$. Moreover, \cref{prop:Jtrivial} and the displayed saturation
show that $J(C_j)\subset C_j$; applying $J^2=\mathrm{id}$ gives
\[
 JC_j=C_j.
\]

We next verify that $V\cap C_j$ projects onto this entire factor.
Since $C_j\subset Y_\ell$, one has $\pi_N(C_j)\subset Z_\ell$.
By \cref{lem:Vfull}, for each $z\in\pi_N(C_j)$ there is $v\in V$ with
$\pi_N(v)=z$. The saturation of $C_j$ then forces $v\in C_j$, and hence
\[
 \pi_N(V\cap C_j)=\pi_N(C_j).
\]
Applying \cref{lem:full-fibre-returns} to the minimal system
$(C_j,T^q)$ and the sets $U_j\cap C_j$ and $V\cap C_j$ is legitimate:
they are relatively open in $C_j$, the first is nonempty by the choice of
$C_j$, and the second is nonempty by the preceding full-projection
identity. We obtain that
\[
 H_j'=\left\{r\in\N:
 (U_j\cap C_j)\cap T^{-qr}(V\cap C_j)\neq\emptyset
 \right\}
\]
is thickly syndetic. Since $H_j'\subset H_j$ and supersets of thickly
syndetic sets are thickly syndetic, $H_j$ is thickly syndetic. The tail
\[
 P_j\cap(r_{j-1},\infty)
\]
is still piecewise syndetic, because it differs from $P_j$ by a finite
set. Since every thickly syndetic subset of $\N$ meets every piecewise
syndetic subset of $\N$, choose
\[
 r_j\in H_j\cap P_j\cap(r_{j-1},\infty),
\]
and define $O_j$ by \eqref{eq:Oj}. Since $r_j\in H_j$, there is
$y\in O_{j-1}\cap Y_\ell$ with $T^{\ell mr_j}y\in V$. By the definition
of $V$, this means $T^{\ell mr_j+ma}y\in C_\gamma$, so
$y\in O_j\cap Y_\ell$. Thus $O_j\cap Y_\ell\neq\emptyset$, completing the recursive step.

Put
\[
 n_j=a+\ell r_j.
\]
The sequence $(n_j)$ is strictly increasing and consists of positive
integers. As $r_j\in P_j$, one has $T^{n_j}\mathbf c\in W_j$. Thus
\[
 \Delta_{m+\ell}T^{n_j}\mathbf c\in C_\gamma.
\]
Using \eqref{eq:delta-relations} and
$\Delta_{m+\ell}\mathbf c=\mathbf c$, the point on the left is
$T^{(m+\ell)n_j}\mathbf c$.  Its $(1,0)$-coordinate at $0$ is
$c((m+\ell)n_j)$, and therefore
\begin{equation}\label{eq:diag-final}
 c((m+\ell)n_j)=\gamma.
\end{equation}
Moreover, by \eqref{eq:Wj} and $T^{nj}\mathbf c\in W_j$, we get
\[
 \Delta_\ell T^{n_j}\mathbf c
 =T^{\ell n_j}\mathbf c\in O_{j-1}.
\]
If $i<j$, then $O_{j-1}\subset O_i$, and by \eqref{eq:Oj},
\[
 T^{\ell mr_i+ma}\Delta_\ell T^{n_j}\mathbf c\in C_\gamma.
\]
Again using $\Delta_\ell T^{n_j}=T^{\ell n_j}\Delta_\ell$ and
$\Delta_\ell\mathbf c=\mathbf c$, the $(1,0)$-coordinate of the point
on the left is $c(\ell mr_i+ma+\ell n_j)$.  Since
\[
 \ell mr_i+ma+\ell n_j
 =m(a+\ell r_i)+\ell n_j
 =mn_i+\ell n_j,
\]
we conclude that
\begin{equation}\label{eq:cross-final}
 c(mn_i+\ell n_j)=\gamma
 \qquad(i<j).
\end{equation}
Equations \eqref{eq:diag-final} and \eqref{eq:cross-final} contradict the
assumption that $c$ has no configuration of the form \eqref{eq:c-target}.
\end{proof}

\begin{proof}[Proof of \cref{main2}]
Let $d=\gcd(m,\ell)$, $m=dm_0$, and $\ell=d\ell_0$. Given
$b:\N\to\{0,1\}$, apply \cref{thm:reduced} to the coloring
$\widetilde b(n)=b(dn)$ and the coprime pair $(m_0,\ell_0)$. If
$a_1<a_2<\cdots$ is the resulting sequence and $\gamma$ is its common
color, then
\[
 \widetilde b((m_0+\ell_0)a_i)=b((m+\ell)a_i)=\gamma
\]
and
\[
 \widetilde b(m_0a_i+\ell_0a_j)=b(ma_i+\ell a_j)=\gamma
 \qquad(i<j).
\]
Thus the same sequence is a witness for $b$ with the original coefficients.
\end{proof}

\section{Counterexamples}\label{sec6}
The constructions in this section are adapted from \cite{Neil79,LianXiao24}.
We include complete proofs because the precise placement of the coefficients
and shifts will be used below.
\subsection{The three-color obstruction}
We first show that the two-color hypothesis in our results is essential.

\begin{prop}\label{prop6-1}
Let $m,\ell\in\N$. There is a $3$-coloring of $\N$ such that, for every infinite $B\subseteq\N$ and every $t\in\Z$, the set
\[
 \{(m+\ell)x:x\in B\}
 \cup
 \{mx+\ell y+t:x,y\in B,\ x<y\},
\]
whenever contained in $\N$, is not monochromatic.
\end{prop}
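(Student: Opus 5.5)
The plan is to use a logarithmic block coloring at a carefully chosen scale. Put $q=(m+\ell)/\ell>1$ and $\lambda=q^{1/2}>1$. Define the $3$-coloring
\[
 \chi(n)=\bigl\lfloor \log_\lambda n\bigr\rfloor \bmod 3\qquad(n\in\N).
\]
The point of this choice is an exact identity for every $y\in\N$:
\[
 \log_\lambda\bigl((m+\ell)y\bigr)=\log_\lambda(\ell y)+\log_\lambda q=\log_\lambda(\ell y)+2 .
\]
Hence, with $k_y=\lfloor\log_\lambda(\ell y)\rfloor$, we get $\lfloor\log_\lambda((m+\ell)y)\rfloor=k_y+2$. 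So the diagonal element $(m+\ell)y$ always has color $k_y+2 \bmod 3$.

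Next I compare this with the cross terms. Suppose, for contradiction, that $B=\{b_1<b_2<\cdots\}$ is infinite, $t\in\Z$, the displayed set lies in $\N$, and it is monochromatic of color $\gamma$. Fix $x=b_1$ and put $K=mb_1+t$, a fixed integer. For $j\ge2$ the cross term is $mb_1+\ell b_j+t=\ell b_j+K$.

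Since $b_j\to\infty$, for all large $j$ we have
\[
 \lambda^{-1}\ell b_j<\ell b_j+K<\lambda\,\ell b_j .
\]
Indeed, $|K|$ is fixed while $(\lambda-1)\ell b_j$ and $(1-\lambda^{-1})\ell b_j$ tend to infinity. Consequently $\lfloor\log_\lambda(\ell b_j+K)\rfloor\in\{k_{b_j}-1,\,k_{b_j},\,k_{b_j}+1\}$. If $K\ge0$, only $k_{b_j}$ or $k_{b_j}+1$ can occur. If $K<0$, only $k_{b_j}-1$ or $k_{b_j}$ can occur.

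I then compare residues mod $3$. Monochromaticity forces $\chi((m+\ell)b_j)=\chi(\ell b_j+K)=\gamma$, that is, $k_{b_j}+2\equiv \lfloor\log_\lambda(\ell b_j+K)\rfloor \pmod 3$.

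In the case $K\ge0$, the right side is $k_{b_j}$ or $k_{b_j}+1$, neither of which is congruent to $k_{b_j}+2$ mod $3$. This gives the contradiction.

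The case $K<0$ is the main obstacle. There $k_{b_j}-1\equiv k_{b_j}+2 \pmod 3$, so the crude bound does not suffice. To rule out the bad subcase I will use the diagonal elements themselves rather than a second cross term. If $\lfloor\log_\lambda(\ell b_j+K)\rfloor=k_{b_j}-1$, then $\ell b_j$ lies within $|K|$ of the block boundary $\lambda^{k_{b_j}}$.

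My first attempt to close this is to use the freedom in $x$: compare $x=b_1$ with $x=b_i$ for large $i$, where $K_i=mb_i+t>0$. For $i$ fixed and large, and $j\to\infty$, the previous paragraph applies with $K_i\ge0$ and gives the contradiction directly. Since $t$ is fixed and $b_i\to\infty$, some $i$ has $mb_i+t>0$, so I simply choose that $i$ in place of $b_1$.

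With this choice, only the harmless case $K\ge0$ ever arises, and the argument closes. I expect this to make the $K<0$ difficulty disappear entirely; the remaining verification is the routine inequality $\ell y+K<\lambda\ell y$ for large $y$.
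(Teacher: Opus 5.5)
Your proof is correct and is essentially the paper's argument: a logarithmic three-block coloring, with a fixed $x\in B$ and a large $y\in B$, using $(m+\ell)y/(mx+\ell y+t)\to q:=(m+\ell)/\ell$. The only difference is calibration. The paper's coloring equals $\lfloor\log_{q^{2/3}}n\rfloor \bmod 3$, so the ratio $q$ is a shift of $3/2$ blocks, strictly inside the window $(1,2)$ of shifts that force a color change; hence any fixed $a\in B$ works regardless of the sign of $ma+t$. Your base $q^{1/2}$ gives an exact diagonal shift of $2$ but places it at the edge of that window, which is exactly why you need $mx+t\ge 0$. In a write-up, choose such an $x$ at the outset and drop the abandoned $K<0$ discussion.
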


\begin{proof}
Put $\lambda=(m+\ell)/\ell$ and $w=\lambda^2$. Thus $w>1$ and $\lambda=w^{1/2}$. Define $\phi:(0,\infty)\to\{0,1,2\}$ by
\[
 \phi(z)=r
 \quad\Longleftrightarrow\quad
 \{\log_w z\}\in\left[\frac r3,\frac{r+1}{3}\right),
 \qquad r\in\{0,1,2\},
\]
where $\{u\}=u-\lfloor u\rfloor$. Restrict $\phi$ to $\N$.

Suppose that an infinite set $B\subseteq\N$, an integer $t$, and a color $\gamma$ make the configuration in the statement monochromatic. Fix $a\in B$. Since every infinite subset of $\N$ is unbounded, there are elements $b\in B$, with $b>a$, as large as desired. For such $b$, put
\[
 u_b=ma+\ell b+t,
 \qquad
 v_b=(m+\ell)b.
\]

Note that
\[
\lim_{b\in B\to\infty}\frac{v_b}{u_b}
=\lim_{b\to\infty}\frac{(m+\ell)b}{ma+\ell b+t}
=\frac{m+\ell}{\ell}
=\lambda=w^{1/2}.
\]
Thus, for all sufficiently large $b\in B$, we have
\[
 \delta_b:=\log_w(v_b)-\log_w(u_b)
 =\log_w\left(\frac{v_b}{u_b}\right)\in(1/3,2/3).
\]
Since $0<\delta_b<1$, either
$\lfloor\log_w(v_b)\rfloor=\lfloor\log_w(u_b)\rfloor$, in which case
\[
 \bigl|\{\log_w(v_b)\}-\{\log_w(u_b)\}\bigr|=\delta_b,
\]
or $\lfloor\log_w(v_b)\rfloor=\lfloor\log_w(u_b)\rfloor+1$, in which
case this absolute difference is $1-\delta_b$. In either case,
\[
 \bigl|\{\log_w(v_b)\}-\{\log_w(u_b)\}\bigr|\in(1/3,2/3).
\]
The two fractional parts therefore cannot belong to the same one of
the three intervals that define $\phi$. Hence
$\phi(u_b)\neq\phi(v_b)$, a contradiction. This completes the proof.
\end{proof}

\subsection{The asymmetric obstruction}\label{sec7-2}
We next give the asymmetric counterexample mentioned after
\cref{main2}.

\begin{prop}\label{prop6-2}
For any $\ell,m\in \N$ with $\ell\neq m$, there is a $2$-coloring of
$\N$ such that, for every infinite $B\subset\N$ and all
$t_1,t_2\in\mathbb Z$, the set
\[
\{mx+\ell y+t_1:x,y\in B,\ x<y\}
\cup
\{mx+\ell y+t_2:x,y\in B,\ x>y\},
\]
whenever contained in $\N$, is not monochromatic.
\end{prop}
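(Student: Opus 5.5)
The plan is to adapt the logarithmic coloring from the proof of \cref{prop6-1}, this time with two colors. By symmetry we may assume $m>\ell$; otherwise we swap the roles of the two families. Put $\lambda=m/\ell>1$ and $w=\lambda^{2}$, so that $\log_w\lambda=\tfrac12$. Color $z\in\N$ by $\phi(z)=0$ if $\{\log_w z\}\in[0,\tfrac12)$ and $\phi(z)=1$ otherwise.

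Suppose an infinite $B$, integers $t_1,t_2$ and a color $\gamma$ make the set monochromatic. For $a<b$ in $B$ consider the pair
\[
 u_{a,b}=ma+\ell b+t_1 \quad(x=a<y=b),
 \qquad
 v_{a,b}=mb+\ell a+t_2 \quad(x=b>y=a).
\]
Both have color $\gamma$. For fixed $a$, $v_{a,b}/u_{a,b}\to\lambda$ as $b\to\infty$, so $\log_w v_{a,b}-\log_w u_{a,b}\to\tfrac12$. As in \cref{prop6-1}, the two fractional parts then differ by nearly $\tfrac12$. Since the color classes are half-open intervals of length $\tfrac12$, two such points can share a color only if $u$ sits just to the right of a boundary point $\beta\in\{0,\tfrac12\}$ and $v$ sits just to the left of $\beta+\tfrac12$. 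Quantitatively, write
\[
 \log_w v_{a,b}-\log_w u_{a,b}=\tfrac12-\delta_{a,b},
 \qquad
 \{\log_w u_{a,b}\}=\beta+\varepsilon_{a,b}.
\]
The requirement is then $0\le\varepsilon_{a,b}<\delta_{a,b}$ for all large $b$. The first main step is the exact sign computation
\[
 v/u<\lambda \iff \ell^2a+\ell t_2<m^2a+mt_1 ,
\]
together with the first-order expansion
\[
 \delta_{a,b}=\frac{(m^2-\ell^2)a+O(1)}{m\ell\, b\ln w}+O(b^{-2}).
\]
So $\delta_{a,b}>0$ for large $a$, and it is of size $\asymp a/b$.

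Next I fix a large $b$ and let $a$ vary among a few elements of $B$ below $b$. Choose $\beta$ and the integer $k$ (depending on $b$) so that $\beta+k$ is the boundary point near $\log_w(\ell b)$. Expanding around $\ell b$ gives
\[
 \varepsilon_{a,b}=e_b+\frac{ma+O(1)}{\ell b\ln w}+O(b^{-2}),
 \qquad
 e_b:=\log_w(\ell b)-\beta-k,
\]
where $e_b$ does not depend on $a$. Since $\varepsilon_{a,b}\to0$, one boundary point $\beta$ serves for all the finitely many chosen $a$ at once. Plugging into $0\le\varepsilon_{a,b}<\delta_{a,b}$ and using $\frac{m^2-\ell^2}{m\ell}-\frac m\ell=-\frac{\ell}{m}$, the condition becomes
\[
 b\ln w\cdot e_b\in\Bigl[-\tfrac m\ell a,\,-\tfrac\ell m a\Bigr]+O(1)+O(b^{-1}).
\]

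Now pick $a_1<a_2$ in $B$, both large compared with $|t_1|,|t_2|$, with $a_2>(m/\ell)^2a_1$ plus a margin absorbing the $O(1)$ terms. Then the intervals $[-\tfrac m\ell a_1,-\tfrac\ell m a_1]$ and $[-\tfrac m\ell a_2,-\tfrac\ell m a_2]$, enlarged by $O(1)$, are disjoint. Since $b\ln w\cdot e_b$ is one number, it cannot lie in both, and so for all sufficiently large $b\in B$ one of the four points $u_{a_i,b},v_{a_i,b}$ has a different color. This is the desired contradiction.

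The main obstacle is precisely this boundary case. The leading-order argument of \cref{prop6-1} gives nothing with only two colors, because a gap tending to exactly $\tfrac12$ is compatible with one color. All the content therefore lies in the second-order expansion, and I expect the care to go into three places: that the $O(1)$ contributions of $t_1,t_2$ and the $O(b^{-2})$ errors are dominated once $a_1,a_2$ are large; that the case $\beta=\tfrac12$ is handled identically; and that the sign convention for $\delta$ is right when $\lfloor\log_w v\rfloor$ jumps.
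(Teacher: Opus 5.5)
Your argument is correct and is essentially the paper's proof. The coloring is the same: with $r=m/\ell$ and $w=r^2$, your classes are $[r^{2n},r^{2n+1})$ and $[r^{2n+1},r^{2n+2})$. Your sign condition $\ell a+t_2<r(ma+t_1)$ is the paper's choice of $n_1$, and your disjointness requirement, written exactly, is $\ell a_2+t_2>r(ma_1+t_1)$, which is the paper's choice of $n_2$. The paper simply uses exact inequalities instead of your second-order expansions. Since $v_{a_2,b}/v_{a_1,b}$ and $v_{a_1,b}/u_{a_1,b}$ lie in $(1,r)$, the three same-colored points sit in one interval $[r^d,r^{d+1})$, so $v_{a_2,b}<r^{d+1}\le r\,u_{a_1,b}$; after cancelling $mb$ this contradicts the choice of $a_2$, and your fourth point $u_{a_2,b}$ is not needed.
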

\begin{proof}
Exchanging $m$ and $\ell$ and simultaneously interchanging the two ordered
pieces if necessary, it suffices to consider $\ell<m$. Put
$r=m/\ell>1$ and color $(0,\infty)$ as follows:
\begin{itemize}
	\item color $1$: $[r^{2n},r^{2n+1})$ for all $n\in\Nzero$;
	\item color $2$: $[r^{2n+1},r^{2(n+1)})$ for all $n\in\Nzero$;
	\item color $1$: otherwise (that is, $(0,1)$).
\end{itemize}
Restrict this coloring to $\N$ and denote it by $\phi$.

\medskip
Assume that there exist an infinite $B\subseteq \N$ and
$t_1,t_2\in\mathbb Z$ such that $\phi$ takes a constant value
$c\in\{1,2\}$ on
\[
\{mx+\ell y+t_1:x,y\in B,x<y\}
\cup\{\ell x+my+t_2:x,y\in B,x<y\}.
\]
Here the second family is the $x>y$ family in the statement after the two
variables have been interchanged.

We choose $n_1,n_2,k$ successively. Since
\[
 rm-\ell=\frac{m^2-\ell^2}{\ell}>0,
\]
we may first choose $n_1\in B$ so large that $mn_1+t_1>0$ and
\begin{equation}\label{eq:asym-n1}
 \ell n_1+t_2<r(mn_1+t_1).
\end{equation}
Next choose $n_2\in B$, $n_2>n_1$, so large that
\begin{equation}\label{eq:asym-n2}
 r(mn_1+t_1)<\ell n_2+t_2.
\end{equation}
Finally, choose $k\in B$, $k>n_2$, sufficiently large. For this choice
the following conditions hold:
\begin{itemize}
\item[(1)] $\ell n_2+mk+t_2,\ell n_1+mk+t_2,\ell k+mn_1+t_1>1$.
\item[(2)] $r(mn_{1}+t_1)<\ell n_2+t_2$.
\item[(3)] $1<(\ell n_2+mk+t_2)/(\ell n_1+mk+t_2)<r,1<(\ell n_1+mk+t_2)/(\ell k+mn_1+t_1)<r$.
\end{itemize}
Indeed, (1) holds for all sufficiently large $k$, while (2) is
\eqref{eq:asym-n2}. Moreover,
\[
 \frac{\ell n_2+mk+t_2}{\ell n_1+mk+t_2}\longrightarrow1
 \qquad\text{as }k\to\infty.
\]
The quotient is greater than $1$ because $n_2>n_1$, and hence it is
less than $r$ for all sufficiently large $k$. Similarly,
\[
 \frac{\ell n_1+mk+t_2}{\ell k+mn_1+t_1}\longrightarrow
 \frac m\ell=r.
\]
The quotient is less than $r$ by \eqref{eq:asym-n1}, and it is greater
than $1$ for all sufficiently large $k$, since $m>\ell$. This proves
(3), and the required $k$ exists because $B$ is unbounded.
Next, we introduce a claim.
\begin{cl}
	If $x,y>1$, $1<x/y<r$, and $\phi(x)=\phi(y)$, then $x,y\in [r^d,r^{d+1})$ for some $d\in \Nzero$.
\end{cl}
\begin{proof}
	Suppose not. At this point, we take $d\in \Nzero$ such that $x\in [r^d,r^{d+1})$ and $y\notin [r^d,r^{d+1})$. By $1<x/y<r$, $r^{d-1}\le y<r^{d}$. This contradicts $\phi(x)=\phi(y)$. Therefore, $x,y\in [r^d,r^{d+1})$ for some $d\in \Nzero$.
\end{proof}
Now, assume that $(\ell n_1+mk+t_2)\in [r^d,r^{d+1})$ for some $d\in \Nzero$. By the above claim, $(\ell n_2+mk+t_2),(\ell k+mn_1+t_1)\in [r^d,r^{d+1})$. Then $r^{d+1}>(\ell n_2+mk+t_2)$ and $(\ell k+mn_1+t_1)r\ge r^{d+1}$. This contradicts (2). This completes the proof.
\end{proof}
\begin{rem}
	When $\ell=1$, this is Hindman's coloring from
	\cite[proof of Theorem~2.11]{Neil79}; the construction above extends it
	to general $\ell$.
\end{rem}

\subsection{The three-fold version of Owings's question}\label{sec6-3}
If every $2$-coloring of $\N$ admitted an infinite $B\subseteq\N$ with
monochromatic $B+B+B$, then
\[
\{2x+y:x,y\in B,x<y\}\cup\{x+2y:x,y\in B,x<y\}
\]
would be monochromatic, since both displayed families are subsets of
$B+B+B$. Apply \cref{prop6-2} with $(m,\ell)=(2,1)$ and
$t_1=t_2=0$. The coloring constructed there admits no such configuration.
Consequently, it has no infinite $B$ for which $B+B+B$ is monochromatic.

\appendix

\section{Proof of the admissible weighted theorem}\label{appA}

In this appendix we give a proof of \cref{thm2-1}. The idea of the proof is similar to the proof of \cite[Corollary~2.10]{Neil79}.

Fix $m,\ell\in\N$. For $K,M\in\N$, write
\[
Q(K,M):=\{(m+\ell)(K+s):0\leq s<M\}.
\]

\begin{lemma}\label{lem:local}
Let $c:\N\to\{0,1\}$, fix $j\in\{0,1\}$, and put $j'=1-j$. Suppose that there is no infinite set $B\subseteq\N$ such that
\[
c((m+\ell)x)=c(mx+\ell y)=j\qquad(x,y\in B,\ x<y).
\]
Then there is $C_j\in\N$ with the following property. If $K,M,x\in\N$ satisfy $|Q(K,M)\cap c^{-1}(j')|\leq1$, $x>C_j$, and
\[
(m+\ell)K<\ell x<(m+\ell)(K+M)-C_j,
\]
then $c((m+\ell)x)=j'$.
\end{lemma}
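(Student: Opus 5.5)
The plan is to argue by contradiction and build an infinite $B$ out of violations of the conclusion. The key arithmetic observation is this. If $x<y$ and $y\equiv x \pmod{m+\ell}$, write $y=x+(m+\ell)t$. Then
\[
mx+\ell y=(m+\ell)(x+\ell t),
\]
so $mx+\ell y$ is a multiple of $m+\ell$. Moreover, if $x$ is fixed and $y$ is very large, then $mx+\ell y=(m+\ell)w$ with $w=\bigl(\ell y+mx\bigr)/(m+\ell)$, so that $(m+\ell)w=\ell y+mx$ exceeds $\ell y$ by only the bounded amount $mx$. Hence the cross sums $mx_i+\ell y$ land just to the right of $\ell y$ among the multiples of $m+\ell$.

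Suppose no constant $C_j$ works. Then for every $C\in\N$ there are $K,M,x$ with $|Q(K,M)\cap c^{-1}(j')|\le1$, $x>C$,
\[
(m+\ell)K<\ell x<(m+\ell)(K+M)-C,
\]
and $c((m+\ell)x)=j$. Call such an $(x,K,M)$ a violation at scale $C$. The first step is to pass, by pigeonhole, to infinitely many violations whose points $x$ all lie in one fixed residue class modulo $m+\ell$. These violations can still be taken at arbitrarily large scales, since a violation at scale $C$ is also one at every smaller scale.

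Next I choose $x_1<x_2<\cdots$ recursively. I take $x_n$ from a violation $(x_n,K_n,M_n)$ at a scale $C_n>m\,x_{n-1}+(m+\ell)$ with $x_n>x_{n-1}$. For each $i<n$, put $w=(mx_i+\ell x_n)/(m+\ell)$; this is an integer by the congruence. The inequalities give
\[
(m+\ell)K_n<\ell x_n<mx_i+\ell x_n<\ell x_n+C_n<(m+\ell)(K_n+M_n),
\]
so $mx_i+\ell x_n\in Q(K_n,M_n)$. Since also $c((m+\ell)x_n)=j$, the constraints with $x_n$ as the larger element hold, except possibly for the unique exceptional point of $Q(K_n,M_n)$. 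The values $mx_i+\ell x_n$ are distinct for distinct $i$, so that exceptional point spoils at most one earlier index $i(n)<n$.

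The remaining step, which I expect to be the only real obstacle, is to remove these bad pairs. Define a graph on $\N$ by joining $n$ to $i(n)$ whenever $i(n)$ exists. Every vertex has at most one edge to a smaller vertex, so there is no triangle: in a triangle $a<b<c$, the vertex $c$ would be joined to both $a$ and $b$. By Ramsey's theorem for pairs, there is an infinite set $S$ of indices that is either a clique or independent. A clique is impossible, so $S$ is independent.

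Set $B=\{x_n:n\in S\}$. Then $c((m+\ell)x)=j$ for all $x\in B$, and $c(mx+\ell y)=j$ for all $x<y$ in $B$. This contradicts the hypothesis, which proves the existence of $C_j$.
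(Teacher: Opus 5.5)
Your proof is correct and follows essentially the same route as the paper. It uses violations at scales exceeding $m x_{n-1}$, a common residue class modulo $m+\ell$ so that every cross term $mx_i+\ell x_n$ lands in $Q(K_n,M_n)$, and Ramsey's theorem for pairs. Your ``bad edges'' are exactly the pairs the paper colors $j'$, and your triangle-freeness is the paper's observation that two cross terms $mx_{r_1}+\ell x_s$ and $mx_{r_2}+\ell x_s$ of color $j'$ would violate $|Q(K_s,M_s)\cap c^{-1}(j')|\le 1$. The only cosmetic difference is that you fix the residue class before the recursion, which your monotonicity-in-scale remark justifies, whereas the paper passes to a subsequence afterwards.
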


\begin{proof}
Suppose otherwise. For every $C\in\N$, choose $K,M,x\in\N$ satisfying the three conditions with $C$ in place of $C_j$, but with $c((m+\ell)x)=j$.

We recursively construct triples $(K_r,M_r,x_r)$. Having chosen $x_1<\cdots<x_r$, take $C>\max_{t\leq r}(m+\ell)x_t$ and choose $(K_{r+1},M_{r+1},x_{r+1})$ as above. Then $x_{r+1}>C>x_r$, and for every $t\leq r$,
\[
(m+\ell)K_{r+1}<mx_t+\ell x_{r+1}<(m+\ell)(K_{r+1}+M_{r+1}).
\]
Indeed, the lower bound follows from $\ell x_{r+1}>(m+\ell)K_{r+1}$, while the upper bound follows from $mx_t<(m+\ell)x_t<C$.

Pass to a subsequence on which all $x_r$ are congruent modulo $m+\ell$. For $t<r$, the integer $mx_t+\ell x_r$ is divisible by $m+\ell$ and lies in the preceding open interval; hence it belongs to $Q(K_r,M_r)$. Apply the infinite Ramsey theorem \cite[Chapter~1, Theorem~5]{GrahamRothchildSpencer90} to the coloring of pairs $t<r$ by $c(mx_t+\ell x_r)$, and pass to a further subsequence on which all cross terms have one color. This color cannot be $j$, since every diagonal term $c((m+\ell)x_r)$ equals $j$. Thus all cross terms have color $j'$. If $r_1<r_2<s$, the two distinct numbers $mx_{r_1}+\ell x_s$ and $mx_{r_2}+\ell x_s$ lie in $Q(K_s,M_s)\cap c^{-1}(j')$, contradicting its cardinality bound. This completes the proof.
\end{proof}

\begin{prop}\label{prop:thick}
Let $c:\N\to\{0,1\}$. If one color class is thick, then there is an infinite set $B\subseteq\N$ such that
\[
(m+\ell)B\cup\{mx+\ell y:x,y\in B,\ x<y\}
\]
is monochromatic.
\end{prop}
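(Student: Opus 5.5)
The plan is to argue by contradiction via \cref{lem:local}, applying it once for each color. Suppose no infinite $B$ exists. Then the hypothesis of \cref{lem:local} holds for both $j=0$ and $j=1$, and we obtain constants $C_0,C_1$; put $C=\max(C_0,C_1)$ and $\lambda=(m+\ell)/\ell>1$. Let $a$ be the color whose class is thick. Any interval of integers of length at least $(m+\ell)M$ lying in $c^{-1}(a)$ contains a full block $Q(K,M)\subseteq c^{-1}(a)$, with zero exceptions. Moreover $K$ can be taken as large as we like, since thick sets contain long intervals arbitrarily far out.

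\emph{Step 1 (color flipping under dilation by $\lambda$).} Apply \cref{lem:local} with $j=a$ to such a block. Every integer $x>C$ with $\lambda K< x<\lambda(K+M)-C/\ell$ then satisfies $c((m+\ell)x)=1-a$. So $c^{-1}(1-a)$ contains a block $Q(K_1,M_1)$ with $K_1\approx\lambda K$ and $M_1\geq\lambda M-C'$, where $C'$ depends only on $C,m,\ell$. Applying \cref{lem:local} again with $j=1-a$ to this new block returns a block of color $a$ at $K_2\approx\lambda^2K$ of length $M_2\geq\lambda^2M-C''$. Iterating, as long as $M$ is large compared with $C$, we obtain blocks $Q(K_k,M_k)$ whose colors alternate with $k$. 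Their lengths grow geometrically and the ratio $M_k/K_k$ stays essentially $M/K$. The \emph{one-exception} clause of \cref{lem:local} will be used so that a block may be extended by one term past a boundary. Concretely, if $Q(K,M)\subseteq c^{-1}(a)$ is a maximal run, then $Q(K-1,M+1)$ still qualifies, and the conclusion extends accordingly.

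\emph{Step 2 (forcing an overlap).} A contradiction arises as soon as a block of color $a$ and its flipped image of color $1-a$ share a common $x$. This happens when $\lambda K<K+M-O(C)$, i.e. when $M/K$ exceeds $\lambda-1=m/\ell$ by a margin. More generally, it happens when two blocks produced at different stages of Step 1 overlap while carrying opposite colors. The plan is to obtain such a configuration by combining the thick class with the propagated blocks. Take a maximal run of color $a$ among the multiples of $m+\ell$ and its $\lambda$-images. Then show that the complementary gaps between successive runs are themselves covered by flipped images of earlier runs, so that runs cannot be separated by gaps. Using the one-exception clause at run endpoints, a run of color $a$ should then be pushed to merge with the image of a run of color $1-a$. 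Once a single $x>C$ lies in both a color-$a$ block and the $\lambda$-image of a color-$a$ block, \cref{lem:local} assigns $c((m+\ell)x)$ both values, which is the desired contradiction.

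\emph{Main obstacle.} Step 2 is the crux. Thickness gives arbitrarily long runs but no control of their length relative to their position, and dilation by $\lambda$ preserves the ratio $M/K$, so naive iteration never produces overlap. I would first try to exploit maximal runs together with the one-exception clause. The aim is to show that the boundary point of a maximal run, after dilation, lands inside a run of the wrong color, in the spirit of Hindman's argument for \cite[Corollary~2.10]{Neil79}. If that fails, the fallback is to track the color of $(m+\ell)x$ on the whole geometric range $[K,\lambda^{k}K]$ and show that the alternating blocks eventually cover a full interval $[y,\lambda y]$. Such an interval is mapped into itself with its color flipped, which is impossible.
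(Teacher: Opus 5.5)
\textbf{This is a genuine gap.} Your Step 1 is sound. Step 2, however, is not just unfinished: it cannot be finished with the tools you allow yourself. Everything you use there is the conclusion of \cref{lem:local} for both colors (one-exception clause included) plus thickness, and these are mutually consistent. To see this, put $f(n)=c((m+\ell)n)$ and $\lambda=(m+\ell)/\ell$. Fix a large integer $a_0$, define $a_{k+1}=\lfloor\lambda(a_k-1)\rfloor+1$, and let $f=k \bmod 2$ on $[a_k,a_{k+1})$ and $f=1$ on $[1,a_0)$. Take $C_0=C_1=C:=(m+\ell)(a_0+1)$.

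Suppose $Q(K,M)$ has at most one point of color $j'$, and some integer $x$ satisfies $\lambda K<x<\lambda(K+M)-C/\ell$. Then $M\geq 3$. Hence the block lies in $[a_k-1,a_{k+1})$ or in $[a_k,a_{k+1}]$ for some interval $[a_k,a_{k+1})$ of color $j$; the initial segment $[1,a_0)$ gives an empty range by the choice of $C$. Consequently
\[
a_{k+1}=\lfloor\lambda(a_k-1)\rfloor+1\le x<\lambda(a_{k+1}+1)-2\lambda=\lambda(a_{k+1}-1)<a_{k+2},
\]
so $f(x)=j'$. Thus \cref{lem:local}'s conclusion holds for both colors, yet both color classes are thick. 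In this example the blocks have $M\approx(\lambda-1)K-\lambda$, just below your overlap threshold, and the alternating dilated blocks tile $\N$ without conflict. Your fallback also misreads the geometry: dilation sends $[y,\lambda y]$ to $[\lambda y,\lambda^2 y]$, not into itself.

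\textbf{The missing idea} is a second, direct use of the no-configuration hypothesis that runs backwards against the dilation. The paper proceeds as follows.
\begin{enumerate}
\item Fix $L$ with $mL>C_1$, and call $a$ good if $Q(a,L)$ has at most one point of color $0$.
\item Let $a_n$ be the largest good integer below the next thick block $Q(K_{n+1},M_{n+1})\subseteq c^{-1}(0)$. One exists: the point $X_n\approx\lambda K_n$ obtained by dilating the previous thick block.
\item \Cref{lem:local} turns $Q(a_n,L)$ into color-$0$ blocks $Q(z_n,L)$ with $z_n\approx\lambda a_n$.
\item Apply Ramsey's theorem to the $L$-tuples $\bigl(c(m(z_r+s)+\ell(z_t+s))\bigr)_{0\le s<L}$. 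Either some coordinate gives the configuration in color $0$ outright, since the diagonal terms already have color $0$, or all these cross terms have color $1$.
\item In the latter case, $\alpha_n=(mz_*+\ell z_n)/(m+\ell)$, which is essentially $z_n/\lambda$ plus a fixed shift, is good and satisfies $a_n<\alpha_n<K_{n+1}+D$ for a fixed $D$.
\item Maximality of $a_n$ forces $\alpha_n\ge K_{n+1}$. Once $M_{n+1}>D+L$, the color-$1$ block $Q(\alpha_n,L)$ then sits inside the color-$0$ block $Q(K_{n+1},M_{n+1})$, a contradiction.
\end{enumerate}
The cross-term contraction $z\mapsto(mz_*+\ell z)/(m+\ell)$ and the extremal choice of $a_n$ break the scale invariance that, as you correctly observed, forward dilation alone never breaks.
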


\begin{proof}
After interchanging the colors, assume that $c^{-1}(0)$ is thick. Suppose that neither color contains the required configuration. Apply \cref{lem:local} to the two target colors, and denote the resulting constants by $C_0$ and $C_1$.

Choose $L\in\N$ with $mL>C_1$. Put $P(a):=Q(a,L)$, and call $a$ good if $|P(a)\cap c^{-1}(0)|\leq1$. Every sufficiently long integer interval contains as many consecutive multiples of $m+\ell$ as prescribed. Using thickness, and choosing each new interval beyond the preceding ones, choose blocks $Q_n:=Q(K_n,M_n)\subseteq c^{-1}(0)$ such that $M_n\to\infty$ and
\[
K_{n+1}>L+2+\frac{(m+\ell)K_n}{\ell}.
\]

Let $X_n$ be the least positive integer for which $\ell X_n>(m+\ell)K_n$. Then
\[
(m+\ell)K_n<\ell X_n\leq(m+\ell)K_n+\ell.
\]
For all sufficiently large $n$ and every $0\leq s<L$, we therefore have $X_n+s>C_0$ and
\[
(m+\ell)K_n<\ell(X_n+s)\leq(m+\ell)K_n+\ell L<(m+\ell)(K_n+M_n)-C_0.
\]
Since $Q_n$ contains no point of color $1$, \cref{lem:local}, with target color $0$, gives $c((m+\ell)(X_n+s))=1$ for $0\leq s<L$. Thus $X_n$ is good. Moreover, $X_n\leq (m+\ell)K_n/\ell+1<K_{n+1}$.

Discard finitely many terms. For each $n$, let $a_n$ be the largest good integer below $K_{n+1}$. Since $X_{n+1}>K_{n+1}$, we have
\[
a_n<K_{n+1}<X_{n+1}\leq a_{n+1},
\]
so $(a_n)$ is strictly increasing.

Let $z_n$ be the least positive integer for which $\ell z_n>(m+\ell)a_n$. Then $\ell z_n\leq(m+\ell)a_n+\ell$. For all sufficiently large $n$ and every $0\leq s<L$,
\[
(m+\ell)a_n<\ell(z_n+s)\leq(m+\ell)a_n+\ell L<(m+\ell)(a_n+L)-C_1;
\]
the last inequality is precisely $mL>C_1$. Since $a_n$ is good, \cref{lem:local}, with target color $1$, gives
\[
c((m+\ell)(z_n+s))=0\qquad(0\leq s<L).
\]
After discarding finitely many further terms, assume that this holds for every $n$. The sequence $(z_n)$ is strictly increasing, because
\[
\ell z_{n+1}>(m+\ell)a_{n+1}\geq(m+\ell)(a_n+1)>(m+\ell)a_n+\ell\geq\ell z_n.
\]

Pass to an infinite set of indices on which the $z_n$ are congruent modulo $m+\ell$. Color each pair $r<t$ by the $L$-tuple
\[
\bigl(c(m(z_r+s)+\ell(z_t+s))\bigr)_{0\leq s<L}.
\]
The infinite Ramsey theorem \cite[Chapter~1, Theorem~5]{GrahamRothchildSpencer90} gives an infinite index set $I$ on which this tuple is constant. If one coordinate of the constant tuple were $0$, say the coordinate $s$, then $\{z_n+s:n\in I\}$ would give the required configuration in color $0$. Hence
\[
c(m(z_r+s)+\ell(z_t+s))=1
\quad(0\leq s<L,\ r<t,\ r,t\in I).
\]

Let $n_0=\min I$ and put $z_*=z_{n_0}$. For $n\in I$ with $n>n_0$, define
\[
\alpha_n:=\frac{mz_*+\ell z_n}{m+\ell}.
\]
This is an integer because $z_n\equiv z_*\pmod{m+\ell}$. For $0\leq s<L$,
\[
(m+\ell)(\alpha_n+s)=m(z_*+s)+\ell(z_n+s),
\]
so $P(\alpha_n)$ is contained in $c^{-1}(1)$; in particular, $\alpha_n$ is good.

Since $\ell z_n>(m+\ell)a_n$, we have $\alpha_n>a_n$. The maximality of $a_n$ among the good integers below $K_{n+1}$ therefore gives $\alpha_n\geq K_{n+1}$. On the other hand,
\[
\alpha_n\leq a_n+\frac{mz_*+\ell}{m+\ell}<K_{n+1}+D
\]
for some fixed integer $D$. Choose $n\in I$ so large that $M_{n+1}>D+L$. Then $K_{n+1}\leq\alpha_n$ and $\alpha_n+L-1<K_{n+1}+M_{n+1}$, so $P(\alpha_n)\subseteq Q_{n+1}$. This is impossible, since $P(\alpha_n)$ has color $1$ whereas $Q_{n+1}$ has color $0$. This completes the proof.
\end{proof}

Now we can give a proof for \cref{thm2-1}.

\begin{proof}[Proof of \cref{thm2-1}]
Let $c:\N\to\{0,1\}$ be $(m,\ell)$-admissible. Thus there are a color $i$ and $d\in(m+\ell)\N$ such that, for every $R\in\N$, one can find $x_R\in\N$ with
\[
c((m+\ell)x_R+kd)=i\qquad(0\leq k\leq R).
\]
Write $d=(m+\ell)q$. Some residue class $\rho$ modulo $q$ contains $x_R$ for an unbounded set of values of $R$. For these values write $x_R=\rho+qu_R$, where $u_R\in\N_0$, and define
\[
\widetilde c(n):=c((m+\ell)(\rho+qn)),\qquad n\in\N.
\]
Then $\widetilde c(u_R+k)=i$ for $0\leq k\leq R$, apart from the unavailable term $k=0$ when $u_R=0$. It follows that $\widetilde c^{-1}(i)$ is thick.

Apply \cref{prop:thick} to $\widetilde c$. There are a color $\gamma$ and a strictly increasing sequence $k_1<k_2<\cdots$ such that $\widetilde c((m+\ell)k_n)=\gamma$ for every $n$, and $\widetilde c(mk_r+\ell k_t)=\gamma$ whenever $r<t$. Put
\[
b_n:=\rho+q(m+\ell)k_n.
\]
Then $(b_n)$ is strictly increasing and
\[
c((m+\ell)b_n)=\widetilde c((m+\ell)k_n)=\gamma.
\]
For $r<t$,
\[
mb_r+\ell b_t=(m+\ell)\bigl(\rho+q(mk_r+\ell k_t)\bigr),
\]
and hence $c(mb_r+\ell b_t)=\widetilde c(mk_r+\ell k_t)=\gamma$. Thus $B=\{b_n:n\in\N\}$ has the required property. This completes the proof.
\end{proof}

\end{document}